\newcommand{\arxiv}[2][]{\ifthenelse{\equal{#1}{}}
{\href{http://arxiv.org/abs/#2}{\tt arXiv:#2}}
{\href{http://arxiv.org/abs/math/#2}{\tt arXiv:math.#1/#2}}}
\renewcommand\subsection{\@startsection
{subsection}{2}{0cm} 
{-\baselineskip}     
{0.5\baselineskip}   
{\sffamily}} 
\theoremstyle{plain}
\newtheorem{theorem}{Theorem}[section]
\newtheorem{lemma}[theorem]{Lemma}
\newtheorem{corollary}[theorem]{Corollary}
\newtheorem{proposition}[theorem]{Proposition}
\newtheorem{problem}[theorem]{Problem}
\newtheorem{conjecture}[theorem]{Conjecture}
\newtheoremstyle{concise}
{}{}{}{}{\bfseries}{}{ }{\thmnumber{#2.}\thmnote{ #3.}}
\theoremstyle{concise}
\newtheorem{definition}[theorem]{}
\newtheoremstyle{remark}
{}{}{}{}{\itshape}{}{ }{\thmname{#1}\thmnumber{ \itshape #2.}}
\theoremstyle{remark}
\newtheorem{remark}[theorem]{Remark}
\def\Z{\mathbb{Z}}
\def\N{\mathbb{N}}
\def\but{\setminus}
\def\x{\times}
\def\eps{\varepsilon}
\def\phi{\varphi}
\def\i{\subset}
\def\emb{\hookrightarrow}
\def\imp{$\Rightarrow$}
\def\xr#1{\xrightarrow{#1}}
\def\dirlim{\lim\limits_\rightarrow{}}
\def\invlim{\lim\limits_\leftarrow{}}
\def\derlim{\lim\limits_\leftarrow{^1}{}}
\def\:{\colon}
\def\Cel{\lfloor} \def\Cer{\rfloor}
\def\Fll{\lceil} \def\Flr{\rceil}
\def\downscale#1{\mathchoice{\raisebox{1pt}{$\scriptstyle#1$}}
{\raisebox{1pt}{$\scriptstyle#1$}}{\raisebox{.5pt}{$\scriptscriptstyle#1$}}
{{\scriptscriptstyle#1}}}
\def\cel{\downscale\Cel} \def\cer{\downscale\Cer}
\def\fll{\downscale\Fll} \def\flr{\downscale\Flr}
\DeclareMathOperator{\st}{st}
\DeclareMathOperator{\lk}{lk}
\DeclareMathOperator{\Cl}{Cl}
\DeclareMathOperator{\Int}{Int}
\DeclareMathOperator{\Tel}{Tel}
\DeclareMathOperator{\Hom}{Hom}
\DeclareMathOperator{\im}{im}
\DeclareMathOperator{\rk}{rk}
\DeclareMathOperator{\id}{id}
\begin{document}
\title[Absolute retracts in products of dendrites]
{Contractible polyhedra in products of trees and absolute retracts
in products of dendrites}
\author{Sergey A. Melikhov and Justyna Zaj\c ac}
\address{Steklov Mathematical Institute of the Russian Academy
of Sciences, ul. Gubkina 8, Moscow 119991, Russia}
\address{Delaunay Laboratory of Discrete and Computational Geometry,
Yaroslavl' State University, 14 Sovetskaya st., Yaroslavl', 150000 Russia}
\email{melikhov@mi.ras.ru}
\address{Faculty of Mathematics, Informatics, and Mechanics,
University of Warsaw,
Banacha 2,
02-097 Warszawa,
Poland}
\email{jzajac@mimuw.edu.pl}
\thanks{The first author is supported by
Russian Foundation for Basic Research Grant No.\ 11-01-00822,
Russian Academy of Sciences program ``Mathematical methods of
construction and analysis of models of complex systems'',
Russian Government project 11.G34.31.0053 and
Federal Program ``Scientific and scientific-pedagogical staff of
innovative Russia 2009--2013''}

\begin{abstract}
We show that a compact $n$-polyhedron PL embeds in a product of $n$ trees if and only if it
collapses onto an $(n-1)$-polyhedron.
If the $n$-polyhedron is contractible and $n\ne 3$ (or $n=3$ and the Andrews--Curtis Conjecture holds), the product of trees may be assumed to collapse onto the image of the embedding.

In contrast, there exists a $2$-dimensional compact absolute retract $X$ such that $X\x I^k$
does not embed in any product of $2+k$ dendrites for each $k$.
\end{abstract}

\maketitle
\section{Introduction}

All {\it spaces} shall be assumed to be metrizable.
By a {\it compactum} we mean a compact metrizable space.
A finite-dimensional compactum is an {\it ANR} if and only if it is locally
contractible; and an {\it absolute retract (AR)} if and only if it is a contractible ANR
(see \cite{Bo1}).
A one-dimensional compact AR is called a {\it dendrite}, and a
one-dimensional compact ANR is called a {\it local dendrite}.
An arbitrary connected one-dimensional compactum is sometimes called a {\it curve}.

\begin{theorem}[Nagata--Bowers \cite{Na}, \cite{Bow}; see also
\cite{St1}, \cite{St2}, \cite{BV}]
\label{thm:bowers}
Every $n$-dimensional compactum $X$ embeds in $D^n\x I$, where $D$ is a certain
dendrite.
\end{theorem}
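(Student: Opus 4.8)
The plan is to deduce the statement from a universality property of the product $D^n\x I$, where $D$ is chosen to be a \emph{universal dendrite}: one whose branch points are dense and all of infinite order. Since the $n$-dimensional Menger compactum $\mu^n$ contains a topological copy of every $n$-dimensional compactum, it suffices to produce a single embedding $\mu^n\emb D^n\x I$; more flexibly, I would represent an arbitrary $n$-dimensional $X$ as an inverse limit $X=\invlim(K_i,p_i)$ of finite $n$-dimensional simplicial complexes with simplicial bonding maps $p_i\:K_{i+1}\to K_i$, namely the nerves of a cofinal sequence of finite open covers of order $\le n+1$ with mesh tending to $0$, and then embed the whole system at once. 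Concretely, I would embed the mapping telescope $\Tel(K_i,p_i)$ together with its limit set into $D^n\x I$ so that the cylinder of $p_i$ fills the slab $D^n\x[1/(i{+}1),1/i]$ and $X$ reappears as the accumulation set in $D^n\x\{0\}$; because the nerve maps separate points as the mesh tends to $0$, this accumulation set is a homeomorphic copy of $X$.

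The engine of the argument is the observation that the telescope is an $(n+1)$-dimensional polyhedron that collapses onto its base, together with the fact that its collapsing direction can be aligned with the single interval factor $I$ while the ``horizontal'' combinatorics of each nerve $K_i$ is spread across the $n$ dendrite factors. This alignment is exactly what the one extra dimension of $D^n\x I$ over $D^n$ is for. The essential point is that dimension alone is not enough: a general $n$-complex needs $\R^{2n+1}$ to embed, and $D^n\x I$ is only $(n+1)$-dimensional, so already for $n=1$ a nonplanar graph such as $K_5$ cannot be placed in the manifold part. The extra room is supplied by the dense, infinite-order branch points of $D$: near such a point $D$ looks like an infinite star, so $D\x I$ contains ``books'' with arbitrarily many pages, and more generally $D^n\x I$ contains generalized books that realize adjacencies forbidden in an $(n+1)$-manifold. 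I would prove the required single-level embeddings by induction on $n$, replacing the usual general-position argument in a manifold by a perturbation that pushes two points sharing a fiber into distinct branches of some dendrite factor --- a move available precisely because the branch points are dense and of infinite order.

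With the pieces embedded, the assembly is forced: choosing the covers fine enough makes the slabs shrink, the images nest, and the telescope accumulates onto the copy of $X$ in $D^n\x\{0\}$, so that continuity and injectivity give an embedding by compactness. The hard part, and the step I expect to be the main obstacle, is the \emph{simultaneous} and \emph{convergent} version of the single-level embedding: one must place not merely each $K_i$ but each bonding cylinder, with uniform size control so that the images converge, while keeping the branch-direction perturbations compatible all the way down the tower rather than fixing one map at a time. Making these choices coherent over infinitely many levels is where the self-similarity of $D$ --- the fact that every branch again carries a copy of the whole universal dendrite --- has to be used in an essential way.
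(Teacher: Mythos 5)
Note first that the paper does not actually prove this statement: it is quoted with attribution to Nagata and Bowers, and only its trivial polyhedral case (Theorem \ref{bowers}) receives a proof, via the join/cone trick $K'\to S_0*\dots*S_n$. So your proposal must be judged on its own merits, and it has a gap that is visible already from results inside this very paper.

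The construction fails geometrically before any convergence issues arise. You place the mapping cylinder of $p_i\colon K_{i+1}\to K_i$ in the slab $D^n\times[1/(i+1),1/i]$ and ask that ``$X$ reappears as the accumulation set in $D^n\times\{0\}$.'' Since the slab heights tend to $0$, that accumulation set lies in the slice $D^n\times\{0\}\cong D^n$; if it were a homeomorphic copy of $X$, you would have embedded an arbitrary $n$-dimensional compactum in a product of $n$ dendrites. This is false: by the Borsuk--Patkowska theorem (Theorem \ref{Patkowska} of the paper), $S^n$ embeds in no product of $n$ dendrites. So the limit copy of $X$ must use the $I$-coordinate in an essential way and cannot sit in a single slice; the slab-plus-accumulation geometry has to be redesigned, not merely executed more carefully.

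Moreover, the step you defer as ``the main obstacle'' --- making the level-wise embeddings coherent and convergent --- is not a technical detail but the entire content of the theorem, and the paper shows your overall strategy cannot close it for free. Exhibiting $X$ as an inverse limit of complexes $K_i$, each embedded in $D^n\times I$, yields only that $X$ \emph{quasi}-embeds there (this is exactly the observation opening \S4 of the paper), and the paper's main result (Theorem \ref{thm:main}, proved via the Skliarienko compactum) is that quasi-embeddability in a product of dendrites does not imply embeddability: the Skliarienko compactum is an inverse limit of collapsible $2$-polyhedra, each embeddable in a product of two trees --- a telescope/cluster construction formally identical to yours --- yet it embeds in no product of two curves. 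So a correct proof must inject extra input precisely at the limiting stage (this is what the $(n+1)$-st one-dimensional factor buys), and an appeal to the self-similarity of the universal dendrite does not supply it. The actual proofs work differently: Bowers establishes general-position properties of products of dendrites with dense branch points and runs a Baire-category argument in the mapping space $C(X,\,D^n\times I)$, showing that $\eps$-maps with the relevant injectivity control form dense open sets, so that embeddings form a dense $G_\delta$; there is no tower of partial embeddings to be made to converge.
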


It is well-known that every dendrite embeds in the $2$-cube $I^2$; thus
Theorem \ref{thm:bowers} may be viewed as an improvement of the classical
Menger--N\"obeling--Pontriagin theorem that every $n$-dimensional compactum
embeds in the $(2n+1)$-cube $I^{2n+1}$.

Theorem \ref{thm:bowers} is trivial in the case where $X$ is a polyhedron:

\begin{theorem}\label{bowers}
Every compact $n$-polyhedron embeds in a product of $n$ trees and $I$.
\end{theorem}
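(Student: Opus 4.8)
The plan is to realize the embedding explicitly from a triangulation, using the standard ``barycentric coloring'' to produce the tree coordinates and a single piecewise-linear height function for the interval coordinate.

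First I would triangulate the given compact $n$-polyhedron $P$ and pass to the barycentric subdivision $K$. This has the advantage that its vertices carry a canonical proper coloring $c\colon V(K)\to\{0,1,\dots,n\}$, namely $c(\hat\sigma)=\dim\sigma$, where $\hat\sigma$ is the barycenter of a simplex $\sigma$ of the original triangulation: since the vertices of any simplex of $K$ are barycenters of a strictly increasing flag $\sigma_0\subsetneq\cdots\subsetneq\sigma_k$, their colors $\dim\sigma_0<\cdots<\dim\sigma_k$ are pairwise distinct, so every simplex of $K$ contains at most one vertex of each color. Writing $x\in|K|$ in barycentric coordinates $x=\sum_v t_v(x)\,v$ (with $t_v(x)\ge 0$, $\sum_v t_v(x)=1$, support a simplex), let $s_i(x)=\sum_{c(v)=i}t_v(x)$ be the total weight of color $i$, so $\sum_{i=0}^n s_i(x)=1$.

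For each color $i\in\{1,\dots,n\}$ I would take $T_i$ to be the star that is the wedge of arcs, one arc $[0,1]$ for each vertex $v$ with $c(v)=i$, joined at a center $*_i$ (a finite tree). Define $f_i\colon|K|\to T_i$ by sending $x$ to the point at parameter $t_v(x)$ on the arc indexed by the unique color-$i$ vertex $v$ in the carrier of $x$ (and to $*_i$ when $s_i(x)=0$); this is well defined and continuous since $t_v$ vanishes on any face omitting $v$. For the remaining color $0$ I would use no tree at all: choosing pairwise distinct heights $\lambda_v\in I$ for the vertices with $c(v)=0$, set $g\colon|K|\to I$, $g(x)=\sum_{c(v)=0}\lambda_v\,t_v(x)$, the piecewise-linear function with these vertex values. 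The candidate embedding is $F=(f_1,\dots,f_n,g)\colon|K|\to T_1\x\cdots\x T_n\x I$.

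The heart of the proof is injectivity of $F$, which then forces $F$ to be an embedding, $|K|$ being compact and the target Hausdorff. Suppose $F(x)=F(y)$. From $f_i(x)=f_i(y)$ for $i\ge 1$ one reads off that, for each such $i$, the points $x,y$ have the same active color-$i$ vertex and the same coordinate there; hence $t_v(x)=t_v(y)$ for every vertex $v$ of color $\ge 1$, and consequently $s_0(x)=1-\sum_{i\ge1}s_i(x)=s_0(y)$. If $s_0(x)=0$ the two points already coincide. Otherwise each of $x,y$ has a single active color-$0$ vertex, say $u_x,u_y$, carrying the full weight $s_0=s_0(x)$, so $g(x)=\lambda_{u_x}s_0$ and $g(y)=\lambda_{u_y}s_0$; then $g(x)=g(y)$ together with $s_0>0$ and distinctness of the $\lambda$'s forces $u_x=u_y$, whence $x=y$.

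The only genuinely non-formal point, and the step I would treat as the main obstacle, is the replacement of the last star tree by a single interval coordinate. The symmetric construction using all $n+1$ colors yields an embedding into a product of $n+1$ trees, and the substance of the statement is that one factor can be economized. Its justification is exactly the computation above: once the $n$ tree coordinates have pinned down every barycentric coordinate of colors $1,\dots,n$, the sole remaining ambiguity is which color-$0$ vertex is active, and a single real-valued piecewise-linear function with distinct vertex heights resolves it. I would finally remark that if $\dim P<n$ or a color class is empty the corresponding factor degenerates to a point, which is harmless.
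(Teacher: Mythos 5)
Your proof is correct, and it is built from the same raw materials as the paper's: the barycentric coloring $c(\hat\sigma)=\dim\sigma$, the star-shaped trees (your $T_i$ is exactly the cone $CS_i$ over the set $S_i$ of barycenters of $i$-simplices of the original triangulation), and the decision to absorb the color-$0$ class into the interval factor. Where the two routes genuinely diverge is in how the embedding is certified. The paper never checks injectivity of any map: it regards the barycentric subdivision $K'$ as a subcomplex of the join $S_0*\dots*S_n$, embeds the finite set $S_0$ into $I$, and then applies structural homeomorphisms of polyhedra --- namely $I*S=pt*CS\cong pt*(CS\cup S\times I)=I\times CS$ (the cone over a cone is the cylinder over the cone) and $CS=C(S_1*\dots*S_n)\cong CS_1\times\dots\times CS_n$ (the cone over a join is the product of cones) --- so its embedding is a composite of canonical inclusions and homeomorphisms. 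You instead unwind all of this into explicit coordinates $(f_1,\dots,f_n,g)$ and prove injectivity pointwise; your height function $g$ with distinct values $\lambda_v$ is precisely the elementary substitute for ``$S_0\subset I$ followed by the join-to-product homeomorphism''. What the paper's packaging buys: it is shorter modulo standard PL calculus of joins and cones, it shows that the whole join $S_0*\dots*S_n$ (not merely $X$) embeds in the product, and the same bookkeeping immediately yields Proposition \ref{cone} on cones. What your packaging buys: it is self-contained and elementary, needing no facts about joins or cones, and it isolates transparently the single non-formal point --- that once the $n$ tree coordinates pin down all weights of colors $1,\dots,n$, one real coordinate suffices to identify the at most one active color-$0$ vertex --- which is exactly the geometric content hidden in the paper's homeomorphism $I*S\cong I\times CS$.
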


\begin{proof}
Given a triangulation $K$ of the given polyhedron $X$, let $S_i$ be the set of
all vertices of the barycentric subdivision $K'$ that are barycenters of
$i$-simplices of $K$.
The simplicial map $K'\to S_0*\dots*S_n$ is clearly an embedding.
Hence $X$ embeds in $I*S$, where $S=S_1*\dots*S_n$.
Next, $I*S=pt*CS$ is homeomorphic to $pt*(CS\cup S\x I)=I\x CS$.
Finally, the cone $CS$ is homeomorphic to the product of $n$ trees
$CS_1\x\dots\x CS_n$.
\end{proof}

The above argument yields an explicit embedding of every compact $n$-polyhedron
in $I^{2n+1}$, which we have not seen in the literature.
This is strange, for a part of this construction is certainly well-known; it yields

\begin{proposition}[\cite{GMR}, \cite{KKS}]\label{cone} The cone over every
compact $n$-polyhedron embeds in a product of $n+1$ trees.
\end{proposition}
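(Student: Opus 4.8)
The plan is to stop the proof of Theorem~\ref{bowers} one step earlier and cone the entire construction, rather than replacing $S_0$ by $I$. Fix a triangulation $K$ of the compact $n$-polyhedron $X$ and, exactly as there, let $S_i$ be the (finite, discrete) set of barycenters of the $i$-simplices of $K$; the simplicial map $K'\to S_0*\dots*S_n$ embeds $X$ into the join $S_0*\dots*S_n$.

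Since $CX=pt*X$ and an embedding $A\emb B$ of compacta induces an embedding $pt*A\emb pt*B$ of the corresponding cones, the embedding above yields
$$CX=pt*X\emb pt*(S_0*\dots*S_n)=C(S_0*\dots*S_n).$$
Now I would invoke the homeomorphism $C(A*B)\cong CA\x CB$ --- precisely the identity already used at the end of the proof of Theorem~\ref{bowers}, where $CS=C(S_1*\dots*S_n)$ is identified with $CS_1\x\dots\x CS_n$. Applying it $n$ times gives
$$C(S_0*\dots*S_n)\cong CS_0\x CS_1\x\dots\x CS_n.$$
Finally, each $S_i$ is a finite discrete set, so each $CS_i$ is the cone over finitely many points, that is, a star, which is a tree. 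Hence $CX$ embeds into the product of the $n+1$ trees $CS_0,\dots,CS_n$, as claimed.

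The whole argument is short, and the only point demanding genuine care is the homeomorphism $C(A*B)\cong CA\x CB$: it rests on the two factors being honest (star-shaped) cones, so that their product is star-shaped with respect to the pair of cone points and is therefore the cone over its own link $A*B$. This is harmless in our setting, where the $S_i$ are finite and everything in sight is a compact polyhedron; indeed the same identity, in the special case $A=pt$, already drives the proof of Theorem~\ref{bowers}. Thus I expect no real obstacle: the proposition falls out as a byproduct of that proof, obtained by coning $X$ itself instead of adjoining the extra $I$ factor.
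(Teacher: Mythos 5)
Your proof is correct and is essentially the paper's own argument: the paper derives Proposition~\ref{cone} precisely as the ``well-known part'' of the proof of Theorem~\ref{bowers}, namely the embedding $K'\to S_0*\dots*S_n$ followed by coning and the iterated homeomorphism $C(A*B)\cong CA\x CB$, so that $CX\emb CS_0\x\dots\x CS_n$, a product of $n+1$ stars. No gaps; the one step you flag as delicate (the cone-of-join identity) is exactly the identity the paper itself relies on.
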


\begin{theorem}[Borsuk--Patkowska \cite{Bo2}]\label{Patkowska}
The $n$-sphere $S^n$ does not embed in any product of $n$ dendrites,
for each $n\ge 0$.
\end{theorem}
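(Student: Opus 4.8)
The plan is to deduce the theorem from a single cohomological lemma about subsets of products of dendrites, proved by induction on the number of factors; throughout I use \v{C}ech cohomology, written $\check H^*$. For $n=0$ the statement is the triviality that the two-point space $S^0$ does not embed in the one-point space (the empty product), so assume $n\ge1$. The lemma I would isolate is the following.

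\emph{Lemma $Q(m)$: every compact subset $C$ of a product $T_1\x\dots\x T_m$ of $m$ dendrites satisfies $\check H^m(C;\Z)=0$.} Granting $Q(n)$, the theorem is immediate: an embedded copy of $S^n$ inside a product of $n$ dendrites is such a compact subset, so it would force $\check H^n(S^n;\Z)=0$, contradicting $\check H^n(S^n;\Z)\cong\Z$.

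For the base case $Q(1)$ I would use Bruschlinsky's isomorphism $\check H^1(C;\Z)\cong[C,S^1]$: a map $C\to S^1$ extends, since $S^1$ is an ANR and $C$ is closed, to an open neighborhood $N$ of $C$ in $T_1$; each component of $N$ is a connected open subset of a dendrite, hence uniquely arcwise connected and therefore contractible, so the extension, and with it the original map, is null-homotopic. Thus $[C,S^1]=0$ and $Q(1)$ holds. For the inductive step $Q(m-1)\Rightarrow Q(m)$ with $m\ge2$, I would project $C$ to the last factor by $f=p_m|_C\:C\to T_m$, a closed map of compacta whose fibers $C_t=C\cap(T_1\x\dots\x T_{m-1}\x\{t\})$ lie in a product of $m-1$ dendrites. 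By $Q(m-1)$ we have $\check H^{m-1}(C_t)=0$, and $\check H^q(C_t)=0$ for $q\ge m$ since $\dim C_t\le m-1$; hence $\check H^q(C_t)=0$ for all $q\ge m-1$. The Leray sheaf $\mathcal H^q$ of $f$ has stalk $\check H^q(C_t)$ at $t$, so $\mathcal H^q=0$ for $q\ge m-1$, while $\dim T_m=1$ makes $\check H^p(T_m;\mathcal H^q)=0$ for $p\ge2$. In the Leray spectral sequence $E_2^{p,q}=\check H^p(T_m;\mathcal H^q)\Rightarrow\check H^{p+q}(C)$ every term of total degree $m$ then vanishes, as it would require $p\le1$ and $q\le m-2$, incompatible with $p+q=m$; therefore $\check H^m(C)=0$.

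The main obstacle is precisely this inductive step, i.e.\ the cohomological-dimension estimate packaged in the spectral sequence: the fact that a closed map onto a one-dimensional base cannot raise the top \v{C}ech cohomology beyond what the fibers already carry (a Vietoris--Begle/Skljarenko phenomenon). If one wishes to avoid the spectral sequence, the same conclusion can be obtained by hand: cut $T_m$ at a cut point $c$ into subdendrites $\bar U,\bar V$ with $\bar U\cap\bar V=\{c\}$, apply Mayer--Vietoris to the closed cover $\{f^{-1}\bar U,\ f^{-1}\bar V\}$ whose overlap $f^{-1}(c)$ lies in a product of $m-1$ dendrites (so $\check H^{m-1}(f^{-1}(c))=0$ by $Q(m-1)$), and iterate over ever finer cut-point decompositions of $T_m$, passing to the limit by continuity of \v{C}ech cohomology; this merely unwinds the spectral-sequence argument. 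Everything else, namely the base case and the concluding contradiction with $\check H^n(S^n)\cong\Z$, is routine.
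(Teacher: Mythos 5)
Your proof is correct, but it takes a genuinely different route from the one the paper points to. The paper treats Theorem \ref{Patkowska} as a cited result of Borsuk--Patkowska \cite{Bo2} and remarks that another proof follows from the easy part of Corollary \ref{characterization}: if $S^n$ PL embedded in a product of $n$ trees, then by the projection argument for (ii)$\Rightarrow$(iii) (triangulate the projection to the $(n-1)$-polyhedron factor, apply Pontryagin's lemma over the interior of a top-dimensional simplex, and extract a free face from a free vertex of the forest fiber) it would collapse onto an $(n-1)$-polyhedron, which is absurd because a closed manifold has no free faces. That argument is elementary PL topology, but it directly rules out only PL embeddings into products of trees; the full topological statement about dendrites is exactly what carries the Borsuk--Patkowska attribution. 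Your inductive lemma $Q(m)$ --- vanishing of $\check H^m$ for every compact subset of a product of $m$ dendrites, proved via the Leray spectral sequence over a $1$-dimensional base --- handles arbitrary topological embeddings into genuine dendrites, and in fact proves more: no compactum with $\check H^n\ne 0$ embeds in a product of $n$ dendrites. The price is sheaf-theoretic machinery (tautness of compact subspaces, the identification of the stalks of the Leray sheaf with the cohomology of the fibers for closed maps, and the bound of sheaf cohomological dimension by covering dimension; all of this is in Bredon \cite{Br}, which the paper itself uses for its cohomological arguments). In spirit your scheme is close to the paper's local-cohomology machinery of Sections 3--4 (Lemmas \ref{lem:1-dim}, \ref{lem:product}, \ref{lem:incl}, used for Theorem \ref{thm:non-emb}), but note that that machinery operates in degree $n+1$ and is blind to $S^n$, since $H^{n+1}(S^n,\,S^n\setminus\{x\})=0$; so your global degree-$n$ lemma is a genuine alternative rather than a specialization of what the paper does. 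Two spots deserve tightening: ``uniquely arcwise connected and therefore contractible'' is not a valid inference for general continua, so justify contractibility of a connected open subset $U$ of a dendrite $T$ by arc-convexity (the unique $T$-arc between two points of $U$ lies in $U$, by unique arcwise connectedness of $T$ and arcwise connectedness of $U$) together with a convex metric on $T$; and the closing Mayer--Vietoris sketch is too loose to replace the spectral sequence, though since you offer it only as an alternative, the proof stands on the Leray route.
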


Another proof of Theorem \ref{Patkowska} is given by the easy part of our Theorem \ref{characterization} below.

\begin{theorem}[Gillman--Matveev--Rolfsen \cite{GMR}]\label{GMR}
Every compact connected PL $n$-manifold with nonempty boundary embeds in a product of $n$ trees.
\end{theorem}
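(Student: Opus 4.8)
The plan is to deduce Theorem~\ref{GMR} from two ingredients: the existence of a low-dimensional spine for a bounded manifold, and an inductive ``thickening'' of an embedding of that spine. First I would invoke the standard fact of PL topology that a compact connected PL $n$-manifold $M$ with $\partial M\ne\emptyset$ collapses to a subpolyhedron $P$ with $\dim P\le n-1$ (a spine): because $\partial M\ne\emptyset$, $M$ admits a handle decomposition with handles of index $\le n-1$, and turning this into a collapse gives $M\searrow P$, with $M$ a regular neighborhood of $P$.

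Next, since $\dim P\le n-1$, Theorem~\ref{bowers} embeds $P$ in a product of $n-1$ trees and $I$, that is, in a product $Q=T_1\x\dots\x T_n$ of $n$ trees (take $T_n=I$). The reason for passing to the spine is precisely that a general $n$-polyhedron is only known to embed in $n$ trees \emph{and} an extra $I$, one factor more than we are allowed; collapsibility is what should let us save that factor. The product $Q$ is already $n$-dimensional, so there is room to thicken the $(n-1)$-dimensional $P$ to a copy of $M$ \emph{without} introducing an $(n+1)$st factor.

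The heart of the argument is to realize this thickening. I would reconstruct $M$ from $P$ by reversing the collapse, as a finite sequence of elementary expansions $P=X_0\nearrow X_1\nearrow\dots\nearrow X_m=M$, and prove by induction that each $X_j$ embeds in some product of $n$ trees, enlarging the tree factors (but never their number) at each step. Concretely, an elementary expansion attaches a simplex $\sigma$ of dimension $\le n$ to $X_{j-1}$ along the part $A=\overline{\partial\sigma\but\tau}$ of its boundary, where $\tau$ is the free face. I would carry $\sigma$ into a new slab $T_1\x\dots\x e\x\dots\x T_n$ obtained by attaching a single new edge $e$ to one tree $T_i$ at a suitable branch point, choosing $i$ and the attaching point so that the slab meets the old product exactly along the embedded image of $A$. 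The branching already present in the trees $T_i=CS_i$ supplied by Theorem~\ref{bowers} is what creates room for non-planar thickenings; recall that the M\"obius band cannot be thickened inside $I^2$, so genuinely branching trees are indispensable.

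The main obstacle is bookkeeping the embeddings through these expansions: one must maintain an inductive normal form guaranteeing that the attaching region $A$ of the next cell lies in a coordinate slice $\{\,x_i=p_i\,\}$ over a branch point (or endpoint) $p_i$ of $T_i$, so that growing an edge there glues $\sigma$ along exactly $A$ and creates no spurious intersection with $X_{j-1}$. Establishing such a normal form—in effect a general-position and tameness statement for the embedding along the free faces used in the collapse—and verifying that it is preserved under each expansion while the number of tree factors stays fixed at $n$, is the crux. Once it is in place the induction runs and yields $M\hookrightarrow T_1'\x\dots\x T_n'$, a product of $n$ trees.
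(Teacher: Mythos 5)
Your overall strategy is exactly the paper's: pass to an $(n-1)$-dimensional spine $P$ of $M$, embed $P$ in a product of $n$ trees, and reverse the collapse $M\searrow P$ by elementary expansions, enlarging the tree factors (but never their number) at each step. This is precisely the implication (iii)\imp (i) of Corollary \ref{characterization}, which the paper proves via Lemma \ref{standard embedding} (your appeal to Theorem \ref{bowers}, except that one also needs the images of simplices to be cubulated by subcomplexes) together with Lemma \ref{main lemma} (your inductive thickening step). The problem is that the step you yourself label ``the crux'' is the entire content of Lemma \ref{main lemma}, and the mechanism you propose for it --- maintain a normal form in which the attaching region $A$ of the next cell lies in a coordinate slice $\{x_i=p_i\}$, then realize the new cell as a slab $A\times e$ over a single new edge --- cannot work. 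In an elementary expansion the attaching region is $\partial\sigma$ minus one open facet, i.e.\ almost all of $\partial\sigma$; and whenever the expanding manifold closes up around a vertex where the already-built complex changes coordinate direction, the last cell attached there meets the old complex in faces pointing in \emph{different} coordinate directions, so its attaching region lies in no slice. Such vertices are unavoidable: already for $n=2$ (the M\"obius band expanding from its core circle) the embedded spine must turn corners, and in each sector at a corner the last-attached $2$-cell is glued along a bent arc, for \emph{every} choice of embedding, cubulation and expansion order. So the normal form cannot be established; it is not a bookkeeping difficulty but a false invariant.

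What is missing is the paper's principal ingredient, the Fisk--Izmestiev--Witte lemma. In the proof of Lemma \ref{main lemma} one does \emph{not} flatten the attaching ball $B$; instead one works locally at each face $q$ of $B\but\partial B$: the vertices of $\lk(q,T)$ are colored by which tree factor they come from, $\lk(q,B)$ is then a $C$-colored combinatorial $(d-1)$-sphere with $d-1<\#C-1$, and the Fisk--Izmestiev--Witte lemma provides an abstract $C$-colored combinatorial ball $D$ bounding it. Realizing $D$ inside the link requires attaching $k_i$ new edges to the tree $T_i$ for \emph{each} color $i\in C$ --- in general several edges to several different trees at a single step, not one edge to one tree --- and the thickening ball $\beta$ is assembled from the pieces $\fll q\flr\times(C^*D)^\#$ over all such faces $q$, with the combinatorics arranged so that $\beta\cap Q=B$. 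This colored sphere-to-ball extension is trivial only when $d\le 1$, which is essentially why the $n=2$ case (Koyama--Krasinkiewicz--Spie\.z) is easier; for $n\ge 3$ it is the heart of the matter, and nothing in your proposal supplies it or a substitute. So you have the right skeleton --- the same one the paper uses, as opposed to the original Gillman--Matveev--Rolfsen route via Proposition \ref{cone} and a reconstruction theorem --- but a genuine gap sits exactly at the load-bearing step.
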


This was originally a consequence of Proposition \ref{cone} along with a ``reconstruction theorem'' announced in \cite{GMR}.
Another proof of Theorem \ref{GMR} is given by our Theorem \ref{characterization}
below, albeit the trees that it produces need not be cones over finite sets.

Nagata's original motivation for considering embedding into products of $1$-dimensional spaces
related to dimension theory (see \cite{Na}).
Borsuk's proof of the $2$-dimensional case of Theorem \ref{Patkowska} was a solution
to Nagata's problem; on the other hand, the author learned from W. Kuperberg,
a student of Borsuk who has generalized Theorem \ref{Patkowska} \cite{Ku}, that
Borsuk saw this result as a part of his work on the problem of uniqueness of
decomposition of ANRs into products.
Yet another motivation for embedding into products of trees was the Poincar\'e
Conjecture (now also known as Perelman's Theorem):

\begin{theorem}\label{Gillman}
(a) {\rm (Gillman \cite{Gi})} If a compact acyclic $3$-manifold embeds in the product of
a tree and $I^2$, then it is collapsible.

(b) {\rm (Zhongmou \cite{Zh})} Every compact connected $3$-manifold with nonempty boundary embeds in
the product of two triods and $I$.
\end{theorem}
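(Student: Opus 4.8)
The two parts are due to different authors and rest on unrelated ideas, so I would establish them separately. Part (b) is a sharpening of Theorem \ref{GMR}: the latter already embeds $M$ in a product of three trees, and the entire content is to cut the tree factors down to two triods and an arc. I would build the embedding from a handle decomposition. Since $\partial M\ne\emptyset$, the manifold $M$ admits a handle decomposition with a single $0$-handle and handles of index at most $2$; equivalently $M$ is a regular neighbourhood of a $2$-dimensional spine. Writing $Y$ for the triod and $c$ for its branch point, the plan is to realise the $0$-handle as a cube $a\x b\x I$, where $a,b$ are short arcs through the two branch points, and then to route the $1$- and $2$-handles through $Y\x Y\x I$ one at a time, using the free arms of the two triods together with the $I$-coordinate as parking space and placing the attaching regions of successive handles at distinct interior levels along the arms. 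Each branch point is spent once, to separate the two sides across which a handle must pass, while the remaining combinatorial data of the attaching maps is absorbed into the interval coordinates. The genuine difficulty is that a triod has bounded branching whereas $M$ may carry arbitrarily many handles: the crux is a stacking lemma asserting that the two branch points already suffice and that the routed handles stay disjoint, which should work precisely because each arm is an interval and so accommodates unboundedly many parallel routes.

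For part (a) I would analyse the restricted projection $f=\pi|_M\colon M\to T$, where $\pi\colon T\x I^2\to T$ is the first-factor projection, placed in PL general position. First note that $M$ has nonempty boundary: an acyclic space has Euler characteristic $1$, whereas a closed odd-dimensional manifold has Euler characteristic $0$, so $M$ cannot be closed. For $t$ interior to an edge of $T$ the level set $f^{-1}(t)=M\cap(\{t\}\x I^2)$ is a surface properly embedded in the square $\{t\}\x I^2\cong D^2$, hence a disjoint union of \emph{planar} surfaces. I would then run a sweepout along $T$, collapsing $M$ from the leaves inward: over a single edge $M$ is a planar-surface cobordism, which I would reduce to a normal form and collapse onto the fibre over the inner endpoint, and then induct over the finite tree. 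Acyclicity of $M$ is the essential input here, since a planar surface is only homotopy equivalent to a wedge of circles and so the fibres are not individually collapsible; it is the vanishing of $H_*(M)$ that forces the boundary curves created along the way to bound discs and thus licenses the elementary collapses.

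The main obstacle in (a) is the behaviour over the branch vertices of $T$ and at the finitely many critical levels of $f$. One must show that the local models there admit collapses compatible with those already performed over the adjacent edges, so that the leaf-to-root induction assembles into a single global collapse of $M$ to a point, which is exactly the assertion that $M$ is collapsible, as required. (By Whitehead's theorem a collapsible PL $3$-manifold is then a ball, which is the point of contact with the Poincar\'e Conjecture recorded in the discussion above.)
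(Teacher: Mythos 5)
A preliminary remark: the paper contains no proof of this statement --- Theorem \ref{Gillman} is quoted as background, with both halves cited to their original sources (\cite{Gi}, \cite{Zh}) --- so your attempt has to be judged on its own merits, and on those merits both halves have genuine gaps. In part (b), everything is concentrated in the ``stacking lemma'' that you explicitly leave unproved, and the difficulty it would have to overcome is not the one you identify. Room on the arms of the triods is not the issue: since $Y\times Y\times I$ is only $3$-dimensional, an embedding of $M$ is codimension zero, so once the $0$-handle and the $1$-handles are embedded, the attaching curves of the $2$-handles are completely determined (up to isotopy) curves on the boundary of the embedded handlebody; there is no residual freedom to ``route'' a $2$-handle, and no amount of parking space helps. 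Each such curve must bound an embedded thickened disk in the closure of the complement, and whether it does depends delicately on how the handlebody was embedded --- the curves can perfectly well be knotted with respect to the product structure. Arranging the lower-index handles from the outset so that all $2$-handles can simultaneously be attached disjointly inside $Y\times Y\times I$ is precisely Zhongmou's theorem; his argument goes through a spine/special-position construction (in the spirit of the reconstruction approach behind Theorem \ref{GMR}), not through handle-by-handle routing.

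In part (a), the steps you actually justify (acyclicity gives $\chi(M)=1$, hence $\partial M\neq\emptyset$; generic fibers over edge points are planar surfaces) are correct but easy, while the core of the argument is replaced by the sentence ``one must show that the local models \dots admit collapses compatible with those already performed,'' which is a restatement of the conclusion rather than a step toward it. Two concrete problems. First, collapsing the part of $M$ lying over an edge ``onto the fibre over the inner endpoint'' cannot be the whole mechanism: that fibre is a planar surface, homotopy equivalent to a wedge of circles, and admits no further collapse to a point; so the collapse must be organized globally, and the way the global hypothesis $\tilde H_*(M)=0$ ``forces the boundary curves created along the way to bound discs'' --- in effect a compression/innermost-disk argument carried out inside the product structure, compatibly across the branch vertices of $T$ and the critical levels --- is exactly the content of Gillman's proof and is absent here. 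Second, keep in mind that this is a pre-Perelman Poincar\'e-type theorem: a collapsible compact $3$-manifold is a PL ball, so the statement asserts that fake balls do not embed in the product of a tree and $I^2$. Any correct proof must therefore do genuine geometric work at exactly the two places your sketch defers.
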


A discussion of further results in the theory of embeddings into products of
dendrites (or curves) can be found in the recent paper \cite{KKS}, which itself
is a significant addition to this theory (see also additional details in
\cite{KKS$'$}).
We should mention

\begin{theorem}[Koyama--Krasinkiewicz--Spie\.z \cite{KKS}]
There exists a $2$-polyhedron that collapses onto a product of two graphs
but does not embed in any product of two graphs.
Yet it embeds in a product of two curves.
\end{theorem}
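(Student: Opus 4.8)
The plan is to reduce embeddability into a product of two spaces to a separation-of-points condition, and then to engineer a polyhedron for which that condition is met by curves but fails for finite graphs. The basic observation is that a compactum $P$ embeds in a product $A\x B$ if and only if there are maps $f\:P\to A$ and $g\:P\to B$ for which $(f,g)\:P\to A\x B$ is injective (hence an embedding, by compactness). Thus $P$ embeds in a product of two graphs (resp.\ curves) exactly when $P$ admits two maps to graphs (resp.\ curves) that jointly separate points. Injectivity of $(f,g)$ means precisely that $g$ is injective on each fibre of $f$ and $f$ on each fibre of $g$, so the two fibre families constitute a pair of transverse ``rulings'' of $P$---a coordinate grid whose realisability over a prescribed class of $1$-dimensional parameter spaces is exactly what must be controlled.

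I would then build $P$ explicitly as a product of two graphs with collapsible decoration. Take $Q=G_1\x G_2$, where $G_1$ and $G_2$ each carry an essential loop (for instance each a circle or a $\theta$-graph), and attach to $Q$ a finite family of triangular \emph{fins}, each glued along two of its edges with the third edge free. Collapsing these free faces recovers $Q$, so $P\searrow Q$ is a product of two graphs and $P\simeq Q$; in particular $\pi_1(P)$ is a product of two free groups and $H_2(P)\ne 0$, so $P$ does not collapse onto a $1$-polyhedron and hence, by Theorem \ref{characterization}, already fails to embed in a product of two \emph{trees}. The fins are to be placed along an essential loop of $Q$ in a deliberately ``twisted'' cyclic pattern, their only role being to make the coordinate grid above unrealisable over finite graphs.

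For the positive statement I would enlarge each factor to a curve $C_i\supset G_i$ and route the fins into the room thereby created. The decisive feature is that a curve, unlike a graph, may contain a point of infinite order---a limit of branch points, as in a harmonic fan---whose local structure is qualitatively unavailable at any finite vertex. Threading the twisted loop of $Q$ through such a point in one factor furnishes, inside the product $C_1\x C_2$, enough independent local directions to carry all the fins simultaneously while keeping $(f,g)$ injective. I would make this precise by extending the identity embedding $Q\hookrightarrow C_1\x C_2$ fin by fin, using the infinite-order point to absorb the twist that no finite vertex can.

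The main obstacle is the negative statement: $P$ embeds in no product of two finite graphs. I would argue by contradiction from a hypothetical embedding $(f,g)\:P\hookrightarrow H_1\x H_2$ and analyse the two transverse rulings it induces. Restricted to $Q$ they must be compatible with its product structure, and the twisted fins then force the rulings to interchange their roles around the essential loop---an orientation/parity defect that a finite grid cannot accommodate, since the branch sets of $H_1$ and $H_2$ are finite $1$-complexes offering no point of infinite order to absorb it; consequently two points of $P$ would have to be identified. Turning this heuristic into a proof is the crux: one must isolate the correct invariant---most plausibly a $\Z/2$ twisting class, or a van~Kampen--type obstruction in $H^2(P)$ detecting the product-of-graphs structure---prove it nonzero for the twisted $P$ over every finite product of graphs, and check that it is killed by the infinite-order point supplied by curves. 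This is exactly the step carried out in \cite{KKS}; it is where the distinction between graphs and curves genuinely resides, the remaining parts being comparatively routine.
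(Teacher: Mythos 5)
Your proposal has a genuine gap, and it sits exactly where the theorem lives. The heart of the statement is the negative part: the polyhedron embeds in \emph{no} product of two finite graphs. Your treatment of this is a heuristic about ``transverse rulings'' and a ``parity defect,'' and you end by saying that isolating the relevant invariant ``is exactly the step carried out in \cite{KKS}'' --- but that step \emph{is} the theorem, so deferring it to \cite{KKS} makes the argument circular rather than blind. The cheap route you do carry out (your $P$ is homotopy equivalent to $G_1\x G_2$, so $H_2(P)\ne 0$, so $P$ does not collapse onto a $1$-polyhedron, so by Corollary \ref{characterization} it misses every product of two \emph{trees}) cannot be upgraded to graphs: since $H^1(P)\ne 0$, the lifting trick of Remark \ref{contr-non-coll} (replacing graphs by compact subtrees of their universal covers) is unavailable, which is precisely why a new argument is needed and why the theorem is nontrivial. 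The positive part is equally unproven: ``threading the twisted loop through a point of infinite order'' and ``absorbing the twist'' are not constructions anyone can check. Worse, because the ``deliberately twisted cyclic pattern'' of fins is never defined, there is a real risk that your example simply fails: fins attached along arcs that lie in (a subdivision of) the product cell structure of $G_1\x G_2$ can typically be accommodated by enlarging the two factors --- this is exactly the mechanism of Lemma \ref{main lemma} --- so whatever is supposed to obstruct embeddability must be pinned down precisely, and it is not.

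For comparison: the paper itself gives no proof either (the result is quoted from \cite{KKS}), but it does record the actual example, which is leaner and materially different from yours: $\Theta\x\Theta\cup_{J}I\x I$, a \emph{single} square attached to the product of two copies of the $\theta$-graph along an arc $J$ that lies in the interior of one $2$-cell of $\Theta\x\Theta$ except for a single endpoint, which sits at a corner of that cell. That corner-to-interior arc is the delicate feature driving the obstruction in \cite{KKS}: at a corner (a product of two branch points) any embedding into a product of graphs is forced to respect the local product structure, and the square attached along an arc running diagonally into the open $2$-cell is then incompatible with it; meanwhile curves, unlike graphs, provide the infinitely-branched local structure needed on the positive side. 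Nothing in your cyclic family of fins along an essential loop is shown to play this role, so the proposal as written neither reproduces the paper's example nor supplies an alternative proof.
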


The $2$-polyhedron in question is $\Theta\x\Theta\underset{J=I\x\{0\}}{\cup}I\x I$,
where $\Theta$ is the suspension over the three-point set, and the arc $J$ lies
in the interior of a $2$-cell of $\Theta\x\Theta$ apart from one endpoint, which
lies in a ``corner'' of that $2$-cell.

\subsection{Embedding contractible polyhedra in products of trees}

\begin{theorem}\label{collapsible} Every collapsible compact $n$-polyhedron PL embeds
in a product of $n$ trees.
Moreover, the product of trees collapses onto the image of the embedding.
\end{theorem}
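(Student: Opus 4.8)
The plan is to build the embedding by reversing a collapse and realizing one elementary expansion at a time inside a product of trees that is grown edge by edge, running this alongside an induction on the dimension $n$. For $n\le 1$ a collapsible $1$-polyhedron is itself a tree, so it embeds in one tree and that tree (being the polyhedron) collapses onto it trivially; this is the base of the dimension induction. Fix a triangulation and a collapse $X\searrow pt$, and read it backwards as a sequence of elementary expansions $pt=Y_0\nearrow Y_1\nearrow\dots\nearrow Y_m=X$, where the $j$-th step attaches a $k_j$-simplex $\sigma$ along the collapsible $(k_j-1)$-ball $B=\partial\sigma\but\Int\tau$ determined by its free face $\tau$. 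I would maintain inductively a PL embedding of $Y_j$ in a product $T^{(j)}$ of $n$ trees together with a collapse $T^{(j)}\searrow Y_j$.

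For the expansion $Y_j\nearrow Y_{j+1}=Y_j\cup\sigma$, observe that $(\sigma,B)$ is PL homeomorphic to $(B\times I,\,B\times\{0\})$, so it suffices to attach a copy of $B\times I$ along $B$. The mechanism I would use is to enlarge one tree factor $T_i$ by attaching a new edge $e$ at a suitable vertex $v$; multiplying out, this creates a new slab $N=e\times\prod_{\ell\ne i}T_\ell\cong I\times T''$, where $T''=\{v\}\times\prod_{\ell\ne i}T_\ell$ is a product of $n-1$ trees glued to $T^{(j)}$ exactly along the slice $\{v\}\times T''=T''$. I then place the new cell as $\sigma=B\times I\subset T''\times I=N$, provided $B$ is already situated as a subcomplex of this slice $T''$ and, via the dimension-induction hypothesis applied to the collapsible ball $B$ (of dimension $k_j-1\le n-1$, padding with one-point ``trees'' when $k_j-1<n-1$), the slice $T''$ collapses onto $B$.

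The collapse of the enlarged product would then be assembled from two standard facts. First, for any finite polyhedral pair $(P,Q)$ one has $P\times I\searrow(P\times\{0\})\cup(Q\times I)$ (proved by collapsing prisms over the simplices of $P\but Q$ from the top); applied to $(T'',B)$ this collapses the slab $N$ onto $(T''\times\{0\})\cup(B\times I)=T''\cup\sigma$, touching only cells interior to the new slab. Second, the inductive collapse $T^{(j)}\searrow Y_j$ may be performed relative to $\sigma$, since $\sigma$ meets $T^{(j)}$ only in $B\subset Y_j$; hence $T^{(j)}\cup\sigma\searrow Y_j\cup\sigma=Y_{j+1}$. Composing the two, $T^{(j+1)}=T^{(j)}\cup N\searrow Y_{j+1}$, which closes the induction and gives the theorem at $j=m$.

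The hard part is precisely the compatibility I assumed in the second paragraph: at every stage the attaching ball $B\subset Y_j$ must sit as a subcomplex of a single coordinate slice $\{v\}\times\prod_{\ell\ne i}T_\ell$, and moreover that slice (a product of $n-1$ trees already \emph{determined} by the previously chosen factors) must collapse onto $B$. The tension is that the dimension-induction hypothesis produces \emph{some} good product of $n-1$ trees carrying $B$, whereas here the slice is forced upon us by the other factors and the growing embedding, so we cannot freely install the lower-dimensional construction as a slice. I expect to resolve this by strengthening the inductive hypothesis so that the currently ``free'' part of $Y_j$ is kept exposed on a face of $T^{(j)}$, by ordering the expansions so that cells are attached in order of increasing dimension, and by subdividing and inserting collars so that each successive attaching region is first realized as such a slice before its cell is added; a cleaner alternative may be to realize the whole collapse of $X$ as a single monotone sweep-out along one distinguished tree factor, matching the product structure globally rather than cell by cell.
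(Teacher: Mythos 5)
Your overall skeleton is the same as the paper's (the paper proves a lemma that processes one elementary collapse/expansion at a time, maintaining a PL embedding into a product of $n$ trees that collapses onto the image, with the trees allowed to grow at each step), and your two auxiliary facts --- the prism collapse $P\times I\searrow (P\times\{0\})\cup(Q\times I)$ and performing the inductive collapse relative to the new cell --- are correct. But the gap you flag in your last paragraph is genuine, and it is not a bookkeeping issue that a strengthened hypothesis or a reordering of cells can repair: it is the entire content of the theorem. Your mechanism requires the attaching ball $B$ to lie in a single coordinate slice $\{v\}\times\prod_{\ell\ne i}T_\ell$, i.e.\ in a subcomplex on which the $i$-th coordinate is constant. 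In general $B$ cannot be so positioned: already for $n=2$, once two triangles of a triangulated disc have been attached, the arc along which the next one attaches typically alternates between cells of the form $(\mathrm{edge})\times(\mathrm{vertex})$ and $(\mathrm{vertex})\times(\mathrm{edge})$ of $T_1\times T_2$, hence lies in no slice of either kind. Which balls arise as attaching regions is dictated by the combinatorics of the given collapse, and as soon as previously attached cells point in different coordinate directions, their common frontier mixes those directions; attaching cells in order of increasing dimension does not change this, and the ``monotone sweep-out along one tree factor'' is essentially a restatement of what must be proved (it amounts to embedding $X$ in $(\mathrm{tree})\times(\text{$(n-1)$-polyhedron})$ compatibly with the collapse), not a proof of it.

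What replaces your single-edge/slab device in the paper is a \emph{facewise} thickening of $B$ whose normal direction is distributed over several tree factors simultaneously, with consistency supplied by the Fisk--Izmestiev--Witte lemma on extending colorings. For each cube $q=q_1\times\dots\times q_n$ of $B\setminus\partial B$, let $C$ be the set of coordinates $i$ for which $q_i$ is a vertex; the vertices of $\lk(q,T)$ are naturally $C$-colored, and $\lk(q,B)$ is a $C$-colored combinatorial sphere of dimension $k-n+\#C-1<\#C-1$. The FIW lemma fills it abstractly by a $C$-colored ball $D$, and realizing $D$ inside the link of $q$ forces one to attach, for \emph{each} $i\in C$, a whole batch of new edges to $T_i$ at the vertex $q_i$ --- not one edge to one tree. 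The piece of the thickening sitting over $q$ is then $\fll q\flr\times(C^*D)^\#$, and the union of these pieces over all faces $q$ of $B\setminus\partial B$, processed in a suitable order, is a cubulated $(k+1)$-ball $\beta$ meeting the old complex exactly in $B$; this is what plays the role of your $B\times I$. So the enlargement of the trees is scattered over many vertices of many factors at each expansion, which is precisely the phenomenon the single-slab picture cannot express; without this idea (or an equivalent one) your induction does not close.
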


The embeddability in the $2$-dimensional case is due to Koyama, Krasinkiewicz and
Spie\.z \cite{KKS}.
The principal additional ingredient in our proof of the general case is
the Fisk--Izmestiev--Witte lemma  \cite[Lemma 57]{Fi}, \cite{Iz}, \cite{Wi}
(see also \cite[Lemma 3.1]{HW}, \cite{Ca}), which asserts that for every
finite set $C$ (the `palette') of cardinality $\#C\ge d+1$, every $C$-colored
combinatorial $(d-1)$-sphere is color-preserving isomorphic to the boundary of
a $C$-colored combinatorial ball.
(A simplicial complex is {\it $C$-colored} if its vertices are colored
by the elements of $C$ so that no edge connects two vertices of the same color.)

In particular, this lemma implies that if a triangulation of $S^2$ admits
a $4$-coloring, then it extends to a triangulation of the $3$-ball where
the link of every interior edge is (combinatorially) an even-sided polygon.
As observed by R. D. Edwards and others in 1970s, the converse to this also
holds: every such triangulation of the $3$-ball has a $4$-colorable
boundary (see references in \cite{Iz}).

The $2$-dimensional case of Theorem \ref{collapsible} involves only the trivial
case $d\le 1$ of the Fisk--Izmestiev--Witte lemma.

\begin{corollary}\label{characterization}
Let $P$ be a compact $n$-polyhedron.
The following are equivalent:

(i) $P$ PL embeds in a product of $n$ trees;

(ii) $P$ PL embeds in a product of an $(n-1)$-polyhedron and a tree;

(iii) $P$ collapses onto an $(n-1)$-polyhedron;

(iv) $P$ PL embeds in a collapsible compact $n$-polyhedron.
\end{corollary}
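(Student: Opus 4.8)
The plan is to establish the cycle of implications (i) $\Rightarrow$ (ii) $\Rightarrow$ (iii) $\Rightarrow$ (iv) $\Rightarrow$ (i), so that all four conditions become equivalent. Two of the links are essentially formal. For (i) $\Rightarrow$ (ii), a product of $n$ trees $T_1\times\cdots\times T_n$ is at the same time the product of the $(n-1)$-polyhedron $T_1\times\cdots\times T_{n-1}$ with the tree $T_n$. For (iv) $\Rightarrow$ (i), if $P$ PL embeds in a collapsible compact $n$-polyhedron $C$, then Theorem \ref{collapsible} embeds $C$, and hence $P$, in a product of $n$ trees.

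For (iii) $\Rightarrow$ (iv), suppose $P\searrow R$ with $\dim R\le n-1$; after a subdivision we may assume $R$ is a full subcomplex of a triangulation of $P$. I would take $C:=CR\cup_R P$, the union of $P$ with the cone on $R$ glued along $R$. Then $\dim C=\max(\dim CR,\dim P)=n$ and $P\subset C$, so it remains only to check that $C$ is collapsible. The key point is that every elementary collapse in $P\searrow R$ removes a free face together with its unique coface, and both of these lie in $P\setminus R$; in particular such a free face is never a simplex of $R$, so, $R$ being full, it acquires no new coface in the cone $CR$ and remains free in $C$. Performing these collapses inside $C$ therefore yields $C\searrow CR$, and $CR\searrow pt$ as a cone, whence $C$ is collapsible.

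The substantive step is (ii) $\Rightarrow$ (iii). Let $P$ be a subpolyhedron of $Q\times T$ with $\dim Q=n-1$ and $T$ a tree, let $\pi\colon P\to T$ be the projection, and triangulate so that $\pi$ is simplicial. The basic observation is that no $n$-simplex of $P$ can map to a point of $T$, for otherwise it would lie in some $Q\times\{t\}\cong Q$, contradicting $\dim Q=n-1$; hence every $n$-simplex projects onto an edge of $T$, and every fiber $\pi^{-1}(t)$ is at most $(n-1)$-dimensional. It then suffices to remove all the $n$-simplices of $P$ by collapses, arriving at an $(n-1)$-polyhedron. I would do this by collapsing $T$ to a point one leaf edge at a time and pushing the corresponding top-dimensional part of $P$ inward: over a leaf edge $[a,b]$ with $b$ a free end, the $n$-simplices lying over $[a,b]$ should be collapsible from their faces over $b$, so that the $n$-dimensional part of $P\cap(Q\times[a,b])$ is absorbed into $P\cap(Q\times\{a\})$.

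The delicate point, and the one I expect to be the main obstacle, is exactly this leaf-ward collapse of the top-dimensional part: one must exhibit, at each stage, a genuine free $(n-1)$-face of the highest remaining $n$-simplices. Here the \emph{product} structure is essential and cannot be replaced by the bare map $\pi\colon P\to T$ --- a circle admits a nondegenerate simplicial map to an arc yet is not collapsible --- so the argument must use that $P$ sits inside $Q\times T$, and it must exploit that one needs only to reach dimension $n-1$, never to collapse lower-dimensional cycles away. I expect this to require choosing a product (prism) triangulation over each leaf edge and verifying freeness of the top faces as the collapse proceeds; this is the part I would treat most carefully. Finally, note that (i) $\Rightarrow$ (iii) reproves Theorem \ref{Patkowska}, since $S^n$ has nonzero $H_n$ and so cannot collapse onto an $(n-1)$-polyhedron.
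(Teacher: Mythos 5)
Your cycle (i)$\Rightarrow$(ii)$\Rightarrow$(iii)$\Rightarrow$(iv)$\Rightarrow$(i) matches the paper's overall structure, and three of the four links are correct and essentially the paper's own: (i)$\Rightarrow$(ii) and (iv)$\Rightarrow$(i) are exactly as in the paper, and your (iii)$\Rightarrow$(iv) via $C=CR\cup_R P$ is the paper's construction $P\cup_Q CQ$, correctly elaborated (your appeal to fullness of $R$ is not even needed: a free face used in $P\searrow R$ is not a simplex of $R$, hence not a face of any simplex of $R$, hence acquires no new cofaces in $CR$).

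The genuine gap is in (ii)$\Rightarrow$(iii), and it sits exactly where you flagged it. You project to the tree $T$ and propose to collapse $P$ leaf-ward, absorbing the $n$-simplices over a leaf edge $[a,b]$ into $P\cap(Q\times\{a\})$; but the existence of the free faces needed for this collapse is never established, and nothing in your set-up supplies them. Observe that when $T$ is a single edge, the ``local'' step you need (removing all $n$-simplices over $(a,b]$ by collapses) is already the full statement (iii), since any $n$-simplex not meeting $\pi^{-1}((a,b])$ would lie in the $(n-1)$-dimensional $Q\times\{a\}$; so the induction over leaf edges buys nothing, and all the difficulty is concentrated in the unproved step. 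Moreover, your orientation of the projection discards the only structural leverage available: with fibers lying in $Q$, an arbitrary $(n-1)$-polyhedron, free faces would have to come from the interval direction alone, where (as your own circle example shows) a bare simplicial map to an arc guarantees nothing. The paper projects the other way. It first replaces $P$ by a core $Q_0$ with no free faces (so that exhibiting a single free face yields a contradiction, rather than having to orchestrate an entire collapse), then triangulates the projection $f\colon |Q_0|\to R$ to the $(n-1)$-polyhedron. Over an interior point $p$ of a top-dimensional simplex $U$ of $R$ whose preimage is $n$-dimensional, the fiber $F=f^{-1}(p)$ is of dimension exactly $1$ and embeds in $T$, hence is a forest, hence has a free vertex $v$; Pontryagin's lemma \cite[Proposition C]{Po} (see also \cite{Co1}) gives $f^{-1}(U)\cong F\times U$ compatibly with the triangulation, so $v\times U$ produces a free face --- contradiction. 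The tree-ness of $T$ enters precisely through ``a $1$-dimensional compact subpolyhedron of a tree is a forest, and a forest with an edge has a leaf''; that is the source of free faces, and it is unavailable in your configuration. To repair your argument you would either have to prove your leaf-ward collapse lemma (which is not at all evident), or switch the roles of the two factors as the paper does.
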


Here (iv)\imp (i) follows from Theorem \ref{collapsible}, (i)\imp (ii) is obvious, (ii)\imp (iii)
is easy (see below), and to see that (iii)\imp (iv) it suffices to note that if $P$ collapses
onto $Q$ then the amalgamated union $P\cup_Q CQ$ is collapsible, where $CQ$ is the cone over $Q$.

Alternatively, (i)\imp (iv) is obvious, and another proof of (iii)\imp (i) is given in \S2.

\begin{proof}[Proof of (ii)\imp (iii)]
Suppose that $P$ is embedded in $R\x T$, where $R$ is an $(n-1)$-polyhedron and $T$ is a tree,
and $P$ does not collapse onto any $(n-1)$-polyhedron.
Let $P_0$ be a triangulation of $P$.
Then $P_0$ collapses onto a (generally non-unique) simplicial complex $Q_0$ that does not collapse
onto any proper subcomplex.
Then $Q_0$ has no free faces, and it follows that $Q:=|Q_0|$ does not collapse onto any proper
subpolyhedron.
By the hypothesis $Q$ is of dimension precisely $n$.
The projection $f\:Q\subset R\x T\to R$ can be triangulated by a simplicial map $Q_1\to R_1$.
Let $p$ is a point in the interior $U$ of a top-dimensional simplex of $R_1$ such that
the corresponding fiber $F:=f^{-1}(p)$ is of dimension precisely $1$.
The projection $F\subset R\x T\to T$ is an embedding, so $F$ is a forest.
Thus $F$ collapses onto a finite set, but is not a finite set itself; so it must have
a free vertex.
On the other hand, $f^{-1}(U)\cong F\x U$ by Pontryagin's lemma
\cite[Proposition C]{Po}, \cite[Theorem 1.3.1]{Wil} (see also \cite[\S5]{Co1}).
Hence $Q_1$ has a free face.
Thus $Q$ collapses onto a proper subpolyhedron, which is a contradiction.
\end{proof}

\begin{corollary}[Koyama--Krasinkiewicz--Spie\.z \cite{KKS}] \label{2d-case}
An acyclic compact $2$-polyhedron $P$ embeds in a product of two trees if and only if $P$ is
collapsible.
\end{corollary}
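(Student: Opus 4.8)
The plan is to deduce both implications from the machinery already assembled, with acyclicity entering in exactly one direction. For the \emph{if} part, suppose $P$ is collapsible. Then Theorem~\ref{collapsible}, applied with $n=2$, immediately yields a PL embedding of $P$ into a product of two trees (and in fact the product collapses onto the image). Note that this half uses neither acyclicity nor anything special about dimension $2$.

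For the \emph{only if} part, suppose $P$ PL embeds in a product of two trees. By the implication (i)$\,\Rightarrow\,$(iii) of Corollary~\ref{characterization} (with $n=2$), $P$ collapses onto a $1$-polyhedron $Q$, i.e.\ onto a finite graph. A collapse $P\searrow Q$ is a strong deformation retraction, so $Q$ is homotopy equivalent to $P$; since $P$ is acyclic, $Q$ is acyclic as well.

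The key step is then the elementary observation that an acyclic finite graph is collapsible. Indeed, $\tilde H_0(Q)=0$ and $H_1(Q)=0$ force $Q$ to be connected and cycle-free, hence a tree, and every finite tree collapses to a point by repeatedly deleting a free leaf edge. Composing the collapse $P\searrow Q$ with the collapse of $Q$ to a point shows that $P$ itself is collapsible, completing the equivalence.

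I expect all the real content to sit in this last step, and more precisely in the role of acyclicity: it is used solely to upgrade ``$P$ collapses onto a graph'' to ``$P$ collapses onto a \emph{tree}.'' The hypothesis cannot be dropped, since a non-acyclic $P$ may collapse onto, say, a circle, which is not collapsible. Beyond this, there is no genuine obstacle once Theorem~\ref{collapsible} and Corollary~\ref{characterization} are available; the proof is a short piece of homotopy-theoretic bookkeeping layered on top of them.
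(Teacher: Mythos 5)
Your proof is correct and is exactly the derivation the paper intends: the paper states Corollary~\ref{2d-case} without proof immediately after Corollary~\ref{characterization}, relying on Theorem~\ref{collapsible} for the ``if'' direction and on (i)$\Rightarrow$(iii) plus the observation that an acyclic graph is a tree (hence collapsible) for the ``only if'' direction. Your identification of acyclicity's sole role---upgrading the $1$-dimensional spine from a graph to a tree---matches the paper's logic precisely.
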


\begin{remark}\label{contr-non-coll}
Let $P$ be a compact polyhedron with $H^1(P)=0$. If $P$ embeds in a product of $n$ graphs
then it embeds in a product of $n$ trees, namely in the product of (appropriate compact
subtrees of) the universal covers of the $n$ graphs.
Thus ``trees'' can be replaced with ``graphs'' in Corollary \ref{2d-case}
in accordance with \cite{KKS}.
(In fact, it was shown in \cite{KKS} that an acyclic non-collapsible compact
$2$-polyhedron does not embed in any product of two curves.)
\end{remark}

\begin{corollary}\label{contractible}
Let $P$ be an $n$-polyhedron.
For $n\ne 3$, the following are equivalent:

(i) some product of $n$ trees collapses onto a PL copy of $P$;

(ii) $P$ collapses onto a contractible $(n-1)$-polyhedron;

(iii) some collapsible compact $n$-polyhedron collapses onto a PL copy of $P$.

\noindent
For $n=3$, the same holds with ``contractible'' replaced by ``$3$-deformable to a point''.
\end{corollary}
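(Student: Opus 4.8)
The plan is to recognise all three conditions as reformulations of the single assertion that $P$ is \emph{$n$-deformable to a point}, i.e.\ reducible to a point through expansions and collapses that never leave dimension $\le n$; for $n=3$ this is precisely the condition named in the statement, here applied to $P$ itself.

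I would first settle (i)$\Leftrightarrow$(iii). A product of $n$ trees is a collapsible compact $n$-polyhedron: a tree is collapsible, and an elementary collapse of one factor, multiplied by the remaining factors, is realised by a collapse of the product, so collapsibility passes to finite products. Hence a product of $n$ trees is itself an instance of the collapsible polyhedron in (iii), giving (i)$\imp$(iii). Conversely, if a collapsible compact $n$-polyhedron $C$ collapses onto $P$, then Theorem \ref{collapsible} embeds $C$ in a product $T$ of $n$ trees with $T\searrow C$; composing, $T\searrow C\searrow P$, which is (i).

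I would then prove (iii)$\Leftrightarrow$(ii). Suppose (iii), so that some collapsible compact $n$-polyhedron $C$ collapses onto $P$. Then $P$ embeds in the collapsible polyhedron $C$, so Corollary \ref{characterization} supplies a collapse $P\searrow Q$ onto an $(n-1)$-polyhedron $Q$, necessarily contractible since $P$ is. This already yields (ii) for $n\ne 3$. For $n=3$ one needs more: as every complex in sight has dimension $\le 3$, the collapses $C\searrow P\searrow Q$ together with the collapse of the collapsible $C$ to a point exhibit $Q$ as $3$-deformable to a point, which is the required form of (ii). For the converse I would start from a collapse $P\searrow Q$ onto an $(n-1)$-polyhedron and realise the triviality of $Q$ by an \emph{expansion--collapse}: I would write $Q\nearrow C_Q\searrow\mathrm{pt}$ with $\dim C_Q\le n$, so that $C_Q$ is collapsible and $C_Q\searrow Q$. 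The amalgam $C:=P\cup_Q C_Q$ is then a compact $n$-polyhedron that collapses onto $P$ (collapse the expansion cells $C_Q\searrow Q$, which remain free in $C$) and is itself collapsible (collapse $P\searrow Q$ first, reaching $C_Q$, then collapse $C_Q$ to a point); this is (iii).

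The whole argument turns on producing the expansion $Q\nearrow C_Q\searrow\mathrm{pt}$ within dimension $n$, and this is exactly where the dichotomy in the statement is forced. When $n-1\ne 2$ a contractible $(n-1)$-polyhedron is automatically $n$-deformable to a point, and the desired expansion--collapse exists by the dimension-controlled normal form of Whitehead's simple-homotopy theory; so ``contractible'' suffices. When $n-1=2$ no such upgrade from contractibility is available --- this is precisely the Andrews--Curtis phenomenon --- so one must hypothesise directly that $Q$ is $3$-deformable to a point, and then the normal form for $3$-deformations supplies $Q\nearrow C_Q\searrow\mathrm{pt}$ in dimension $\le 3$. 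The main obstacle is thus the dimension-preserving expansion--collapse normal form, together with the input, valid for $n-1\ne 2$, that contractible $(n-1)$-polyhedra are $n$-deformable to a point.
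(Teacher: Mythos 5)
Your proof is correct, and its skeleton coincides with the paper's: (iii)\imp (i) comes from Theorem \ref{collapsible} exactly as in the paper, the passage to (ii) uses Corollary \ref{characterization} together with homotopy invariance of collapsing (the paper runs this as (i)\imp (ii), you as (iii)\imp (ii) --- immaterial), and your amalgam $C=P\cup_Q C_Q$, collapsed two ways, is the paper's construction $S=P\cup_{Q=Q\x\{0\}}R\x I$ with $C_Q=R\x I$. The genuine divergence is in the simple-homotopy ingredient that produces the collapsible cap over $Q$. The paper quotes a theorem of Kreher--Metzler and Wall \cite[Satz 1a, Satz 1]{KMe}: under precisely your hypotheses on $Q$ there is an $(n-1)$-polyhedron $R$ collapsing onto a PL copy of $Q$ with $R\x I$ collapsible, and it takes the cap to be $R\x I$. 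You instead combine Wall's deformation theorem (a contractible polyhedron of dimension $k\ne 2$ $(k+1)$-deforms to a point) with the standard ``expansions first'' normal form: replacing an occurrence of $K\searrow K'\nearrow K''$ by $K\nearrow K\cup_{K'}K''\searrow K''$ is again a pair of elementary moves and does not raise dimension, so an $n$-deformation of $Q$ to a point can be rearranged into $Q\nearrow C_Q\searrow pt$ with $\dim C_Q\le n$. Both of your ingredients are standard (see \cite[Chapter I]{2DH}), and your route is somewhat more economical: the rearrangement lemma is much easier than the Kreher--Metzler theorem, which is in effect the rearrangement plus the product structure $R\x I$ that your argument never uses; the Wall input, on the other hand, is common to both proofs (the paper's attribution is to ``Kreher--Metzler and Wall''). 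One point you should make explicit in a final write-up: in (ii) the polyhedron $Q$ may have dimension strictly below $n-1$; if $\dim Q=2$ while $n\ge 4$, Wall's theorem does not apply verbatim, but Whitehead's weaker classical bound (a contractible $k$-polyhedron $(k+2)$-deforms to a point) still yields $n$-deformability, so your dichotomy ``$n-1\ne 2$ versus $n-1=2$'' is unaffected.
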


A polyhedron $P$ is said to be {\it $n$-deformable} to a polyhedron $Q$
if they are related by a sequence of collapses and expansions (i.e.\ the inverses
of collapses) through polyhedra of dimensions $\le n$.
The Andrews--Curtis Conjecture asserts that all contractible $2$-polyhedra
$3$-deform to a point (see \cite{2DH}, \cite{Mat}).
Among its motivations (cf.\ Curtis \cite[\S2]{Cu}) we mention that it would imply%
\footnote{By general position every $2$-polyhedron $P$ immerses in $I^4$, and
therefore embeds in a $4$-manifold $M$.
Let $N$ be a regular neighborhood of $P$ in $M$.
If $P$ $3$-deforms to a point, then the double of $N$ is the $4$-sphere
(see \cite[Assertion (59) in Ch.\ I]{2DH}).}
that every contractible $2$-polyhedron PL embeds in $I^4$.

\begin{proof} (iii)\imp(i) follows from Theorem \ref{collapsible} and
(i)\imp(ii) follows from Corollary \ref{characterization}. To prove (ii)\imp
(iii), suppose that $P$ collapses onto an $(n-1)$-polyhedron $Q$, and either
$Q$ is contractible, or $n=3$ and $Q$ $3$-deforms to a point.
Then by a result of Kreher--Metzler and Wall, there exists an
$(n-1)$-polyhedron $R$ such that $R$ collapses onto a PL copy of $Q$ and
$R\x I$ is collapsible \cite[Satz 1a, Satz 1]{KMe} (see also
\cite[\S XI.4]{2DH} for an outline of Kreher and Metzler's proof in English).
Let $S$ be the amalgamated union $P\cup_{Q=Q\x\{0\}} R\x I$.
Then $S\searrow R\x I\searrow pt$ and $S\searrow P\cup_Q R\searrow P$.
\end{proof}

\begin{remark}
For each $n\ge 3$ it is easy to construct a non-collapsible $n$-polyhedron
that collapses onto a contractible $(n-1)$-polyhedron (e.g.\ $I^n\vee\,$cone$(f)$
will do, where $f$ is any degree $0$ PL surjection $S^{n-2}\to S^{n-2}$).
A more interesting example is due to M. M. Cohen, who constructed for each $n\ge 4$
a contractible $(n-1)$-polyhedron $Q$ such that $Q\x I$ is not collapsible
\cite{Co2}.
Other constructions (with very different proofs) are now known: $P\x I^{k-2}$
is not collapsible if $P$ is the suspension over a $(k-1)$-dimensional spine of
a non-simply-connected homology $k$-sphere \cite{BCC}, and $P\x I^q$ is
not collapsible if $P$ is a certain ``$(3q+6)$-dimensional dunce hat''
\cite{BM}.
\end{remark}

A {\it free deformation retraction} of a space $X$ onto a subspace $Y$
is a homotopy $h_t\:X\to X$ starting with $h_0=\id$, ending with a retraction
$h_1$ of $X$ onto $Y$, and such that $h_t h_s=h_{\max(s,t)}$ for all $s,t\in [0,1]$.
A space is {\it freely contractible} if it freely deformation retracts onto
a point.
Collapsibility is known to be strictly stronger than topological collapsibility
\cite{BCC}, \cite{BM} and consequently than free contractibility; however,
in the case of $2$-polyhedra the three notions are equivalent \cite{Is}.

\begin{conjecture} A compact $n$-polyhedron collapses onto an $(n-1)$-polyhedron
if and only if it freely deformation retracts onto an $(n-1)$-polyhedron.
\end{conjecture}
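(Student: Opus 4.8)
Since the statement is a conjecture, I outline a line of attack and indicate where the real difficulty lies, rather than claiming a complete argument.

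The implication from collapse to free deformation retraction should be provable directly, and I would dispose of it first. Given a collapse $P\searrow Q$ onto an $(n-1)$-polyhedron, decompose it into elementary collapses across free faces and schedule them by a ``collapse clock'' over $[0,1]$. For each scheduled elementary collapse of a pair $(\sigma,\tau)$, $\tau$ a free face of $\sigma$, there is a canonical retraction pushing $\sigma$ onto the union of its remaining faces. Let $X_t\subseteq P$ be the part surviving after time $t$, so $X_0=P$ and $X_1=Q$, and let $h_t\colon P\to X_t$ be the retraction obtained by performing exactly those elementary retractions scheduled up to time $t$. Each $h_t$ is then a genuine retraction (hence idempotent), and because the collapse is a monotone process, performing the retractions scheduled up to time $s$ and then those up to time $t$ removes the same cells as performing those up to time $\max(s,t)$. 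This yields precisely the semigroup identity $h_t h_s=h_{\max(s,t)}$, so $P$ freely deformation retracts onto $Q$.

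The converse is the substance, and here I would first unpack the semigroup law purely formally. Putting $s=t$ gives $h_t h_t=h_t$, so each $h_t$ is a retraction onto its image $X_t:=h_t(P)$; and for $s\le t$ the law gives $h_s h_t=h_t=h_t h_s$, whence $X_1=Y\subseteq X_t\subseteq X_0=P$ is a \emph{decreasing} family of retracts with $h_s|_{X_t}=\id$ whenever $s\le t$. Thus a free deformation retraction is exactly a continuous, coherent filtration of $P$ by retracts descending from $P$ to $Y$, and the task is to realise such a topological filtration by a PL collapse. To do this I would try to mirror the fibre argument used above in the proof of (ii)$\Rightarrow$(iii). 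Suppose, for contradiction, that $P$ freely deformation retracts onto the $(n-1)$-polyhedron $Y$ yet collapses onto no $(n-1)$-polyhedron; collapse a triangulation of $P$ as far as possible to reach $Q=|Q_0|$ with no free faces, so that $\dim Q=n$. Since $h_1$ retracts $Q$ onto the $(n-1)$-dimensional set $Y$, a generic fibre of $h_1$ over a top stratum of $Y$ ought to be at most $1$-dimensional, hence (as in that earlier argument) a forest with a free vertex, and the plan is to promote this fibrewise free vertex, via Pontryagin's lemma applied to $h_1$, to an honest free face of $Q_0$, contradicting that $Q$ has none.

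The principal obstacle is the bridge between the topological and the PL categories. The retraction $h_1$ and the intermediate retracts $X_t$ are only continuous, so neither the fibres nor the $X_t$ need be polyhedral, and Pontryagin's lemma does not apply to a non-PL map; meanwhile a generic PL approximation of $h_t$ destroys the exact semigroup law that carries all the structure. Worse still, the analogous statement for $\dim Y=0$ --- that free contractibility implies collapsibility --- is \emph{false} for $n\ge 3$ by the examples of \cite{BCC}, \cite{BM}. Hence any correct proof must use the hypothesis $\dim Y=n-1$ in an essential way: the content of the conjecture is precisely that codimension one is rigid enough to force a purely topological flow to be combinatorially realisable, whereas higher codimension admits dunce-hat-type pathologies. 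Isolating the feature of codimension one that rules out those pathologies --- plausibly the one-dimensionality of the generic $h_1$-fibres, which lets the fibrewise forest collapses be globalised coherently over a triangulation of $Y$ --- is, I expect, the heart of the matter and the step most likely to resist a routine argument.
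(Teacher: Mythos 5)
This statement is a \emph{conjecture} in the paper: the authors give no proof of either direction, so there is no argument of theirs to compare yours against. Your decision to treat it as an open problem --- proving what can be proved and mapping the difficulty of the rest --- is the right posture, and your diagnosis of where the difficulty lies is sound.

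On the forward direction (collapse $\Rightarrow$ free deformation retraction), your outline is essentially correct but glosses the one point that needs care. The identity $h_t h_s=h_{\max(s,t)}$ is an identity of maps, checked pointwise; your appeal to the ``monotone removal of cells'' only compares images, and indeed the naive straight-line retraction across a free face does \emph{not} satisfy the semigroup law (for the linear homotopy one gets $h_th_s=h_{s+t-st}$, not $h_{\max(s,t)}$). The fix is to use the PL homeomorphism of pairs $(\sigma,\Lambda)\cong(\Lambda\times[0,1],\,\Lambda\times\{0\})$, where $\tau<\sigma$ is the free face and $\Lambda$ is the closure of $\partial\sigma\setminus\tau$, and to take $h_t(x,u)=(x,\min(u,1-t))$ extended by the identity elsewhere; then $\min(\min(u,1-s),1-t)=\min(u,1-\max(s,t))$ gives the law, and one checks that the sequential concatenation of two free deformation retractions is again one, using exactly the relation $h_s|_{X_t}=\id$ for $s\le t$ that you derived. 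Note that the paper implicitly takes this direction for granted: its remark that collapsibility is strictly stronger than free contractibility (citing \cite{BCC}, \cite{BM}) presupposes that collapses yield free deformation retractions.

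The converse is the real content of the conjecture, and you do not prove it --- but neither does the paper, nor, apparently, anyone. Your proposed strategy (pass to a core $Q$ with no free faces, examine generic fibers of $h_1$, try to manufacture a free face as in the proof of (ii)$\Rightarrow$(iii) of Corollary \ref{characterization}) and your identification of the obstruction (the $X_t$ and the fibers of $h_1$ are merely topological, so Pontryagin's lemma is unavailable, while PL approximation destroys the semigroup law) are both reasonable. Your structural observation is also correct and worth making explicit: since freely contractible non-collapsible polyhedra exist, no proof can establish the stronger statement ``free deformation retraction onto $Y$ implies collapse onto a polyhedron of dimension $\dim Y$''; the hypothesis $\dim Y=n-1$ must enter essentially. (Be careful, though, that those examples do not threaten the conjecture itself: Cohen's $Q\times I$, for instance, is non-collapsible but still collapses onto the $(n-1)$-dimensional $Q$, exactly as the conjecture predicts.) In summary: you have a correct-modulo-details proof of the easy half, an honest account of the hard half, and no resolution of the conjecture --- which is the actual state of the art, and not a defect relative to the paper.
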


\begin{remark} The proof of Theorem \ref{collapsible} involves
a (non-straightforward) construction of a collapsible cubulation of the given
collapsible polyhedron, which might be of interest in its own right.
Another such construction (a more straightforward one) has been used to characterize
collapsible polyhedra in the language of abstract convexity theory \cite{vdV},
and to establish the `only if' part of Isbell's conjecture: a compact polyhedron
is collapsible if and only if it is injectively metrizable \cite{MT},
\cite[Chapter VI]{Ve}.
(Isbell himself proved that the two conditions are equivalent for $2$-polyhedra
\cite{Is}.)
\end{remark}

\subsection{Embedding absolute retracts in products of dendrites}
A map $f\colon X\to Y$ is called an {\it $\eps$-map} with respect to some metric
on $X$ if every its point-inverse $f^{-1}(pt)$ is of diameter at most $\eps$.
A compactum $X$ is said to {\it quasi-embed} in a space $Y$ if for some
(or equivalently, every) metric on $X$, it admits an $\eps$-map into $Y$ for
each $\eps>0$.
We refer to \cite{SSS} for a definitive discussion of the (quite subtle)
difference between embeddability and quasi-embeddability of compact polyhedra
in $I^m$.

Our paper was originally motivated by the following problem.

\begin{problem}[Koyama, Krasinkiewicz, Spie\.z \cite{KKS$'$}]\label{prob:kks}
Suppose that $X$ is a compactum, quasi-embeddable in the $n$th power of
the Menger curve.
Can $X$ be embedded there?
\end{problem}

This problem appears as Problem 1.4 in \cite{KKS$'$} with the following
comments: ``Our next problem is of great interest, we believe it has
affirmative solution.''

In the present paper, we shall prove

\begin{theorem} \label{thm:main}
There exists a $2$-dimensional compact AR $X$ such that $X\x I^k$ quasi-embeds
in a product of $2+k$ dendrites but does not embed in any product of $2+k$ curves,
for each $k\ge 0$.
\end{theorem}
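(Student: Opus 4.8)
The plan is to realize $X$ as an inverse limit $X=\invlim(K_i,p_i)$ of collapsible compact $2$-polyhedra, with bonding maps $p_i\colon K_{i+1}\to K_i$ of small mesh engineered for two opposing purposes. Each $p_i$ should be a fine cell-like (say monotone) map, so that the limit is a trivial-shape $2$-dimensional continuum; at the same time the elementary collapses witnessing $K_i\searrow pt$ should be progressively undone by the $p_i$, so that the limit carries a nonvanishing obstruction to embeddability even though every finite stage is collapsible. The entire point of the example is this tension: collapsibility of the $K_i$ is exactly what yields quasi-embeddings at every scale, while its failure \emph{in the limit} is what blocks an actual embedding.

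For the quasi-embedding I would argue as follows. Since $K_i$ is collapsible, so is $K_i\x I^k$ (cross a collapse $K_i\searrow pt$ with $I^k$), whence by Theorem \ref{collapsible} it PL embeds in a product of $2+k$ trees $T^{(i)}_1\x\dots\x T^{(i)}_{2+k}$. Fixing once and for all a universal dendrite $D$ that contains a copy of every tree, I embed each factor $T^{(i)}_j$ in $D$ and compose with the projection $X\x I^k\to K_i\x I^k$; this yields a map $X\x I^k\to D^{2+k}$ (the product of $2+k$ copies of $D$) whose point-inverses have diameter bounded by the mesh of the composite bonding map $X\to K_i$. As these meshes tend to $0$, I obtain $\eps$-maps into the single fixed target $D^{2+k}$ for every $\eps>0$, so $X\x I^k$ quasi-embeds in a product of $2+k$ dendrites, as required.

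The non-embeddability is the crux, and here one must be careful. Writing each curve $C_j$ as an inverse limit of trees expresses $C_1\x\dots\x C_{2+k}$ as an inverse limit of products of $2+k$ trees, but a topological embedding of $X\x I^k$ into this inverse limit yields only $\eps$-maps into the finite stages --- that is, exactly the quasi-embedding we already have, and hence no contradiction. The obstruction must therefore be a finer invariant of \emph{genuine} embeddings into products of curves, one insensitive to this tree-approximation. I would seek it by upgrading the fiber argument from the proof of (ii)\imp(iii) of Corollary \ref{characterization} to \v Cech cohomology and to the continuum setting: for an embedding into $C_1\x\dots\x C_{2+k}$ the cohomological fiber over a generic point of the last curve factor should again be $1$-dimensional and tree-like, carrying an \emph{end} that plays the role of a free face, and an analogue of Pontryagin's product-structure lemma should propagate a collapse across the remaining factors. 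Carried out honestly, this would show that any compactum embeddable in a product of $m$ curves is \emph{cohomologically collapsible} in a precise sense preserved under inverse limits, whereas $X$ is designed to violate exactly this.

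I expect the main obstacle to be the precise formulation and verification of this limit obstruction: isolating a single \v Cech-cohomological (or $\Z/2$-equivariant deleted-product) invariant that (i) vanishes for every compactum embedding in a product of $2+k$ curves --- a continuum strengthening of Theorem \ref{Patkowska} and of the Koyama--Krasinkiewicz--Spie\.z non-embedding result quoted in Remark \ref{contr-non-coll} --- and (ii) is arranged to be nonzero for our $X$ through the choice of bonding maps, all while $X$ remains a genuine AR. The last clause is itself delicate: cell-like inverse limits need not be ANRs, so the ANR property of $X$ must be secured by hand, for instance by taking the $K_i$ to be nested regular neighborhoods and the $p_i$ to be retractions, making $X$ locally contractible; triviality of shape then upgrades to contractibility and hence to $X$ being a $2$-dimensional compact AR. Balancing nonvanishing of the obstruction against local contractibility of the limit is, I believe, the heart of the matter.
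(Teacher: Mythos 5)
Your quasi-embedding half is sound and is essentially the paper's own argument (Proposition \ref{thm:quasi-emb}): the paper takes $X$ to be Skliarienko's compactum, the one-point compactification of the mapping telescope of $S^1\xr{2}S^1\xr{2}\cdots$, exhibits it as the inverse limit of collapsible $2$-polyhedra $X_i$, embeds each $X_i\x I^k$ (collapsible, hence covered by Theorem \ref{collapsible}) in a product of $2+k$ trees, and assembles those trees into $2+k$ dendrites (via clusters; your universal dendrite would do just as well). The gap is the entire non-embeddability half: you have neither a concrete $X$ nor a concrete obstruction, and you say as much yourself. Worse, your guiding requirement --- an invariant whose vanishing is ``preserved under inverse limits'' --- is self-defeating: since every finite stage $K_i\x I^k$ embeds in a product of $2+k$ trees, any obstruction that is determined by (i.e.\ continuous under) the finite stages necessarily vanishes on $X\x I^k$. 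The obstruction must live precisely in the \emph{failure} of continuity of \v Cech cohomology under inverse limits, i.e.\ in a derived limit.

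That is exactly what the paper does, and none of its ingredients appear in your sketch. The invariant is the local cohomology group $H^{3+k}(X\x I^k,\,X\x I^k\but\{(\infty,0)\})$ at the compactification point: by the exact sequence ($*$) it contains $\derlim$ of the inverse sequence $\cdots\xr{2}\Z\xr{2}\Z$ arising from the telescope, which is nonzero because that sequence fails the Mittag--Leffler condition. On the other side, for any product of $2+k$ local dendrites this local cohomology vanishes at every point: Lemma \ref{lem:1-dim} (local dendrites have vanishing top local cohomology, proved via a Mittag--Leffler/free-abelian-splitting argument), Lemma \ref{lem:product} (products), and Lemma \ref{lem:incl} (subcompacta) combine to give Theorem \ref{thm:non-emb}. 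Finally, the passage from curves to local dendrites is not your proposed generic-fiber/Pontryagin-lemma upgrade --- which is not carried out and whose feasibility is unclear --- but the Koyama--Krasinkiewicz--Spie\.z result (Theorem \ref{thm:kras}), applicable because $X\x I^k$ is a compact ANR. Relatedly, your plan for securing the AR property (``nested regular neighborhoods and retractions'') is left vague, whereas for the telescope compactification contractibility and local contractibility are verified directly; choosing $X$ concretely is what makes both the AR property and the nonvanishing of the obstruction checkable at all.
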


The proof of the higher-dimensional (i.e.\ $k\ge 1$) case is similar to (and only
three lines longer than) the proof of the two-dimensional case.
Similar arguments show that the Cartesian power $X^k$ quasi-embeds in a product
of $2k$ dendrites, but does not embed in any product of $2k$ curves.

\begin{remark}
A few months after we shared our proof of the two-dimensional case of
Theorem \ref{thm:main} with J. Krasinkiewicz and S. Spie\.z, they found their
own solution of Problem \ref{prob:kks} \cite{KS}.
Compared to ours, it is amazingly simple (modulo their previous work with
A. Koyama) --- at least when slightly modified as follows.

The dunce hat $D$ \cite{Ze} (also known as the Borsuk tube \cite{Bor}, \cite{KS})
is easily seen to be the quotient of a collapsible polyhedron $\hat D$ by its only
free edge.
Indeed, the link $L$ of the $0$-cell $e_0$ of $D$ is homeomorphic to
$S^1\x\partial I\cup pt\x I$, where $pt\in S^1$ (cf.\ \cite[Fig.\ 5]{Ze}).
Let $\pi\:L\to I$ be the projection.
The star $S$ of $e_0$ in some triangulation of $D$ is homeomorphic to the cone over
$L$, which can be viewed as the mapping cylinder of the constant map $L\to pt$;
we define $\hat D$ by replacing $S$ with the mapping cylinder $MC(\pi)$.
The target space $I$ of $\pi$ is identified with a free edge $J$ in $\hat D$, and
clearly $\hat D$ is collapsible.

The quotient map $\hat D\to \hat D/J=D$, being cell-like, is an $\eps$-homotopy
equivalence for each $\eps>0$ by Chernavsky's lemma \cite[Lemma 1]{KC}; in particular,
for each $\eps>0$ there exists an $\eps$-map $f_\eps\:D\to\hat D$.
(Specifically, $f_\eps$ is the identity outside $S$, and $f_\eps|_S$ is the
composition $S\xr{h}L\x I\underset{L\x\{1\}=L*\emptyset}\cup L*pt\xr{g}MC(\pi)$,
where $h$ is a homeomorphism such that $h^{-1}(L*pt)$ lies in the
$\frac\eps2$-neighborhood of $e_0$, and $g$ combines the quotient map
$L\x I\to MC(\pi)$ with a null-homotopy $L*pt\to I$ of $\pi$.)
Since $\hat D$ is collapsible, it embeds in a product of two trees (\cite{KKS};
see Corollary \ref{2d-case} above), so $D$ quasi-embeds there; on the other
hand, $D$ does not embed in any product of two curves since it is contractible
but not collapsible (\cite{KKS}; see Remark \ref{contr-non-coll} above).

As observed in \cite{KS}, similar arguments show that the Cartesian power $D^k$
quasi-embeds in a product of $2k$ trees, but does not embed in any product of $2k$
curves.
(This uses the more general result of \cite{KKS} that no polyhedron $P$ with
$\rk H^1(P)<n$ and $H^n(P,P\but\{x\})\ne 0$ for each $x\in P$ embeds in a product
of $n$ curves.)
\end{remark}

\begin{remark} Zeeman showed that $D\x I$ is collapsible \cite{Ze}, where $D$ is
the dunce hat.
Hence $D\x I$ embeds in a product of $3$ trees by Theorem \ref{collapsible}.
So the absolute retract $X$ in Theorem \ref{thm:main} cannot be replaced by $D$.
Moreover, it cannot be replaced by {\it any} $2$-polyhedron $R$, since $R\x I$
embeds in a product of $3$ trees by Proposition \ref{cone}.
\end{remark}

\begin{conjecture}\label{conj} (a) If a compact $n$-polyhedron $P$ quasi-embeds in
a product of $n$ dendrites, then $P\x I$ embeds in a product of $n+1$ trees.

(b) Same if $P$ is a co-locally contractible (see \S\ref{co-local}) $n$-dimensional
compactum.
\end{conjecture}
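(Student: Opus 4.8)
The plan is to separate the two parts, because for a polyhedron the conclusion is already unconditional. For part (a), note that $P\x I$ collapses onto $P\x\{0\}\cong P$, which is an $n$-polyhedron; hence by Corollary \ref{characterization}, applied in dimension $n+1$, the polyhedron $P\x I$ PL embeds in a product of $n+1$ trees --- and the quasi-embedding hypothesis is never used. (This is the same observation that underlies the remark that $R\x I$ embeds in a product of three trees for every $2$-polyhedron $R$.) So the whole difficulty is concentrated in part (b), where $P$ is only a co-locally contractible $n$-dimensional compactum, $P\x I$ is not a polyhedron, and collapsing is no longer available.

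For (b) I would first turn the quasi-embedding into finite-tree data. Fixing a metric on $P$ and a sequence $\eps_k\searrow 0$, choose $\eps_k$-maps $f_k\:P\to\prod_{i=1}^n D_i$. Each dendrite $D_i$ carries retractions onto finite subtrees that converge uniformly to the identity, so after post-composing I may replace $f_k$ by a map $g_k\:P\to\prod_{i=1}^n T_i^{(k)}$ into a product of finite trees $T_i^{(k)}\i D_i$, still an $\eps_k'$-map with $\eps_k'\to 0$. This reduces (b) to building, out of arbitrarily fine approximations of $P$ by maps into products of $n$ finite trees, an honest embedding of $P\x I$ into a product of $n+1$ finite trees.

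The natural mechanism is a mapping telescope along the $I$-factor. I would pick points $t_k\searrow 0$ with $t_0=1$, place $g_k$ on the slice $P\x\{t_k\}$, interpolate between $g_k$ and $g_{k+1}$ over $[t_{k+1},t_k]$, and let the $(n+1)$st coordinate essentially record $t$, so that slices with distinct $t>0$ are automatically separated. The first $n$ coordinates should be made to land in fixed finite trees $\mathcal T_i\i D_i$ into which the $T_i^{(k)}$ have been amalgamated. Co-local contractibility of $P$ is what I would invoke to choose the interpolations coherently --- in the spirit of the cell-like map $\hat D\to D$ used earlier --- so as to unlink the point-preimages of the $g_k$ and keep the total map $F\:P\x I\to(\prod_i\mathcal T_i)\x T_{n+1}$ injective.

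The main obstacle is the accumulation slice. The infinitely many levels must crowd into the compact interval, accumulating (say) at $t=0$, and there the $(n+1)$st coordinate is pinned to an endpoint of a finite tree; so $F|_{P\x\{0\}}$ is forced to embed $P$ into a product of only finitely-branching trees --- exactly what quasi-embeddability does not provide, since $P$ need not embed in any product of $n$ trees at all. Thus the resolution of non-injectivity cannot be deferred to the limit: it must be spread across all the interpolations so that the limiting slice is still embedded, while simultaneously the subtrees $T_i^{(k)}$ must nest or converge in $D_i$ tightly enough to fit into the finite trees $\mathcal T_i$. Reconciling these two demands is precisely where the argument resists completion, and is the reason the statement is offered only as a conjecture.
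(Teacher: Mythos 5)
This statement is a \emph{conjecture}; the paper offers no proof of it, so there is no ``paper's own proof'' to measure your attempt against. What can be checked is your claim about part (a), and it is correct: $P\times I$ collapses onto $P\times\{0\}\cong P$, so applying Corollary \ref{characterization}, (iii)$\Rightarrow$(i), to the compact $(n+1)$-polyhedron $P\times I$ yields a PL embedding into a product of $n+1$ trees, and the quasi-embedding hypothesis is indeed never used. This is also how the paper itself views the polyhedral case: the remark immediately preceding the conjecture observes (for $n=2$, via Proposition \ref{cone}, i.e.\ via $R\times I\subset CR$) that $R\times I$ embeds in a product of three trees for \emph{any} $2$-polyhedron $R$, and the same argument works for all $n$. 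So part (a) holds unconditionally; its role in the conjecture is evidently to record the polyhedral instance of which (b) is the proposed generalization.

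Part (b) is the genuinely open assertion, and you rightly do not claim to prove it. Your telescope sketch and the obstacle you isolate are sensible: in any such construction the limiting slice $P\times\{0\}$ has its $(n+1)$st coordinate pinned, so that slice would have to be embedded by the first $n$ coordinates alone, which quasi-embeddability does not supply. It is also worth noting where co-local contractibility actually enters the paper: by the Proposition of \S\ref{co-local} it forces $H^{n+1}(X,\,X\setminus\{x\})=0$, i.e.\ it kills precisely the local cohomology obstruction that Theorem \ref{thm:non-emb} uses against the Skliarienko compactum; conjecture (b) asserts, in effect, that this is the only obstruction, and no mechanism for converting the vanishing of that obstruction into an actual embedding is known. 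In short: your treatment of (a) is correct and coincides with the paper's own observation, and your assessment of (b) --- that it resists completion and remains conjectural --- is exactly the state of affairs.
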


Theorem \ref{thm:main} should be compared with the following results.

\begin{theorem}[Melikhov--Shchepin \cite{MS}]
(a) If $X$ is a compact $n$-dimensional ANR that quasi-embeds in $I^{2n-1}$, $n>3$,
then $X\x I$ embeds in $I^{2n}$.

(b) If $X$ is an acyclic $n$-dimensional compactum, $m>\frac{3(n+1)}2$ and $k>0$,
then the following are equivalent: (i) $X$ embeds in $I^m$; (ii) $X\x I^k$
embeds in $I^{m+k}$; (iii) $X\x T^k$ embeds in $I^{m+2k}$, where $T$
denotes the triod.
\end{theorem}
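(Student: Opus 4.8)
The natural framework is the deleted-product, i.e.\ equivariant, criterion for embeddability. For a compactum $Y$ let $\widetilde Y=\{(y,y')\in Y\x Y:y\ne y'\}$, equipped with the free involution that swaps the two coordinates, and give each sphere $S^q$ the antipodal involution. By the Haefliger--Weber theorem and its extension to compacta, in the metastable range $2m\ge3(n+1)$ an $n$-dimensional compactum $Y$ quasi-embeds in $I^m$ if and only if there is a $\Z/2$-map $\widetilde Y\to S^{m-1}$, and for an $n$-dimensional ANR the equivariant map can be upgraded to a genuine embedding. The plan is to reduce every clause of (a) and (b) to the existence of such equivariant maps, exploiting that acyclicity of $X$ (in (b)) and local contractibility (in (a)) keep the governing cohomological obstruction computable.

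For (b) the central computation is the equivariant homotopy type of the deleted product of a Cartesian product. Writing $\widetilde{X\x Z}=\bigl(\widetilde X\x Z^2\bigr)\cup\bigl(X^2\x\widetilde Z\bigr)$ with intersection $\widetilde X\x\widetilde Z$ exhibits $\widetilde{X\x Z}$ as the equivariant homotopy pushout of $\widetilde X\x Z^2\leftarrow\widetilde X\x\widetilde Z\to X^2\x\widetilde Z$. If $Z$ is contractible and $X$ is acyclic, then $Z^2$ is equivariantly contractible and $X^2$ is equivariantly acyclic, so at the level of (Borel) equivariant cohomology the pushout reduces to that of $\widetilde X\xleftarrow{\ \mathrm{pr}\ }\widetilde X\x\widetilde Z\xrightarrow{\ \mathrm{pr}\ }\widetilde Z$, namely the equivariant join $\widetilde X*\widetilde Z$. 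Since $\widetilde{I^k}\simeq_{\Z/2}S^{k-1}$ (via $(u,v)\mapsto(u-v)/|u-v|$) and $\widetilde T\simeq_{\Z/2}S^1$ (the ordered configuration space of two points on the triod retracts equivariantly onto a circle carrying the antipodal action; equivalently, $T$ embeds in $I^2$ but not in $I^1$), one gets $\widetilde{X\x I^k}\simeq_{\Z/2}\widetilde X*S^{k-1}$ and $\widetilde{X\x T^k}\simeq_{\Z/2}\widetilde X*S^{2k-1}$. Now (i)\imp(ii) and (i)\imp(iii) are immediate (take the product of an embedding $X\emb I^m$ with $I^k$, resp.\ with $T^k\emb I^{2k}$). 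The content is the converse destabilization: an embedding of the product yields a $\Z/2$-map $\widetilde X*S^{\ell-1}\to S^{m+\ell-1}$ (with $\ell=k$, resp.\ $\ell=2k$), and the equivariant join lemma says that such a map exists if and only if a $\Z/2$-map $\widetilde X\to S^{m-1}$ does, because joining with $S^{\ell-1}$ raises the cohomological $\Z/2$-index of both source and target by the same $\ell$. Feeding $\widetilde X\to S^{m-1}$ back into Haefliger--Weber for $X$ in $I^m$ (metastable by the hypothesis $2m>3(n+1)$) recovers (i). Note this is the only place the metastable range is invoked, and it is needed only for $X$ in $I^m$, never for the high-dimensional products.

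Part (a) runs the same machine with $Z=I$, so $\widetilde{X\x I}\simeq_{\Z/2}\widetilde X*S^0$. A quasi-embedding of $X$ in $I^{2n-1}$ produces a $\Z/2$-map $\widetilde X\to S^{2n-2}$, whence $\widetilde{X\x I}\simeq_{\Z/2}\widetilde X*S^0\to S^{2n-2}*S^0=S^{2n-1}$, exactly the equivariant map required for a map $X\x I\to I^{2n}$. The difficulty is that for the small cases $n=4,5$ the pair $(X\x I,\,I^{2n})$ lies just below the metastable range ($4n$ versus $3(n+2)=3n+6$), so the bare existence of the equivariant map does not, on its own, deliver an embedding. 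This is where the ANR hypothesis is spent: one realizes the equivariant map by an actual embedding of $X\x I$ using the local-contractibility input of Weber-type theory for ANRs, together with the single extra coordinate supplied by the stabilizing factor $I$, rather than by the clean polyhedral Haefliger--Weber dictionary alone.

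I expect two main obstacles. The first is the equivariant join lemma itself: one must track the diagonal $\Z/2$-action on $\widetilde X*S^{\ell-1}$ (antipodal on the sphere factor, coordinate-swap on $\widetilde X$) and verify that joining shifts the governing obstruction by an isomorphism, so that no secondary obstructions intervene in the relevant range; acyclicity of $X$ is what keeps these obstruction groups under control. The second, and harder, is the below-metastable realization in (a): converting the equivariant map into a genuine embedding of $X\x I$ when the dimension budget only barely fails, which forces a direct use of the ANR structure and the geometry of the stabilizing interval beyond pure obstruction theory. The embedding-versus-quasi-embedding distinction (the subject of \cite{SSS}) is the recurring technical hazard throughout, and is the reason the hypotheses single out ANRs and acyclic compacta.
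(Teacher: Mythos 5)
You should know at the outset that the paper you were given contains no proof of this statement: it is quoted verbatim from Melikhov--Shchepin \cite{MS}, whose argument is the geometric ``telescope'' technique advertised in that paper's title --- one represents the compactum as an inverse limit of compact polyhedra, embeds the associated mapping telescope, and then stabilizes or compresses that embedding --- not deleted-product obstruction theory. So there is nothing internal to compare your plan against; what can be assessed is whether your plan would work, and it has a genuine gap at its load-bearing step.

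The gap is the final step of both parts: the claim that, in the metastable range, an equivariant map $\widetilde X\to S^{m-1}$ can be ``upgraded to a genuine embedding'' when $X$ is an ANR or an acyclic compactum. No such theorem exists, and the ANR version of it is false. For compacta, the Haefliger--Weber criterion (in Skopenkov's extension) governs \emph{quasi}-embeddability only; the discrepancy between the deleted-product obstruction and actual embeddability is exactly the subject of \cite{SSS}, and the last theorem quoted in the present paper --- also from \cite{MS} --- exhibits, for each $n>1$, a compact $n$-dimensional ANR that quasi-embeds in $I^{2n}$ (hence carries the equivariant map, in the metastable range once $n\ge 3$) yet does not embed there. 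Consequently, in part (b) your pipeline --- embedding of the product, equivariant map on its deleted product, desuspension to $\widetilde X\to S^{m-1}$ --- can only conclude that $X$ \emph{quasi}-embeds in $I^m$; getting from there to statement (i) is precisely the content of the theorem, not a citation to Haefliger--Weber. Indeed, if your claimed upgrade held for acyclic compacta, part (b) would follow trivially and in the much stronger form ``$X$ quasi-embeds in $I^m$ implies $X$ embeds in $I^m$,'' which is exactly what \cite{MS} could not prove and had to route around with the telescope construction. In part (a) you concede the realization problem yourself (``this is where the ANR hypothesis is spent'') and defer it to unspecified ``Weber-type theory for ANRs''; that deferred step \emph{is} the theorem, and note additionally that for $n=4$ the pair $(X,I^{2n-1})$ already sits below the metastable range ($2(2n-1)=14<15=3(n+1)$), so even the dictionary between quasi-embeddings and equivariant maps is unavailable from the start. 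By contrast, the parts of your proposal that are fine are the routine ones: the implications (i)$\Rightarrow$(ii) and (i)$\Rightarrow$(iii) are trivial, the identification $\widetilde T\simeq_{\Z/2}S^1$ for the triod is classical, and the equivariant desuspension you call the ``join lemma'' is plausible in this range by fiberwise Freudenthal-type obstruction theory (though for deleted products of compacta, which are non-compact and not complexes, even that requires the \v Cech-equivariant framework of Skopenkov rather than a bare reference). The proposal therefore reduces the theorem to a statement that is unproved in the acyclic case and refuted in the ANR case, which is why it cannot be repaired within the obstruction-theoretic framework alone.
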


In conclusion we note that the proof of non-embeddability in Theorem \ref{thm:main} involves the same kinds of local geometry and local algebra as that in the following

\begin{theorem}[Melikhov--Shchepin \cite{MS}] For each $n>1$ there exists
a compact $n$-dimensional ANR, quasi-embeddable but not embeddable in $I^{2n}$.
\end{theorem}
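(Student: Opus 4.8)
The plan is to imitate the dunce-hat mechanism of the preceding remark, replacing ``product of two trees'' by $I^{2n}$ and the collapsibility obstruction by the van Kampen (deleted-product) obstruction. First I would construct, for each $n>1$, a compact $n$-dimensional ANR $X=X_n$ together with an $n$-polyhedron $\hat X$ and a cell-like surjection $q\colon\hat X\to X$ such that $\hat X$ embeds in $I^{2n}$. A convenient source for $\hat X$ is a collapsible $n$-polyhedron, which embeds in a product of $n$ trees by Theorem \ref{collapsible}, and hence in $I^{2n}$ (each tree embeds in $I^2$). The map $q$ is then arranged to crush a carefully chosen subpolyhedron along cell-like fibres so that $X$ acquires, at each point $x$ of a prescribed subset, a nonvanishing local cohomology group $H^n(X,X\but\{x\})\ne0$, while $X$ itself remains acyclic. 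The point of the construction is to concentrate the failure of embeddability near the diagonal of $X\x X$; making $X$ an ANR (rather than merely a cell-like image, which could raise dimension) requires realising $X$ as an adjunction space built from mapping cylinders, exactly as $\hat D$ was built from $MC(\pi)$ in the remark above.

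Quasi-embeddability is then formal. Since $q$ is cell-like, Chernavsky's lemma \cite{KC} shows it is an $\eps$-homotopy equivalence for every $\eps>0$, so there are $\eps$-maps $f_\eps\colon X\to\hat X$; composing with a fixed embedding $\hat X\emb I^{2n}$ yields $\eps$-maps $X\to I^{2n}$ for all $\eps>0$, i.e.\ $X$ quasi-embeds in $I^{2n}$. This is verbatim the argument used for the dunce hat. In deleted-product language, these $\eps$-maps furnish, for each $\eps>0$, a $\Z_2$-equivariant map of the $\eps$-thickened deleted product $\{(a,b):d(a,b)\ge\eps\}$ into $S^{2n-1}$; thus the van Kampen obstruction of $X$ restricts to zero on the complement of every neighbourhood of the diagonal.

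The heart of the matter is non-embeddability, and this is where the ``local geometry and local algebra'' enter. Suppose $X\i S^{2n}$. I would consider the equivariant van Kampen obstruction $\vartheta(X)\in H^{2n}_{\Z_2}(X^*;\Z_w)$ living in the equivariant \v{C}ech cohomology of the deleted product $X^*=\{(a,b)\in X\x X:a\ne b\}$, with appropriately twisted integer coefficients $\Z_w$. An embedding $X\emb\R^{2n}$ produces the equivariant map $(a,b)\mapsto\frac{f(a)-f(b)}{|f(a)-f(b)|}$ into $S^{2n-1}$ and forces $\vartheta(X)=0$. By the previous paragraph $\vartheta(X)$ is supported in every neighbourhood of the diagonal, so it is a genuinely \emph{local} class; I would compute it from the local data $H^n(X,X\but\{x\})\ne0$ by a Leray spectral sequence (or Mayer--Vietoris) analysis of the projection $X^*\to X$ near the diagonal, and show that the accumulated local contributions add up to a nonzero element. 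Equivalently, and more geometrically, the local $n$-cycles carried by $H^n(X,X\but\{x\})$ at distinct nearby points must link in $S^{2n}$, and Alexander duality together with the acyclicity of $X$ forbids the existence of the $(n-1)$-spheres in $S^{2n}\but X$ that would be needed to unlink them.

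The main obstacle is precisely this last step: proving that the obstruction class is nonzero, and that its nonvanishing genuinely obstructs embedding (as opposed to merely quasi-embedding). Completeness of the van Kampen obstruction in the metastable range $2(2n)\ge3(n+1)$, i.e.\ $n\ge3$, is available through the Haefliger--Weber theory and its extension to ANRs, so for $n\ge3$ non-embeddability follows once $\vartheta(X)\ne0$ is established; the case $n=2$, outside the metastable range, would instead be handled by a direct linking argument of the kind indicated above, parallel to the local-cohomology obstruction ($H^n(X,X\but\{x\})\ne0$ for all $x$) used for products of curves in the proof of Theorem \ref{thm:main}. Finally one checks the bookkeeping, namely $\dim X=n$ and that $X$ is a compact ANR, both of which are built into the adjunction-space construction of the first step.
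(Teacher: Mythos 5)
First, a point of order: the present paper does not prove this theorem at all --- it is quoted from \cite{MS} only to point out that the non-embeddability argument for Theorem \ref{thm:main} ``involves the same kinds of local geometry and local algebra''. So your proposal can only be judged against the method of \cite{MS} as it is reflected in \S\S3--4 here, and judged that way it has a fatal structural flaw: \emph{your space $X$ is a compact polyhedron, and no compact polyhedron can be such an example for $n\ge 3$}. Mapping cylinders of PL maps and adjunction spaces of compact polyhedra along PL maps are again compact polyhedra (this is why the dunce hat $D=\hat D/J$ is itself a polyhedron), so the space built ``from mapping cylinders, exactly as $\hat D$'' by finitely many such operations is a compact $n$-polyhedron $P$. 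For a polyhedron the relevant deleted product is the \emph{compact} simplicial one, $\tilde P=\bigcup\{\sigma\times\tau:\sigma\cap\tau=\emptyset\}$; an $\eps$-map with $\eps$ smaller than the least distance between disjoint closed simplices is injective on pairs in $\tilde P$ and hence yields an equivariant map $\tilde P\to S^{2n-1}$. Thus quasi-embeddability of a polyhedron already forces your obstruction $\vartheta$ to vanish --- your two goals (quasi-embed $X$, prove $\vartheta(X)\ne 0$) are mutually exclusive --- and worse, by Weber's metastable-range theorem ($2\cdot 2n\ge 3(n+1)$, i.e.\ $n\ge3$, the very range you cite) the equivariant map gives back a PL embedding $P\hookrightarrow\R^{2n}$. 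This is exactly the point of \cite{SSS}: for polyhedra the embeddability/quasi-embeddability gap lives only outside the metastable range. The example must therefore be genuinely non-polyhedral: an inverse-limit, compactified-telescope object like (the $n$-dimensional analogue of) the Skliarienko compactum of \S4, where infinitely many ``crushed'' stages accumulate at one point, and whose quasi-embeddability comes from Theorem \ref{collapsible} applied to the finite collapsible stages exactly as in Proposition \ref{thm:quasi-emb}.

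The obstruction half of your plan has matching gaps. The invariant you name, $H^n(X,X\setminus\{x\})\ne 0$, holds at every manifold point and obstructs nothing ($S^n\subset\R^{n+1}\subset\R^{2n}$); the meaningful group is $H^{n+1}(X,X\setminus\{x\})$, which vanishes identically on polyhedra (a point of an $n$-polyhedron has a cone neighborhood $x*L$ with $\dim L\le n-1$, so $H^{n+1}(P,P\setminus\{x\})\cong\tilde H^{n}(L)=0$) --- another reason your crushing construction cannot create it; only inverse-limit behavior can, via derived limits of non--Mittag-Leffler sequences such as $\cdots\xrightarrow{2}\Z\xrightarrow{2}\Z$ as in \S\S3--4. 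Moreover, even $H^{n+1}(X,X\setminus\{x\})\ne0$ is not by itself an obstruction to embedding in $I^{2n}$: the mechanism of Lemmas \ref{lem:product}, \ref{lem:incl} and \ref{lem:1-dim} requires the ambient product of curves to have the \emph{same} dimension as $X$, and says nothing when the ambient $I^{2n}$ has dimension $2n$. Finally, for any quasi-embeddable compactum the van Kampen class restricts to zero on every compact truncation $\{(a,b):d(a,b)\ge\eps\}$, so it can be nonzero only as a phantom class living in a $\derlim$ term of a Milnor sequence; an honest ``Leray spectral sequence near the diagonal'' computation cannot detect such a class, and the ``extension of Haefliger--Weber completeness to ANRs'' you appeal to is precisely the statement whose failure the theorem asserts. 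What \cite{MS} actually has to do --- and what remains entirely open in your sketch --- is to combine this derived-limit algebra with genuinely global linking/duality in $S^{2n}$; none of your listed steps supplies that argument.
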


\section{Collapsible polyhedra}

We use the following combinatorial notation \cite[Chapter 2]{M3}.
Given a poset $P$ and a $\sigma\in P$, the {\it cone} $\fll\sigma\flr$
is the subposet of all $\tau\in P$ such that $\tau\le\sigma$, and the
{\it dual cone} $\cel\sigma\cer$ is the subposet of all $\tau\in P$ such that
$\tau\ge\sigma$.
The {\it link}
$\lk(\sigma,P)$ is the subposet of all $\tau\in P$ such that
$\tau>\sigma$, and the {\it star} $\st(\sigma,P)$ is the subposet of all
$\rho\in P$ such that $\rho\le\tau$ for some $\tau\in\cel\sigma\cer$.
If $K$ is a simplicial complex (viewed as a poset of nonempty faces ordered by
inclusion), and $\sigma\in K$, then $\lk(\sigma,K)$ is a simplicial complex,
and $\st(\sigma,K)$ is isomorphic to $\fll\sigma\flr*\lk(\sigma,K)$.%
\footnote{Our $\lk(\sigma,P)$ is a standard notion of link in modern Topological
Combinatorics; we shall need it when $P$ is a cubical complex (where every cone
is isomorphic to the poset of nonempty faces of a cube).
The notion of link in Combinatorial Topology of 1960s was something slightly
different: being defined only when $P$ is a simplicial complex, it is canonically
{\it isomorphic} to our $\lk(\sigma,P)$ but is not {\it identical} with it.}

Here the join is defined as follows.
The {\it dual cone} $C^*P$ of the poset $P$ consists of $P$ together with
an additional element $\hat 0$ that is set to be less than every element of $P$.
The {\it coboundary} $\partial^*Q$ of a dual cone $Q=C^*P$, is the
original poset $P$.
(Note the relation with coboundary of cochains.)
The {\it product} $P\x Q$ of two posets consists of pairs $(p,q)$, where
$p\in P$ and $q\in Q$, ordered by $(p,q)\le (p',q')$ if $p\le q$ and
$p'\le q'$.
The {\it join} $P*Q=\partial^*(C^*P\x C^*Q)$.
Note that $P*Q=C^*P\x Q\cup P\x C^*Q$ (union along $P\x Q$).

The {\it canonical subdivision} $P^\#$ is the poset of all order intervals
of $P$, ordered by inclusion.
If $K$ is a simplicial complex, then $(C^*K)^\#$ is a cubical complex.
Conversely, if $Q$ is a cubical complex and $q\in Q$, then $\lk(q,Q)$ is
a simplicial complex, and $\st(q,Q)$ is isomorphic to
$\fll q\flr\x(C^*\lk(q,Q))^\#$.
Moreover, $\lk((p,q),P\x Q)$ is isomorphic to $\lk(p,P)*\lk(q,Q)$.
The details can be found in \cite[Chapter 2]{M3}.

Theorem \ref{collapsible} now follows from

\begin{lemma}\label{main lemma}
Let $K\searrow L$ be a simplicial collapse of simplicial complexes
and let $T_1,\dots,T_n$ be trees, so that $T=T_1\x\dots\x T_n$ is a cubical complex.
Suppose that $f\:|L|\to |T|$ is a PL embedding such that $f(|\sigma|)$ is cubulated
by a subcomplex of $T$ for every simplex $\sigma$ of $L$.
Then each $T_i$ embeds in a larger tree $\tilde T_i$ and $f$ extends to
a PL embedding $\bar f\:|K|\to|\tilde T|$, where
$\tilde T=\tilde T_1\x\dots\x\tilde T_i$, such that $\bar f(|\sigma|)$ is cubulated
by a subcomplex of $\tilde T$ for every simplex $\sigma$ of $K$.

Moreover, $|T|\cap\bar f(|K|)=f(|L|)$, and $|\tilde T|$ collapses onto $|T|\cup\bar f(|K|)$.
\end{lemma}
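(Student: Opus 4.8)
The plan is to induct on the number of elementary collapses in $K\searrow L$, so that it suffices to treat a single elementary expansion $L\nearrow K$, where $K=L\cup\{\sigma,\tau\}$ with $\tau=\sigma*v$ a $d$-simplex, $\sigma$ its free $(d-1)$-face, and $v$ the opposite vertex; note $d\le n$ since $\dim K\le n$. The two ``moreover'' clauses propagate through the induction: one step produces $\tilde T\supseteq T$ and an extension whose new material lies in $|\tilde T|\setminus|T|$, while every later step only adjoins further material outside the previous $|\tilde T|$. Hence the intersection with the original $|T|$ is never enlarged beyond $f(|L|)$, and the successive collapses concatenate (each later collapse takes place away from the earlier images), giving $|\tilde T|\searrow|T|\cup\bar f(|K|)$ in the end.

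For the elementary step, note that $f$ is already defined on $|L|\supseteq|(\partial\sigma)*v|$, so we have a cubical $(d-1)$-ball $B:=f(|(\partial\sigma)*v|)\i|T|$ whose boundary is the cubical $(d-2)$-sphere $f(|\partial\sigma|)$. The task is to construct a cubical $d$-ball $\bar f(|\tau|)$ meeting $|T|$ \emph{exactly} in $B$, with $\partial\bar f(|\tau|)=B\cup\bar f(|\sigma|)$, where the new face $\bar f(|\sigma|)$ is a $(d-1)$-ball with boundary $f(|\partial\sigma|)$ whose interior lies in $|\tilde T|\but|T|$. In short, I must cap $B$ off cubically, from outside $T$, in a suitable enlargement of the trees.

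The crux is to translate this into the colored simplicial combinatorics of \S2 and invoke the Fisk--Izmestiev--Witte lemma. A cubical complex in $T=T_1\x\dots\x T_n$ is canonically $n$-colored by coloring each edge with the index of the tree-factor in which it is nondegenerate (two edges of a common square lie in distinct factors, so no cube-edge is monochromatic), and the link $\lk(q,T)$ of a vertex is an $n$-colored simplicial complex. Under this dictionary, capping $B$ amounts to adjoining to such a link an $n$-colored $(d-1)$-ball $\hat D$ filling the $n$-colored $(d-2)$-sphere $\Sigma$ that encodes $\partial\sigma$. I would apply the Fisk--Izmestiev--Witte lemma with palette the $n$ colors: working at the link level the sphere to be filled is only $(d-2)$-dimensional, so the requirement $\#C\ge (d-2)+2=d$ is met by $\#C=n\ge d$ (whereas filling the $(d-1)$-sphere directly would demand $n+1$ colors). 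This yields $\hat D$, which I then realize cubically by enlarging the trees: each new colored simplex of $\hat D$ dictates a new cube, obtained by attaching a new edge of the corresponding color to the relevant $T_i$ at the relevant vertex (a tree stays a tree under edge-attachment, and several edges of one color may be attached at one vertex). Choosing these edges so that every new cube has interior disjoint from $|T|$ produces $\bar f(|\tau|)$ (the cubical cone over $\hat D$) and $\bar f(|\sigma|)$ (a copy of $\hat D$), attached to $|T|$ precisely along $B$.

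It then remains to verify that $\bar f$ restricts to $f$ on $|L|$ and to PL homeomorphisms onto cubical balls on $|\sigma|$ and $|\tau|$, hence is a PL embedding with every $\bar f(|\rho|)$ cubulated; that the new material lies outside $|T|$, giving $|T|\cap\bar f(|K|)=f(|L|)$; and that $|\tilde T|$ collapses onto $|T|\cup\bar f(|K|)$, using that each $\tilde T_i\searrow T_i$ (collapsing the new whiskers from their free endpoints) and that collapse is preserved under products, arranged so as to retain the cap. The \textbf{main obstacle} is the third paragraph: making the cubical-to-colored-simplicial dictionary precise enough to apply Fisk--Izmestiev--Witte, and then realizing the abstract colored ball $\hat D$ as an honest subcomplex of a concrete product $\tilde T=\tilde T_1\x\dots\x\tilde T_n$ — adding exactly the right colored edges to the right trees at the right vertices so that the cap embeds, stays outside $|T|$, and the enlarged product still collapses onto $|T|\cup\bar f(|K|)$. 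The dimension–palette bookkeeping $\#C=n\ge d$, forced by working at the link level, is exactly what keeps the lemma applicable in top dimension $d=n$.
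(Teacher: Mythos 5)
Your proposal gets the framing right: the reduction to a single elementary collapse, the identification of the frontier ball $B=f(|(\partial\sigma)*v|)$ as the thing to be capped, the coloring of links in $T_1\x\dots\x T_n$ by the index of the tree factor, and the observation that passing to links is what makes the Fisk--Izmestiev--Witte numerology $n\ge d$ work. But the core of your construction --- \emph{one} application of Fisk--Izmestiev--Witte at the link of a single vertex, filling ``the $(d-2)$-sphere $\Sigma$ that encodes $\partial\sigma$'', followed by taking ``the cubical cone over $\hat D$'' --- does not work, and it is exactly the obstacle you flag without resolving. The cubical sphere $f(|\partial\sigma|)$ and the ball $B$ are spread over many cubes of $T$; they are not contained in the star of any one vertex, so no single link $\lk(q,T)$ sees them, and no cone attached at one vertex of $T$ can have all of the $(d-1)$-ball $B$ in its boundary. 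Already for $d=2$, capping an arc $B$ of $e>1$ edges requires attaching new edges to the trees at \emph{every} interior vertex of the arc; and if $B$ is a single edge (or, in higher dimensions, a single cube), there are no interior vertices at all, so one is forced to work at the link of a positive-dimensional face $q$, where the available palette is not $n$ colors but only $n-\dim q$ of them --- a case your vertex-only dictionary never confronts.

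What the paper actually does, and what is missing from your sketch, is an iteration over \emph{all} faces $q=q_1\x\dots\x q_n$ of $B\but\partial B$, of all dimensions. At each such face the palette is $C$, the set of indices $i$ with $q_i$ a vertex, so $\#C=n-\dim q$; the sphere filled by Fisk--Izmestiev--Witte is $\lk(q,B)$ (the link in the \emph{current} ball, of dimension $k-\dim q-1$ where $k=\dim B=d-1$), and the hypothesis $\#C\ge\dim\lk(q,B)+2$ again reduces to $n\ge d$ --- your numerology, but needed at every face. Each application yields a colored ball $D$, realized by attaching whiskers to $T_i$ at the vertex $q_i$ for $i\in C$, and contributes the local prism piece $E=\fll q\flr\x(C^*D)^\#$ inside $\st(q,T^+)$; the cap $\beta$ is the \emph{union} of these pieces over all steps, not a cone. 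Assembling them needs genuine bookkeeping: after processing $q$ one replaces $B$ by $(B\but\cel q\cer)\cup F$ and chooses the next face among faces of the original $B_0\but\partial B_0$ that survive in the new ball; this guarantees termination, the equality $\beta\cap Q_0=B_0$ (so that $|T|\cap\bar f(|K|)=f(|L|)$), and it localizes the collapse of $|\tilde T|$ onto $|T|\cup\bar f(|K|)$ into per-step conewise collapses $|T^+|\searrow|T\cup Q^+|$. Your alternative argument for that collapse (``each $\tilde T_i\searrow T_i$ and collapse is preserved under products'') would collapse $|\tilde T|$ onto $|T|$, destroying the cap, so it cannot be ``arranged to retain'' it without essentially redoing the paper's step-by-step argument. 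As it stands, the proposal is missing the main construction of the proof.
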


\begin{proof}
Arguing by induction, we may assume that $K\searrow L$ is an elementary
simplicial collapse.
Let $Q$ denote the subcomplex of $T$ cubulating $f(|L|)$, and let $B$ be
the subcomplex of $Q$ cubulating the image of the topological frontier
of $|L|$ in $|K|$.
We may now forget $K$, $L$ and $f$, remembering only that $|B|$ is a PL ball of
some dimension $k<n$.
We thus want to construct trees $\tilde T_i\supset T_i$ and a subcomplex $\beta$ of
$\tilde T_1\x\dots\x\tilde T_n$ such that $\beta\cap Q=B$ and $|\beta|$ is
a PL $(k+1)$-ball.

The boundary of $|B|$ is cubulated by a subcomplex $\partial B$ of $B$.
Given a face $q=q_1\x\dots\x q_n$ of $B\but\partial B$, we have
$\lk(q,T)\simeq\lk(q_1,T_1)*\dots*\lk(q_n,T_n)$.
Each $q_i$ is either a vertex or an edge, and then $\lk(q_i,T_i)$ is
either a finite set or the empty set, accordingly.
Let $C$ be set of those $i$ for which $q_i$ is a vertex.
Then the cube $\fll q\flr$ is of dimension $n-\#C$, and consequently
the dimension $d-1$ of $\lk(q,B)$ equals $k-n+\#C-1<\#C-1$.

Every vertex $v$ of $\lk(q,T)$ lies in $\lk(q_i,T_i)$ for some $i\in C$;
in that case let us color $v$ by the $i$th color.
In particular, the subcomplexes $\Lambda:=\lk(q,Q)$ and $S:=\lk(q,B)$ of $\lk(q,T)$
are $C$-colored.
Since $\#C>d$, by the Fisk--Izmestiev--Witte lemma, the $C$-colored
combinatorial $(d-1)$-sphere $S$ bounds (abstractly) a $C$-colored combinatorial
ball $D$.
Let $\Lambda^+$ be the amalgamated union $\Lambda\cup_S D$, that is, the pushout of
the diagram $\Lambda\supset S\subset D$ in the category of $C$-colored
simplicial complexes and color-preserving simplicial maps.

If $D\but S$ contains $k_i$ vertices of color $i$, we define a new tree
$T_i^+=T_i\cup (q_i*[k_i])$ by attaching $k_i$ new edges to $T_i$ at the vertex $q_i$
for each $i\in C$ (note that $[k_i]=\emptyset$ and so $T_i^+=T_i$ for each $i\notin C$).
Let $T^+=T_1^+\x\dots\x T_n^+$.
The $C$-coloring of the vertices of $\lk(q,T)$ extends to the similarly defined
$C$-coloring of the vertices of $\lk(q,T^+)$.
Then any color-preserving identification of the vertices of $D\but S$ with
the vertices of $\lk(q,T^+)$ that are not in $\lk(q,T)$ extends uniquely to
a color-preserving simplicial map $\Lambda^+\to\lk(q,T^+)$ that extends
the inclusion $\Lambda\subset\lk(q,T)$.
This simplicial map is injective on vertices, hence is an embedding.
By construction, $\Lambda^+\cap\lk(q,T)=\Lambda$.

In particular, $D$ is now identified with a subcomplex of $\lk(q,T)$, hence
$E:=\fll q\flr\x(C^*D)^\#$ and $F:=\fll q\flr\x D^\#\cup(\partial\fll q\flr)\x (C^*D)^\#$
are identified with subcomplexes of $\fll q\flr\x (C^*\lk(q,T^+))^\#=\st(q,T^+)$.
Since $D\cap\Lambda=S$, we have $E\cap Q=E\cap B$.
Let $Q^+=Q\cup E$.
Note that $E\cap B$ is the cubical combinatorial $k$-ball
$\st(q,B)=\fll q\flr\x(C^*\lk(q,B))^\#$, and $F\cap B$ is its boundary, the cubical
combinatorial sphere
$\partial\st(q,B)=\fll q\flr\x\lk(q,B)^\#\cup(\partial\fll q\flr)\x (C^*\lk(q,B))^\#$.
Further note that $\st(q,B)\but\partial\st(q,B)$ is the dual cone
$\cel q\cer$ of $q$ in $B$.
Then $B^+=(B\but\cel q\cer)\cup F$ is a cubical combinatorial $k$-ball,
which does not contain $q$.

Since $\Lambda^+\cap\lk(q,T)=\Lambda$, we have $Q^+\cap T=Q$.
Furthermore, $|T^+|$ collapses onto $|T\cup (q_1*[k_1])\x\dots\x (q_n*[k_n])|$
(using conewise collapses of the form $X\x CY\searrow X\cup Z\x CY$ where $Z$ is
a closed subpolyhedron of $X$), which in turn collapses onto $|T\cup E|=|T\cup Q^+|$
(using the collapse of the cone $|\prod_{i\in C}q_i*[k_i]|$ onto its subcone
$|(C^*D)^\#|$).

In order to fit the above process in an inductive argument, let us now write
$Q_0$, $B_0$ for the given $Q$, $B$.
Assuming that $Q_i$, $B_i$ have been constructed, along with some distinct
$q_1,\dots,q_i\in (B_0\but\partial B_0)\but B_i$, we repeat the above process
with $Q=Q_i$ and $B=B_i$, with one modification: $q$ is now not an arbitrary
face of $B_i\but\partial B_i$, but one that is also a face of the original
$B_0\but\partial B_0$.
Since $q$ is still required to be a face of $B_i$, our hypothesis entails that
$q\notin\{q_1,\dots,q_i\}$.
We set $Q_{i+1}=Q^+$, $B_{i+1}=B^+$, and $q_{i+1}=q$.
Then $q_0,\dots,q_{i+1}\in (B_0\but\partial B_0)\but B_{i+1}$, which completes
the inductive step.
Since $B_0\but\partial B_0$ is finite, the number of steps is bounded.
If the final step is $r$th, it is easy to see that $B_r\cap
B_0=\partial B_0=\partial B_r$, and $B_0\cup B_r$ bounds a cubical combinatorial
$(k+1)$-ball $\beta$ (namely, $\beta$ is the union of all the $(k+1)$-balls of
the form $E$) such that $\beta\cap Q_0=B_0$ and $\beta\cup Q_0=Q_r$.
\end{proof}

\begin{remark} The combinatorial type of the ball $\beta$ depends on the
order in which $q_1,\dots,q_r$ are picked out of $B_0\but\partial B_0$.
For instance, suppose that $n=2$, $k=1$ and the arc $B_0$ consists of $e$ edges
(and hence $e+1$ vertex).
If $e>1$, then we may take $q_1,\dots,q_r$ to be all the non-boundary vertices,
ordered consecutively, which will lead to the same $\beta$ as in \cite{KKS}.
For instance if $e=2$ (so $r=1$) and $T_1=Q_0=B_0$, $T_2=pt$, then $\tilde T_1=T_1$,
$\tilde T_2$ is a single edge, and $Q_r=B_r=\tilde T_1\x\tilde T_2$ (which
amounts to two squares).
On the other hand, if we first pick out all the edges (in any order) and then
the $e-1$ non-boundary vertices (in any order), the result will be unique, but
quite different from the above.
For instance if $e=2$ (so $r=3$) and $T_1=Q_0=B_0$, $T_2=pt$, then at the
final step $\tilde T_1$ is a triod, $\tilde T_2$ contains two edges, and $B_r$
consists of four squares.
Picking out only vertices but not consecutively may also lead to a $\beta$
different from that in \cite{KKS}.
\end{remark}

\begin{remark} As discussed in the previous remark, the construction in the proof
of Lemma \ref{main lemma} depends on the choices of the cubes $q_1,\dots,q_r$.
Let us describe a canonical range of choices that all lead to the same embedding.
Each tree $T_i$ is constructed in stages $pt=T_{i0}\subset\dots\subset T_{is}=T_i$.
The vertices of $T_i$ are partially ordered by $v<w$ if there exists a $k<s$
such that $v\in T_{ik}$ and $w\notin T_{ik}$, yet $w$ and $v$ belong to the same
component of $|T_i\but\cel T_{i,k-1}\cer|$.
(In particular, incomparable vertices are non-adjacent in the tree.)
This yields a partial order on the vertices of
$B\but\partial B\subset Q\subset T_1\x\dots\x T_n$.
Let $q_1,\dots,q_r$ be the vertices of $B\but\partial B$ arranged in some
total order extending the constructed partial order.
It is clear then that $r$ is indeed the last stage of the construction, and that
$Q_r$ does not depend on the choice of the total order.
\end{remark}

An alternative proof of the implication (iii)\imp(i) in Theorem \ref{characterization} is
given by Lemma \ref{main lemma} along with the following lemma (take $k=n-1$).

\begin{lemma}\label{standard embedding}
Let $L$ be an $k$-dimensional simplicial complex.
Then there exist trees $T_0,\dots,T_k$ and a PL embedding $f\:|L|\to |T|$,
where $T=T_0\x\dots\x T_k$ such that $f(|\sigma|)$ is cubulated by a
subcomplex of $T$ for every simplex $\sigma$ of $L$.
\end{lemma}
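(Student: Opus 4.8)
The plan is to realize the desired map as the base of the cubulated cone over a join of finite sets, recycling the join construction from the proof of Theorem~\ref{bowers}. First I would pass to the barycentric subdivision $L'$ and color each vertex of $L'$ by the dimension of the simplex of $L$ of which it is the barycenter. Since every edge of $L'$ joins the barycenters of a simplex and one of its proper faces, this is a proper coloring by the colors $0,\dots,k$; let $S_i$ denote the set of vertices of color $i$, i.e.\ the barycenters of the $i$-simplices of $L$. As noted in the proof of Theorem~\ref{bowers}, the resulting simplicial map $g\:L'\to S_0*\dots*S_k$ is an embedding onto a subcomplex of the join, and for every simplex $\sigma$ of $L$ it carries the induced subdivision $\sigma'$ onto a subcomplex $g(\sigma')$ of the join.

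Next I would set $T_i=(C^*S_i)^\#$, the canonical subdivision of the cone over the finite (discretely ordered) set $S_i$; its cubes are the vertex $\hat0$, the leaves $S_i$, and the edges $[\hat0,s]$, so $T_i$ is the star with center $\hat0$ and leaves $S_i$, hence a tree. Writing $J=S_0*\dots*S_k$, the formalism of \cite[Chapter 2]{M3} gives $C^*J=C^*S_0\x\dots\x C^*S_k$ (this is the very definition $J=\partial^*(C^*S_0\x\dots\x C^*S_k)$), while canonical subdivision commutes with products because an order interval of a product poset is the product of its coordinate order intervals. Hence
\[
T:=T_0\x\dots\x T_k=(C^*S_0)^\#\x\dots\x(C^*S_k)^\#=(C^*J)^\#,
\]
so $|T|$ is exactly the cubulation of the cone $|CJ|$ with apex the vertex $\hat0=(\hat0,\dots,\hat0)$.

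It remains to exhibit $|J|$ as a cubical subcomplex of $T$ that contains the image of $L'$. The order intervals $[\alpha,\beta]$ of $C^*J$ with $\alpha\ne\hat0$ are precisely the order intervals of the subposet $J\i C^*J$, and they are closed under passing to faces; thus they form a subcomplex $J^\#$ of $(C^*J)^\#=T$ with $|J^\#|=|J|$ (the base of the cone, away from the apex). I would then define $f$ as the composite $|L|=|L'|\to|J|=|J^\#|\i|T|$ induced by $g$, which is a PL embedding, being a simplicial embedding followed by a cubical inclusion. For each simplex $\sigma$ of $L$ the image $f(|\sigma|)=|g(\sigma')|$ is cubulated by the subcomplex $(g(\sigma'))^\#$ of $J^\#\i T$, which is the required cube-compatibility; in fact $f(|L|)$ itself is cubulated by the subcomplex $(g(L'))^\#$.

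The coloring and the join embedding are routine and standard. The one point that deserves care is purely combinatorial: verifying, within the poset language of \cite[Chapter 2]{M3}, that the product of the stars $T_i$ is canonically the cubulated cone $(C^*J)^\#$, and that the join $J$ reappears inside it as the face-subcomplex $J^\#$ consisting of the cubes that miss the apex $\hat0$. Once this identification is in place, both the embedding and the cube-compatibility condition follow formally, so I do not expect a genuine obstacle beyond this bookkeeping.
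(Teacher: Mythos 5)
Your proof is correct and takes essentially the same route as the paper's: both embed the barycentric subdivision of $L$ as a subcomplex of the join $S_0*\dots*S_k$ of the sets of barycenters, identify the product of the star-trees $T_i=(C^*S_i)^\#$ with the canonical (cubical) subdivision $(C^*(S_0*\dots*S_k))^\#$ of the cone over that join, and obtain the cubulation condition by taking canonical subdivisions of subcomplexes. The only cosmetic difference is that the paper packages the join embedding via the prejoin and the identity $(S_0+\dots+S_k)^\flat\simeq S_0*\dots*S_k$, whereas you rederive it from the explicit coloring argument of Theorem \ref{bowers}.
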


The {\it prejoin} $P+Q$ consists of the elements of $P\cup Q$ with
the order $\preceq$ defined as follows: $p\preceq q$ iff either
$p,q\in P$ and $p\le q$ in $P$; or $p,q\in Q$ and $p\le q$ in $Q$;
or $p\in P$ and $q\in Q$.
Note that $C^*P\simeq pt+P$.
It is easy to see that $(P+Q)^\flat\simeq P^\flat+Q^\flat$, where
$P^\flat$ denotes the barycentric subdivision (see details in \cite{M3}).

\begin{proof}
Let $S_i$ be the set of $i$-dimensional simplices of $L$.
Then $L$ is a subcomplex of $S_0+\dots+S_k$.
Hence $L^\flat$ is a subcomplex of $(S_0+\dots+S_k)^\flat\simeq S_0*\dots*S_k$,
which in turn is a subcomplex of $C^*(S_0*\dots*S_k)$.
Therefore $(L^\flat)^\#$ is a subcomplex of $(C^*(S_0*\dots*S_k))^\#\simeq
(C^*S_0)^\#\x\dots\x(C^*S_k)^\#$.
Each $(C^*S_i)^\#$ is a tree, and the assertion follows.
\end{proof}

\section{Local cohomology}

By $H^*$ we denote the Alexander--Spanier cohomology \cite{Sp}, \cite{Ma},
or equivalently (see \cite{Sk2}) sheaf cohomology with constant coefficients
\cite{Br}.
If the coefficients are omitted, they are understood to be integer.
The case of coefficients in a field is much easier (see \cite{Wh}) but
will not suffice for our purposes.

If $(X,Y)$ is a pair of compacta, $H^i(X,Y)$ is isomorphic to the direct limit
$\dirlim H^i(P_j,Q_j)$, where $\dots\to(P_1,Q_1)\to(P_0,Q_0)$ is any
inverse sequence of pairs of compact polyhedra with inverse limit $(X,Y)$.
In particular, every cohomology group $H^i(Y,X)$ is countable.

More generally, when $Y$ is closed in $X$ (which we always assume to be
metrizable), then $H^i(X,Y)$ coincides (see \cite{Sk2}) with the \v Cech
cohomology of $(X,Y)$, which may be defined as the direct limit of the
$i$th cohomology groups of the nerves of all open coverings of $(X,Y)$.
In particular, if $Y$ is closed in $X$ and $X$ is $n$-dimensional, then
$H^i(X,Y)=0$ for $i>n$ (since covers with at most $n$-dimensional nerve form
a cofinal subset in the directed set of all open covers of $X$).

If $X$ is a compactum and $x\in X$, the local cohomology group
$H^i(X,\,X\but\{x\})$ is isomorphic to $\dirlim H^{i-1}(U_i\but\{x\})$,
where $U_0\supset U_1\supset\dots$ are neighborhoods of $x$ in $X$ such that
$\bigcap U_k=\{x\}$ and each $\Int U_k\supset\Cl U_{k+1}$.
As observed in \cite[\S1]{Sk1}, this follows from the exact sequences of
the pairs $(U_k,U_k\but\{x\})$ and
the fact that the direct limit functor preserves exactness of sequences.
However, this isomorphism will not be used in the sequel.

Instead, we shall use the following more geometric description of the local
cohomology groups (parallel to \cite[proof of Lemma 1]{Mi2}).

\begin{proposition}\label{prop:u0infty}
Let $X$ be a compactum, let $x\in X$ and let $U_1\supset U_2\supset\dots$
be neighborhoods of $x$ in $X$ such that $\bigcap U_k=\{x\}$ and each
$\Int U_k\supset\Cl U_{k+1}$.
Then
$$H^i(X,\,X\but\{x\})\simeq
H^i(X\x [0,\infty),\,X\x [0,\infty)\but U_{[0,\infty)}),$$
where
$U_{[0,\infty)}=U_0\x [0,1)\ \cup\ U_1\x [1,2)\ \cup\ U_2\x [2,3)\ \cup\ \dots$.
\end{proposition}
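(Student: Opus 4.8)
The plan is to realize both sides of the asserted isomorphism as the outcome of one and the same Milnor $\invlim$--$\derlim$ exact sequence, the one associated to the neighborhood filtration $U_1\supset U_2\supset\cdots$, and then to compare the two sequences by the five lemma. Throughout I may assume the $U_m$ open and $U_0=X$ (the bottom level of $U_{[0,\infty)}$ plays no role). First I would record the elementary geometry of $A:=X\x[0,\infty)\but U_{[0,\infty)}$. Since each $U_m$ is open and $\Cl U_{m+1}\i U_m$, the set $U_{[0,\infty)}$ is open, hence $A$ is closed and $A=\bigcup_{m\ge0}(X\but U_m)\x[m,m+1)$. The crucial feature is that $A$ is \emph{upward closed} in the ray coordinate: if $(y,t)\in A$ and $t'\ge t$, then $(y,t')\in A$, because the slices $X\but U_m$ increase with $m$.

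Writing $Y^{(m)}=X\x[0,m]$ and $A^{(m)}=A\cap Y^{(m)}$, the homotopy pushing the ray coordinate monotonically up to level $m$ is, by upward-closedness, a deformation retraction of the pair $(Y^{(m)},A^{(m)})$ onto its top slice $(X\x\{m\},\,(X\but U_m)\x\{m\})\simeq(X,X\but U_m)$; hence $H^i(Y^{(m)},A^{(m)})\simeq H^i(X,X\but U_m)$, naturally in $m$. Under these isomorphisms the restriction maps induced by $Y^{(m)}\i Y^{(m+1)}$ become the canonical maps $H^i(X,X\but U_{m+1})\to H^i(X,X\but U_m)$ (induced by $X\but U_m\i X\but U_{m+1}$), so the tower computing $H^*(Y,A)$ at the finite levels literally coincides with the tower $\{H^i(X,X\but U_m)\}$.

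The heart of the argument is the $\invlim$--$\derlim$ comparison. The inclusions $Y^{(m)}\i\Int Y^{(m+1)}$ exhibit $Y=X\x[0,\infty)$ as a proper increasing union of the compacta $Y^{(m)}$; dually, $\Cl U_{m+1}\i U_m$ gives $X\but U_m\i\Int(X\but U_{m+1})$, so $X\but\{x\}=\bigcup_m(X\but U_m)$ is a proper increasing union of the compacta $X\but U_m$. For Alexander--Spanier cohomology of such an exhaustion there is a short exact sequence
$$0\to\derlim H^{i-1}(X,X\but U_m)\to H^i(Y,A)\to\invlim H^i(X,X\but U_m)\to0,$$
and, for exactly the same tower, one for the local group
$$0\to\derlim H^{i-1}(X,X\but U_m)\to H^i(X,X\but\{x\})\to\invlim H^i(X,X\but U_m)\to0.$$
Finally I would exhibit a morphism between these two sequences. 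The projection $\pi\:(Y,A)\to(X,X\but\{x\})$, $(y,t)\mapsto y$, is a map of pairs, since it sends $A$ into $\bigcup_m(X\but U_m)=X\but\{x\}$; composing $\pi$ with the top-slice inclusion at level $m$ recovers precisely the canonical map $H^i(X,X\but\{x\})\to H^i(X,X\but U_m)$ into the tower. Thus $\pi^*$ is compatible with both sequences and is the identity on the flanking $\invlim$ and $\derlim$ columns, so the five lemma yields the asserted isomorphism.

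The step I expect to be the main obstacle is the existence and naturality of the two Milnor sequences, since Alexander--Spanier (equivalently \v Cech) cohomology does \emph{not} in general convert an increasing union of closed sets into an inverse limit of cohomologies on the nose; the $\derlim$-correction must be supplied, and one must verify that the proper exhaustions above (paracompact, with $Y^{(m)}\i\Int Y^{(m+1)}$ and $X\but U_m\i\Int(X\but U_{m+1})$) are tame enough for the sequence to hold and for $\pi^*$ to induce the identity on the flanks. This is exactly the point to be handled in the manner of \cite[proof of Lemma 1]{Mi2}. Everything else---openness of $U_{[0,\infty)}$, the upward-closedness of $A$, the push-up deformation retractions, and the bookkeeping identifying the two towers---is routine.
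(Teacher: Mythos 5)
Your preparatory work is correct --- the description of the staircase $A$, its upward-closedness in the ray coordinate, the push-up deformation retractions of $(Y^{(m)},A^{(m)})$ onto $(X,\,X\but U_m)$, the identification of the towers, and the compatibility of $\pi^*$ with them are all fine. But the step you yourself flag as ``the main obstacle'' is not a technical verification to be deferred: it is the entire content of the proposition, and as written your argument is circular. Your two Milnor sequences are not of the same kind. For the pair $(Y,A)$ the subspace $A$ is \emph{closed} in $Y$ and the pair is exhausted by compact pairs; that sequence is indeed provable (by the even/odd Mayer--Vietoris decomposition of the staircase --- which is exactly how the paper derives its Milnor sequence \emph{after} the proposition --- or sheaf-theoretically: for a soft resolution, relative sections over $(Y^{(m+1)},A^{(m+1)})$ restrict \emph{onto} relative sections over $(Y^{(m)},A^{(m)})$, so the Mittag--Leffler condition holds at the cochain level). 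For the pair $(X,\,X\but\{x\})$ the subspace is \emph{not} closed, and the situation is opposite: at the cochain level, relative soft sections for $(X,\,X\but\{x\})$ are the decreasing intersection $\bigcap_m$ of those for $(X,\,X\but U_m)$, i.e.\ the bonding maps of the cochain tower are \emph{injections}, and $\derlim$ of such a tower has no vanishing theorem. So there is no general ``exhaustion principle'' that delivers your second sequence; in this paper that sequence appears only after the proposition, as its consequence (the Milnor-type sequence attributed to Harlap), so invoking it here is assuming what is to be proved. Nor does the reference you lean on help: \cite[proof of Lemma 1]{Mi2} is the \emph{model for the paper's own proof of the proposition} (comparison of a limit with a telescope), not a source for a missing exhaustion principle --- the whole point of introducing the staircase pair $(Y,A)$ is to replace the non-closed pair $(X,\,X\but\{x\})$ by a closed one for which the $\invlim$--$\derlim$ machinery works.

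The paper's actual proof is far shorter and uses only what you already established. The projection gives a map of pairs $f\:(Y,A)\to(X,\,X\but\{x\})$. On ambient spaces it is the homotopy equivalence $X\x[0,\infty)\to X$. On subspaces, the restriction $h\:A\to X\but\{x\}$ has an explicit homotopy inverse $g(y)=(y,\phi(y))$, where $\phi\:X\but\{x\}\to[0,\infty)$ is any map growing fast enough near $x$ that $g$ lands in $A$; the homotopy $g\circ h\simeq\id_A$ is the straight-line homotopy in the ray coordinate, which stays inside $A$ precisely because of the upward-closedness you verified. The five lemma applied to the map $f$ between the long exact sequences of the two \emph{pairs} (not between Milnor sequences) then shows that $f^*$ is an isomorphism. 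In short: apply the five lemma one level lower, to the exact sequences of the pairs themselves; no $\invlim$--$\derlim$ technology enters the proof at all --- it comes afterwards, as an application of the proposition rather than an ingredient of it.
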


Note that if the $U_k$ are open, then $X\x [0,\infty)\but U_{[0,\infty)}$ is
a closed subset of $X\x [0,\infty)$.
Hence from the preceding discussion we obtain

\begin{corollary}\label{cor:nplus1}
If $X$ is an $n$-dimensional compactum, $H^i(X,\,X\but\{x\})=0$
for $i>n+1$ and all $x\in X$.
\end{corollary}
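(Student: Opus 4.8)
The plan is to deduce this directly from Proposition \ref{prop:u0infty} together with the vanishing of cohomology above the covering dimension that was recalled in \S3. First I would choose the neighborhoods $U_k$ of $x$ to be \emph{open}: in a metric space $x$ has a countable base of open neighborhoods with diameters tending to $0$, and after passing to a subsequence (using $\Cl B(x,r_{k+1})\i B(x,r_k)$ whenever $r_{k+1}<r_k$) we may assume the nesting condition $\Int U_k\supset\Cl U_{k+1}$ and $\bigcap U_k=\{x\}$ required by the proposition. With the $U_k$ open, the set $Y:=X\x[0,\infty)\but U_{[0,\infty)}$ is closed in $X\x[0,\infty)$, exactly as noted just after the proposition.

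Next I would invoke the identification supplied by Proposition \ref{prop:u0infty},
$$H^i(X,\,X\but\{x\})\simeq H^i(X\x[0,\infty),\,Y),$$
so that it suffices to show the right-hand group vanishes for $i>n+1$. The key step is a dimension count: since $X$ is an $n$-dimensional compactum and $[0,\infty)$ is one-dimensional, and both are separable metrizable (hence so is their product), the classical product inequality $\dim(A\x B)\le\dim A+\dim B$ gives $\dim\bigl(X\x[0,\infty)\bigr)\le n+1$. Now $Y$ is closed in the metrizable space $X\x[0,\infty)$, so the fact recalled in \S3 applies verbatim: covers of $X\x[0,\infty)$ with nerve of dimension $\le n+1$ are cofinal among all open covers, whence $H^i(X\x[0,\infty),\,Y)=0$ for every $i>n+1$. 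Combining this with the displayed isomorphism yields $H^i(X,\,X\but\{x\})=0$ for $i>n+1$, as claimed.

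The only point demanding genuine care is the dimension bound on the \emph{non-compact} product $X\x[0,\infty)$, together with the observation that the cohomological vanishing statement of \S3 was phrased for an arbitrary metrizable ambient space with closed subspace (not merely for compacta), so it is legitimate to feed $X\x[0,\infty)$ into it. Everything else is a formal chain: the stated isomorphism of Proposition \ref{prop:u0infty}, the openness of the $U_k$ forcing $Y$ to be closed, and the cofinality of low-dimensional covers. I do not anticipate any serious obstacle beyond checking that the separable-metric product theorem indeed delivers $\dim\le n+1$.
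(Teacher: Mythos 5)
Your proposal is correct and is essentially the paper's own argument: the paper likewise takes the $U_k$ open so that $X\x[0,\infty)\but U_{[0,\infty)}$ is closed, applies Proposition \ref{prop:u0infty}, and invokes the \v Cech-cohomology vanishing for closed pairs in a metrizable space of dimension $\le n+1$. The only detail you spell out that the paper leaves implicit is the product inequality $\dim(X\x[0,\infty))\le n+1$, which indeed holds for (separable) metrizable spaces.
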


\begin{proof}[Proof of Proposition \ref{prop:u0infty}]
We shall show that $(X,\,X\but\{x\})$ is ``almost'' homotopy equivalent to
the mapping telescope of pairs $(X,\,X\but U_i)$, meaning that there is a map
of pairs in one direction, which admits a homotopy inverse separately on
each entry of the pair; by the Five Lemma, this is just good enough as long
as cohomology is concerned.

The projection $X\x [0,\infty)\to X$ yields a map of pairs
$f\:(X\x[0,\infty),\,X\x[0,\infty)\but U_{[0,\infty)})\to
(X,\,X\but\{x\})$.
If $\phi\:X\but\{x\}\to[0,\infty)$ is a map such that
$\phi^{-1}([0,n])\subset X\but U_n$, then
$g\:X\but\{x\}\to X\x[0,\infty)$ defined by $g(y)=(y,\phi(y))$ is an embedding
into $X\x [0,\infty)\but U_{[0,\infty)}$.
It is easy to see that $g$ is homotopy inverse to the restriction
$h\:X\x[0,\infty)\but U_{[0,\infty)}\to X\but\{x\}$ of the projection
$X\x[0,\infty)\to X$; hence $h$ is a homotopy equivalence.
Using the isomorphisms induced by $g$ and the homotopy
equivalence $X\x [0,\infty)\to X$, the Five Lemma implies that
$f^*$ is an isomorphism.
\end{proof}

By well-known arguments (see \cite[proof of Theorem 4]{Mi1} or
\cite[proof of equation ($*$) in \S1.B or proof of Theorem 3.1(b)]{Me}),
Proposition \ref{prop:u0infty} gives rise to a Milnor-type natural short exact
sequence (found explicitly in \cite{Ha}):
$$0\to\derlim H^{i-1}(X,\,X\but U_k)\to H^i(X,\,X\but\{x\})\to
\invlim H^i(X,\,X\but U_k)\to 0.$$
In particular,
\begin{equation}
H^{n+1}(X,\,X\but\{x\})\simeq\derlim H^n(X,\,X\but U_k),\tag{$*$}
\end{equation}
if $X$ is an $n$-dimensional compactum.

\begin{lemma}\label{lem:product} If $X$ and $Y$ are compacta of dimensions
$n$ and $m$, and $x\in X$ and $y\in Y$ are such that $H^{n+1}(X,\,X\but\{x\})=0$
and $H^{m+1}(Y,\,Y\but\{y\})=0$, then also
$H^{n+m+1}(X\x Y,\,X\x Y\but\{(x,y)\})=0$.
\end{lemma}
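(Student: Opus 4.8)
The plan is to reduce the vanishing of the top local cohomology of the product to a Mittag--Leffler argument, using the relative K\"unneth theorem to split the relevant tower as a tensor product. Since $\dim(X\x Y)\le n+m$, Corollary \ref{cor:nplus1} shows that $n+m+1$ is the top possibly nonzero degree, and the Milnor-type sequence preceding equation $(*)$ degenerates (its $\invlim$ term $\invlim H^{n+m+1}(X\x Y,\,X\x Y\but W_k)$ vanishing because $\dim(X\x Y)\le n+m$) to a natural isomorphism
$$H^{n+m+1}(X\x Y,\,X\x Y\but\{(x,y)\})\simeq\derlim H^{n+m}(X\x Y,\,X\x Y\but W_k)$$
for any neighborhoods $W_k$ of $(x,y)$ with $\bigcap_k W_k=\{(x,y)\}$ and $\Int W_k\supset\Cl W_{k+1}$. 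First I would take $W_k=U_k\x V_k$, where $\{U_k\}$ and $\{V_k\}$ are neighborhood systems of $x$ and $y$ satisfying the hypotheses of $(*)$ for $X$ and $Y$ separately; then $W_k$ satisfies the analogous hypotheses, and $X\x Y\but W_k=P_k\x Y\cup X\x R_k$, where $P_k:=X\but U_k$ and $R_k:=Y\but V_k$ are closed, hence compact.

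Next I would compute each term of the tower by the relative K\"unneth theorem for the compact pairs $(X,P_k)$ and $(Y,R_k)$. Over finite polyhedral pairs the integral $\otimes$--$\operatorname{Tor}$ K\"unneth sequence holds because the groups involved are finitely generated; writing $(X,P_k)$, $(Y,R_k)$ and their product pair as inverse limits of polyhedral pairs and invoking continuity of \v Cech cohomology together with the exactness of direct limits and their commutation with $\otimes$ and $\operatorname{Tor}$, the same sequence descends to the compact pairs. Because $X$ is $n$-dimensional and $P_k$ is closed, $H^i(X,P_k)=0$ for $i>n$, and likewise $H^j(Y,R_k)=0$ for $j>m$. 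Consequently, in total degree $n+m$ the only surviving summand of the tensor part is $H^n(X,P_k)\otimes H^m(Y,R_k)$, while every $\operatorname{Tor}$-summand --- living in total degree $n+m+1$ --- vanishes for dimensional reasons. Setting $A_k:=H^n(X,P_k)$ and $B_k:=H^m(Y,R_k)$, naturality of the cross product identifies the tower $\{H^{n+m}(X\x Y,\,X\x Y\but W_k)\}$ with $\{A_k\otimes B_k\}$, whose bonding maps are the tensor products of those of $\{A_k\}$ and $\{B_k\}$.

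It then remains to prove $\derlim(A_k\otimes B_k)=0$. By equation $(*)$ and the two hypotheses, $\derlim A_k\simeq H^{n+1}(X,X\but\{x\})=0$ and $\derlim B_k\simeq H^{m+1}(Y,Y\but\{y\})=0$. The groups $A_k$ and $B_k$ are countable, being cohomology of pairs of compacta, so by a theorem of Gray \cite{Gr} --- a tower of countable abelian groups has $\derlim=0$ if and only if it is Mittag--Leffler --- both towers $\{A_k\}$ and $\{B_k\}$ satisfy the Mittag--Leffler condition. An elementary check shows this condition is inherited by the tensor product: if $\im(A_l\to A_k)$ and $\im(B_l\to B_k)$ stabilize to $A_k'$ and $B_k'$ for large $l$, then $\im(A_l\otimes B_l\to A_k\otimes B_k)$ stabilizes to the image of $A_k'\otimes B_k'$ in $A_k\otimes B_k$. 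Hence $\{A_k\otimes B_k\}$ is Mittag--Leffler, so $\derlim(A_k\otimes B_k)=0$, which by the first display is exactly the desired vanishing.

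The step I expect to be the main obstacle is the passage to the Mittag--Leffler condition: the hypotheses supply only $\derlim A_k=\derlim B_k=0$, and in general this does not force $\derlim(A_k\otimes B_k)=0$, so the countability of the groups and Gray's dichotomy (converting $\derlim=0$ into Mittag--Leffler, rather than the trivial implication Mittag--Leffler $\Rightarrow\derlim=0$) are genuinely needed. A secondary technical point is the careful justification, via continuity of \v Cech cohomology and a choice of compatible polyhedral approximations, that the integral relative K\"unneth sequence is valid for the non-triangulable compact pairs $(X,P_k)$ and $(Y,R_k)$ and is natural in $k$.
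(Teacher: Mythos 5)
Your proof is correct and follows essentially the same route as the paper's: both arguments use the isomorphism $(*)$ together with Gray's Lemma (and the countability of cohomology of compact pairs) to translate the hypotheses and the conclusion into Mittag--Leffler conditions, and then conclude via naturality of the K\"unneth formula, whose Tor terms vanish for dimensional reasons, with the Mittag--Leffler property passing to the tensor-product tower. The only cosmetic difference is in how the relative K\"unneth formula for compact pairs is justified --- the paper cites Bredon's sheaf-theoretic version (and Massey), while you re-derive it from the finite polyhedral case by continuity of \v Cech cohomology --- and both justifications are sound.
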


\begin{proof} Since cohomology groups of pairs of compacta are countable,
the hypothesis and the conclusion can be reformulated in terms of
the Mittag-Leffler condition, using the isomorphism ($*$) and Gray's Lemma
(see \cite[Lemma 3.3]{Me}).
Then the assertion follows (cf.\ \cite[proof of Lemma 3.6(b)]{MS}) from
the naturality in the K\"unneth formula \cite[Theorem II.15.2 and
Proposition II.12.3]{Br} (see also \cite[Theorem 7.1]{Ma}, which implies
the relative case using the map excision axiom).
\end{proof}

\begin{lemma}\label{lem:incl}
If $X$ is an $n$-dimensional compactum and
$H^{n+1}(X,\,X\but\{x\})=0$, then $H^{n+1}(Y,\,Y\but\{x\})=0$ for every
$n$-dimensional compactum $Y\i X$ containing $x$.
\end{lemma}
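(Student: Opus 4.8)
The plan is to show that the restriction homomorphism $H^{n+1}(X,\,X\but\{x\})\to H^{n+1}(Y,\,Y\but\{x\})$ induced by the inclusion of pairs $(Y,\,Y\but\{x\})\i(X,\,X\but\{x\})$ is surjective; the conclusion is then immediate from the hypothesis $H^{n+1}(X,\,X\but\{x\})=0$. Both $X$ and $Y$ are at most $n$-dimensional (the latter since $Y\i X$), so the isomorphism ($*$) applies to each. I would choose open neighborhoods $U_1\supset U_2\supset\cdots$ of $x$ in $X$ with $\bigcap_k U_k=\{x\}$ and $\Int U_k\supset\Cl U_{k+1}$, and set $V_k=U_k\cap Y$; one checks readily that the $V_k$ form an admissible neighborhood basis of $x$ in $Y$ and that $Y\but V_k=Y\but U_k$. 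By the naturality of the Milnor-type sequence, the restriction map is then identified with the map $\derlim H^n(X,\,X\but U_k)\to\derlim H^n(Y,\,Y\but U_k)$ induced by the levelwise restrictions $\rho_k\colon H^n(X,\,X\but U_k)\to H^n(Y,\,Y\but U_k)$.

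The crux is that each $\rho_k$ is surjective. Fix $k$, write $U=U_k$, and put $A=X\but U$ and $Z=A\cup Y$, both closed in $X$. The inclusion $(Y,\,Y\but U)\hookrightarrow(Z,A)$ is a homeomorphism on the complementary open sets (each equal to $U\cap Y$) of compact pairs, so by excision (continuity of \v Cech--Alexander--Spanier cohomology on compact pairs) it induces an isomorphism $H^n(Z,A)\cong H^n(Y,\,Y\but U)$, under which $\rho_k$ becomes the restriction $H^n(X,A)\to H^n(Z,A)$. The cohomology exact sequence of the triple $(X,Z,A)$ contains
$$H^n(X,A)\xrightarrow{\rho_k}H^n(Z,A)\to H^{n+1}(X,Z),$$
and since $X$ is $n$-dimensional and $Z$ is closed in $X$, the group $H^{n+1}(X,Z)$ vanishes (as recalled in \S3). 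Hence $\rho_k$ is onto.

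Finally, the levelwise surjection of towers $\{H^n(X,\,X\but U_k)\}_k\to\{H^n(Y,\,Y\but U_k)\}_k$ fits into a short exact sequence of towers with kernel $\{\ker\rho_k\}_k$, and the associated exact sequence of inverse and derived limits ends in $\derlim H^n(X,\,X\but U_k)\to\derlim H^n(Y,\,Y\but U_k)\to 0$; thus the induced map on $\derlim$ is surjective. Combined with the isomorphisms ($*$) for $X$ and $Y$, this shows the restriction $H^{n+1}(X,\,X\but\{x\})\to H^{n+1}(Y,\,Y\but\{x\})$ is surjective, and the lemma follows. (The last step may alternatively be phrased through the Mittag-Leffler condition and Gray's Lemma, as in the proof of Lemma \ref{lem:product}.) The one place demanding care is the surjectivity of $\rho_k$: it relies on applying excision to the \emph{closed} pairs $(Z,A)$ and $(Y,\,Y\but U)$, and the reason one cannot argue directly with the open set $X\but\{x\}$ --- which forces the detour through the $U_k$ and $\derlim$ --- is precisely that the analogous excision identification degenerates when $A$ is replaced by $X\but\{x\}$, since then $Z=A\cup Y=X$ and $H^n(Z,A)$ reproduces $H^n(X,\,X\but\{x\})$ rather than $H^n(Y,\,Y\but\{x\})$.
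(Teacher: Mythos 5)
Your proposal is correct and follows essentially the same route as the paper's proof: levelwise surjectivity of the restrictions $H^n(X,\,X\but U_k)\to H^n(Y,\,Y\but U_k)$ via the exact sequence of the triple $(X,\,Y\cup(X\but U_k),\,X\but U_k)$ (your $(X,Z,A)$) together with the vanishing of $H^{n+1}$ of a closed pair in an $n$-dimensional compactum, followed by the six-term exact sequence of inverse and derived limits and the naturality of ($*$). The only difference is expository: you make explicit the excision identification $H^n(Z,A)\cong H^n(Y,\,Y\but U_k)$ that the paper leaves implicit.
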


\begin{proof} Let $U_k$ be open neighborhoods of $x$ in $X$ as in Proposition
\ref{prop:u0infty}.
The restriction map
$H^n(X,\,X\but U_k)\xr{f_k}H^n(Y,\,Y\but U_k)$ is onto from the exact
sequence of the triple $(X,\,Y\cup (X\but U_k),\,X\but U_k)$, due to
$H^{n+1}(X,\,Y\cup (X\but U_k))=0$.
Then $\derlim f_k$ is onto from the six-term exact sequence of inverse and
derived limits (see \cite[Theorem 3.1(d)]{Me} for a geometric proof) associated
to the short exact sequences
$$0\to\ker f_k\to H^n(X,\,X\but U_k)\xr{f_k}H^n(Y,\,Y\but U_k)\to 0.$$
But by naturality of the isomorphism ($*$), $\derlim f_k$ is identified with
the restriction map $H^{n+1}(X,\,X\but\{x\})\to H^{n+1}(Y,\,Y\but\{y\})$.
\end{proof}

\begin{remark}\label{rem:menger}
The Menger curve $M$ contains points $x$ such that
$H^2(M,\,M\but\{x\})\ne 0$.
(Since $M$ is known to be homogeneous, this applies to every $x\in M$.)
For let $Y$ be the subspace $\N^+\x [0,1)\cup [0,\infty]\x\{1\}$ of
$[0,\infty]\x [0,1]$, where $\N^+=\{0,1,\dots,\infty\}$, and
let $y=(\infty,1)\in Y$.
Let us represent $Y\but\{y\}$ as a union $\bigcup K_i$, where each $K_i$ is
compact and lies in $\Int K_{i+1}$. (And not just in $K_{i+1}$.)
Then $\dots\to\tilde H^0(K_1)\to\tilde H^0(K_0)$ is of the form
$\dots\to\bigoplus_{S_1}\Z\to\bigoplus_{S_0}\Z$, where
$S_0\supset S_1\supset\dots$ is a nested sequence of infinite countable sets
with $\bigcap S_i=\emptyset$.
Since $H^1(Y)=0=\tilde H^0(Y)$, the inverse sequence
$\dots\to H^1(Y,K_1)\to H^1(Y,K_0)$ is of the same form.
Clearly it does not satisfy the Mittag-Leffler condition and consists of
countable groups, so by Gray's Lemma (see \cite[Lemma 3.3]{Me})
its derived limit is nontrivial.
(In fact, it is easy to see, similarly to \cite[Example 3.2]{Me}, that
$\derlim H^1(Y,K_i)\simeq\prod_\N\Z/\bigoplus_\N\Z$.)
Thus by ($*$), $H^2(Y,\,Y\but\{y\})\ne 0$.
Since $Y$ embeds into $M$, Lemma \ref{lem:incl} implies
$H^2(M,\,M\but\{x\})\ne 0$, where $x$ is the image of $y$.
\end{remark}

\begin{lemma}\label{lem:1-dim} If $X$ is a local dendrite, then
$H^2(X,\,X\but\{x\})=0$ for every $x\in X$.
\end{lemma}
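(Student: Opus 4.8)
The plan is to exploit the local nature of the groups $H^i(X,\,X\but\{x\})$ in order to reduce to the case of a dendrite, and then to compute the relevant derived limit by hand. Since $X$ is $1$-dimensional, Corollary \ref{cor:nplus1} already gives vanishing in degrees $>2$, so only the degree $i=2=n+1$ is at issue; and by ($*$) we have $H^2(X,\,X\but\{x\})\simeq\derlim_k H^1(X,\,X\but U_k)$ for any neighborhoods $U_k$ as in Proposition \ref{prop:u0infty}. Thus it suffices to produce such $U_k$ for which this inverse sequence has \emph{surjective} bonding maps, for then its derived limit vanishes automatically (with no appeal even to Gray's Lemma).

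First I would reduce to the case where $X$ is a dendrite. The local cohomology depends only on a neighborhood of $x$: excising the closed set $X\but U$ from the pair $(X,\,X\but\{x\})$ gives $H^i(X,\,X\but\{x\})\simeq H^i(U,\,U\but\{x\})$ for every open $U\ni x$. A connected local dendrite contains only finitely many simple closed curves, so $x$ has arbitrarily small connected open neighborhoods $U$ meeting each such curve in an arc, and hence containing no simple closed curve; being a $1$-dimensional Peano continuum without simple closed curves, $D:=\Cl U$ is then a dendrite. Applying excision in both $X$ and $D$ (note $U$ is open in $D$ as well) identifies $H^i(X,\,X\but\{x\})$ with $H^i(D,\,D\but\{x\})$, so we may assume $X=D$ is a dendrite.

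For a dendrite $D$ one has $H^0(D)=\Z$ and $H^1(D)=0$, so the exact sequence of the pair $(D,\,D\but U_k)$ identifies $H^1(D,\,D\but U_k)$ with the reduced group $\tilde H^0(D\but U_k)$, and hence $H^2(D,\,D\but\{x\})\simeq\derlim_k\tilde H^0(D\but U_k)$. It remains to choose the $U_k$ so that the restriction maps $\tilde H^0(D\but U_{k+1})\to\tilde H^0(D\but U_k)$ are onto; equivalently, so that distinct components of $D\but U_k$ remain in distinct components of $D\but U_{k+1}$, i.e.\ no merging occurs as the neighborhood shrinks. I would obtain this from the standard representation of $D$ as an inverse limit $D=\invlim(T_k,p_k)$ of finite trees with monotone bonding maps, equipped with monotone retractions $q_k\:D\to T_k$. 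Taking $U_k=q_k^{-1}(N_k)$ for a small star-neighborhood $N_k$ of $q_k(x)$ in $T_k$ yields a neighborhood basis of $x$ for which $D\but U_k=q_k^{-1}(T_k\but N_k)$ has exactly as many components as the order of $q_k(x)$ in $T_k$ — monotonicity of $q_k$ keeps the point-inverses connected — and these components only refine as $k$ grows. Thus each $\tilde H^0(D\but U_k)$ is free of finite rank and the bonding maps are surjective, whence $\derlim_k\tilde H^0(D\but U_k)=0$.

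The main obstacle is precisely this last step: arranging the neighborhoods so that the components of the complements refine monotonically rather than merge. This is where the unique arcwise connectedness of a dendrite is essential: two points lying in different components of $D\but U_k$ are joined in $D$ by a unique arc passing through $x$, and the monotone-retraction choice of $U_k$ guarantees that this arc meets every smaller $U_{k+1}$, which is exactly what forbids the merging that would obstruct surjectivity. (By contrast, Remark \ref{rem:menger} shows that such merging does occur for the Menger curve, producing a nonzero $\derlim$ there.) Once surjectivity is established, the vanishing of the derived limit, and with it the lemma, is immediate.
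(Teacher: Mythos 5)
The overall strategy (excise down to a dendrite $D$, identify $H^2(D,\,D\but\{x\})$ with $\derlim_k\tilde H^0(D\but U_k)$ via ($*$) and the exact sequence of the pair, then kill the derived limit by a good choice of the $U_k$) is sound and genuinely different from the paper's argument, which never reduces to dendrites and instead factors the inclusions $K_i\to K_{i+1}$ through LC$_1$ compacta and combines Mittag-Leffler, dual Mittag-Leffler and Dydak's Lemma. But your proof of the key step fails. The justification you give for ``no merging'' --- \emph{``two points lying in different components of $D\but U_k$ are joined in $D$ by a unique arc passing through $x$''} --- is false: such an arc is only forced to pass through $U_k$, not through $x$. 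It passes through $x$ exactly when the two points lie in different components of $D\but\{x\}$; components of $D\but U_k$ lying in the \emph{same} branch at $x$ are the whole problem, and they occur whenever branch points of $D$ accumulate at $x$. Concretely, let $D$ be the comb with base $[0,1]$ and a tooth of height $1/n$ attached at $1/n$, and $x$ the origin. For a star (metric-ball) neighborhood $U_k$, a tooth whose root lies in $U_k$ but whose tip does not contributes a component of $D\but U_k$ that merges with the main component as soon as $U_{k+1}$ misses that tooth's root; whether your pulled-back stars $q_k^{-1}(N_k)$ avoid this depends on a delicate interplay between the radii of the $N_k$ and which teeth already lie in $T_k$, and nothing in your construction controls it. Surjectivity of the bonding maps is therefore not established; indeed this is precisely the ``Jacob's ladder'' mechanism of Remark \ref{rem:menger}, so a proof that never isolates what distinguishes a dendrite from the Menger curve at this point cannot be complete.

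The gap is repairable, but only by invoking a genuine property of dendrites that your proposal never uses, namely hereditary local connectedness (equivalently, that decorations hanging near $x$ form a null family). Two ways to finish: (a) prove that each branch of $D$ at $x$ can be cut off by a \emph{single} point arbitrarily close to $x$ (this uses that dendrites are finitely Suslinian), and build $U_k$ as complements of finitely many such cut-off pieces; this does yield surjective bonding maps; or, more cheaply, (b) abandon surjectivity and aim for Mittag-Leffler: every point of a dendrite has arbitrarily small open connected neighborhoods with \emph{finite} boundary, whence $D\but U_k$ has finitely many components, the image of $\tilde H^0(D\but U_j)\to\tilde H^0(D\but U_k)$ is the group of functions constant on the blocks of the partition of the (finite) component set of $D\but U_k$ induced by $D\but U_j$, and these partitions coarsen and hence stabilize as $j\to\infty$; then $\derlim=0$ follows. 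Either repair requires real work beyond what you wrote, and in particular the promised vanishing ``with no appeal even to Gray's Lemma'' is exactly the part that is missing.
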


The proof is a bit technical; let us explain informally some intuition behind it.
There are just two basic examples of inverse sequences of countable abelian groups
with nonzero $\derlim$: (i) $\dots\xr{p_1}\Z\xr{p_0}\Z$, each $p_i$ being
a nonzero prime (this occurs in the Skliarienko compactum), and (ii)
$\dots\emb\bigoplus_{i=1}^\infty\Z\emb\bigoplus_{i=0}^\infty\Z$
(this occurs in Remark \ref{rem:menger} and is called ``Jacob's ladder'' in
\cite{HR}).
Example (i) cannot occur in ($*$) with $n=1$, because there is ``not enough room
for twisting'' in one-dimensional spaces, so we cannot expect to find even a single
multiplication as in (i).
On the other hand, if $X$ is an LC$_n$ compactum, then we cannot find example (ii)
in ($*$), because $n$-cohomology of compact subsets of $X$ is ``almost'' finitely
generated in the sense that for every two compact subsets $K\i X$ and $L\i\Int K$,
the image of $H^n(K)\to H^n(L)$ is finitely generated \cite[II.17.5 and V.12.8]{Br}.

\begin{proof} Let us represent $X\but\{x\}$ as a union $\bigcup K_i$, where
each $K_i$ is compact and lies in $\Int K_{i+1}$.
Since $X$ is locally contractible, for each $n$ (in particular, for $n=1$), each
inclusion map $K_i\to K_{i+1}$ factors through a (not necessarily embedded in $X$)
LC$_n$ compactum $L_i$ \cite[Theorem 6.11]{Me}.
We recall that LC$_n$ compacta have finitely generated cohomology and (Steenrod)
homology in dimensions $\le n$ (see \cite[II.17.7 and V.12.8]{Br},
\cite[6.11]{Me}).
Universal coefficients formulas then imply that LC$_1$ compacta have free abelian
$H^1$ (see \cite[V.12.8]{Br}) and consequently also free abelian $H_0$ (see
\cite[\S V.3, Eq.\ (9) on p.\ 292]{Br}).

Consider a composition $f\:L_i\to K_{i+1}\to K_j\to L_j$.
By the naturality of the universal coefficients formula (see
\cite[V.12.8, V.13.7]{Br}), $f^*\:H^0(L_j)\to H^0(L_i)$ is dual to
$f_*\:H_0(L_i)\to H_0(L_j)$.
The image of $f_*$ is a subgroup of the free abelian group $H_0(L_j)$.
So it is itself free abelian, in particular, projective as a $\Z$-module.
Hence $f_*$ is a split epimorphism onto its image.
Then the inclusion of the image of $f^*$ into $H^0(L_i)$ is a split
monomorphism.
(Indeed, given abelian group homomorphisms
$f_*\:G\to H$, $f^*\:\Hom(H,\Z)\to\Hom(G,\Z)$ defined by $f^*(\psi)=\psi f_*$, and
$s\:\im f_*\to G$ such that $f_*sf_*=f_*$, define $r\:\Hom(G,\Z)\to\im f^*$ by
$r(\phi)=\phi sf_*$; then $rf^*=f^*$, i.e.\ $r(\psi f_*)=\psi f_*$ for each
$\psi\in\Hom(H,\Z)$.)
Thus $f^*$ is a homomorphism onto a direct summand of $H^0(L_i)$.
The finitely generated group $H^0(L_i)$ contains no infinitely decreasing
chain of direct summands; so the inverse sequence
$\dots\to H^0(L_1)\to H^0(L_0)$ satisfies the Mittag-Leffler condition.
Hence so does $\dots\to H^0(K_1)\to H^0(K_0)$.

On the other hand, consider a composition $g\:L_i\to K_{i+1}\to X$.
The image of $g^*\:H^1(X)\to H^1(L_i)$ is a subgroup of the free abelian
group $H^1(L_i)$.
So it is itself free abelian, in particular, projective as a $\Z$-module.
Hence $g^*$ is a split epimorphism onto its image.
Then the kernel of $g^*$ is a direct summand in $H^1(X)$.
The finitely generated group $H^1(X)$ contains no infinitely decreasing
chain of direct summands; hence the homomorphisms $H^1(X)\to H^1(L_i)$
have the same kernel for all sufficiently large $i$.
Then so do the homomorphisms $H^1(X)\to H^1(K_i)$.
Since $X$ is $1$-dimensional, the latter are surjective.
Hence $H^1(K_{i+1})\to H^1(K_i)$ are isomorphisms for sufficiently
large $i$.
In particular, $\dots\to H^1(K_1)\to H^1(K_0)$ satisfies the dual
Mittag-Leffler condition.

Thus by Dydak's Lemma (see \cite[Lemma 3.11]{Me}),
$\dots\to H^1(X,K_1)\to H^1(X,K_0)$ satisfies the Mittag-Leffler
condition.
Hence $\derlim H^1(X,K_i)=0$, and the assertion follows from ($*$).
\end{proof}

\section{Skliarienko's compactum}

We note that if the compactum $X$ is the limit of an inverse sequence of compacta
$X_i$, all of which embed in $Y$, then $X$ quasi-embeds in $Y$ (for it follows
from the definition of the topology of the inverse limit that the maps
$X\xr{p^\infty_i}X_i\i Y$ are $\eps_i$-maps with respect to any fixed metric on $X$,
where $\eps_i\to 0$ as $i\to\infty$).
The converse implication (which we shall not need here) holds when $Y$ is
a polyhedron (a simple proof should appear in a future version of \cite{MS};
see also \cite[Theorem 1]{MaS} but beware that their ``$\eps$-maps'' are
required to be surjective).

\begin{definition}[Skliarienko's compactum]
Given a direct sequence $X_1\to X_2\to \dots$, the {\it mapping telescope}
$\Tel(X_1\to X_2\to\dots)$ is the infinite union
$MC(X_1\to X_2)\cup_{X_2} MC(X_2\to X_3)\cup_{X_3}\dots$
of the mapping cylinders (the direct limit of the finite unions).
Let $X$ be the one-point compactification of
the mapping telescope of the direct sequence
$$S^1\xr{2}S^1\xr{2}\ldots$$ of two-fold coverings.
It is easy to see that $X$ is a contractible and locally contractible
$2$-dimensional compactum, and so an AR.
It was introduced by Je.\ G. Skliarienko \cite[Example 4.6]{Sk1}.
We shall call $X$ the {\it Skliarienko compactum}.
\end{definition}

\begin{proposition} \label{thm:quasi-emb} Skliarienko's compactum quasi-embeds in
a product of two dendrites.
\end{proposition}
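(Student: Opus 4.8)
The plan is to exhibit $X$ as an inverse limit of collapsible compact $2$-polyhedra and then invoke the observation, made just before the definition above, that a compactum which is the limit of an inverse sequence of compacta all embeddable in a fixed space $Y$ automatically quasi-embeds in $Y$. Writing the telescope as $\Tel=\bigcup_{k\ge 1}MC_k$, where $MC_k=MC(S^1_k\xrightarrow{2}S^1_{k+1})$ is glued to $MC_{k+1}$ along $S^1_{k+1}$ and $X=\Tel\cup\{\infty\}$, I would let $X_i$ be the quotient of $X$ that collapses the tail $\{\infty\}\cup\bigcup_{k\ge i}MC_k$ --- that is, everything from the circle $S^1_i$ onward --- to a single point. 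Thus $X_i$ is the finite ``prefix'' telescope $MC_1\cup\dots\cup MC_{i-1}$ with its far end $S^1_i$ coned off, a compact $2$-polyhedron, and the quotient maps $X\to X_i$ and $X_{i+1}\to X_i$ form an inverse system. I expect a routine verification that the natural map $X\to\invlim X_i$ is a homeomorphism: a point of the open telescope lies below some stage and is recorded faithfully in $X_i$ from that stage on, while $\infty$ is the unique point mapping to every cone point.

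The next step is to check that each $X_i$ is collapsible. Its near end $S^1_1$ is a free subcomplex (nothing is attached below it), and a mapping cylinder of a simplicial map collapses onto its codomain, so $MC_1\searrow S^1_2$; iterating gives $X_i\searrow MC_2\cup\dots\cup C(S^1_i)\searrow\dots\searrow C(S^1_i)$, and the cone $C(S^1_i)$ finally collapses to a point. Hence $X_i\searrow pt$. By the two-dimensional case of Theorem \ref{collapsible} (equivalently, by Corollary \ref{2d-case}), each $X_i$ therefore PL embeds in a product of two trees.

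To obtain a \emph{single} ambient space I would fix a universal dendrite $D$ (for instance the Wa\.{z}ewski dendrite), into which every finite tree embeds. Then every $X_i$ embeds in the one product $D\x D$ of two dendrites, and the inverse-limit observation yields that $X=\invlim X_i$ quasi-embeds in $D\x D$, as claimed. Note that the embeddings of the various $X_i$ need not be compatible with the bonding maps: the maps used in the argument are the compositions $X\to X_i\emb D\x D$, and each is an $\eps_i$-map simply because $X\to X_i$ already has small point-inverses and the embedding is injective.

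The one genuinely non-automatic point, and the place I would be most careful, is this last step. The trees produced for $X_i$ by Theorem \ref{collapsible} necessarily grow in complexity with $i$, so two fixed finite trees cannot serve as a common target; passing to a universal dendrite is exactly what supplies the fixed product $D\x D$ that the definition of quasi-embedding requires. Everything else --- the identification $X\cong\invlim X_i$ and the collapsibility of each $X_i$ --- is straightforward once the telescope is dualized into an inverse system by coning off its tails.
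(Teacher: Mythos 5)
Your proposal is correct and follows essentially the same route as the paper: both write Skliarienko's compactum as an inverse limit of finite telescopes with their far ends coned off, observe that these quotients are collapsible compact $2$-polyhedra, embed each one in a product of two trees via Corollary \ref{2d-case}, and then pass to a single fixed product of two dendrites containing all of those trees. The one genuine divergence is the last step: where you invoke Wa\.{z}ewski's classical theorem that a universal dendrite exists (so that every finite tree embeds in it), the paper instead builds its dendrites by hand as the clusters $T=\invlim(\dots\to T_1\vee T_2\vee T_3\to T_1\vee T_2\to T_1)$ and $T'$ of the specific trees $T_i$, $T'_i$ produced by Corollary \ref{2d-case}, wedged at endpoints with bonding maps collapsing the newly attached tree. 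Your version outsources this step to a known universal object from continuum theory; the paper's version is self-contained and uses only the trees actually at hand. Both are valid, and your closing remark --- that the embeddings $X_i\hookrightarrow D\times D$ need not be compatible with the bonding maps, since only the compositions $X\to X_i\hookrightarrow D\times D$ are used --- is exactly the right point. One harmless slip in your description of $X_i$: collapsing the tail from $S^1_i$ onward turns the mapping cylinder $MC_{i-1}$ into a cone on $S^1_{i-1}$ (collapsing the target of a mapping cylinder cones off its source), so the quotient is the prefix through $MC_{i-2}$ with $S^1_{i-1}$ coned off, i.e.\ the paper's $X_{i-1}$, rather than the prefix through $MC_{i-1}$ with a cone attached along $S^1_i$; this is a mere index shift and affects nothing in the argument.
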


\begin{proof} Let us represent $X$ as an inverse limit of polyhedra.
To this end, consider the following mapping telescope of a direct sequence:
$$X_i=\Tel(S^1_1\xr{2}\ldots\xr{2}S^1_i\to pt),$$
where each $S^1_j$ stands for a copy of $S^1$.
Note that $X$ contains the cone $D^2=\Tel(S^1_i\to pt)$.
Let $f_i\colon X_{i+1}\to X_i$ be the composition of the quotient map
$X_{i+1}\to X_{i+1}/D^2$ and a homeomorphism $X_{i+1}/D^2\to X_i$
which is the identity on $\Tel(S^1_1\xr{2}\ldots\xr{2}S^1_i)$.
Then $X$ is homeomorphic to the inverse limit of
$\dots\xr{f_2}X_2\xr{f_1}X_1$.

Notice that each $X_i$ is a collapsible 2-polyhedron.
Hence by a result of Koyama, Krasinkiewicz and Spie\.z (see Corollary
\ref{2d-case}), $X_i$ embeds in a product of two trees $T_i$ and $T'_i$.
Let us consider the cluster
$T=\invlim(\dots\to T_1\vee T_2\vee T_3\to T_1\vee T_2\to T_1)$ of
the $T_i$, where the basepoint of each $T_i$ is one of its endpoints.
Let $T'$ be the analogous cluster of the trees $T'_i$.
Then $T$ and $T'$ are dendrites, $T$ contains a copy of each $T_i$,
and $T'$ contains a copy of each $T'_i$.
Thus each $X_i$ embeds in $T\x T'$.
Therefore $X$ quasi-embeds there.
\end{proof}

Let $X$ be the Skliarienko compactum and let $\infty\in X$ denote the
remainder point of the one-point compactification.
It is easy to see that $H^3(X,\,X\but\{\infty\})$ is non-zero \cite{Sk1}.
More generally, let us compute $H^{3+k}(X\x I^k,\,X\x I^k\but\{(\infty,0)\})$,
where $I=[-1,1]$.
Let $F_i$ be the union of the first $i$ mapping cylinders in the mapping telescope:
$$F_i=\Tel(S^1_1\xr{2}\ldots\xr{2} S^1_i).$$
Each $F_i$ collapses onto $S^1_i$, and these collapses identify up to homotopy
the inclusions $F_i\i F_{i+1}$ with the two-fold coverings $S^1_i\xr{2}S^1_{i+1}$.
Hence the inverse sequence $\dots\to H^1(F_2)\to H^1(F_1)$ is of the form
$\dots\xr2\Z\xr2\Z$.
Since $X$ is an AR, so is the inverse sequence $\dots\to H^2(X,F_2)\to H^2(X,F_1)$.
Let $G_i=F_i\x I^k\cup X\x (I^k\but (-\frac1i,\frac1i)^k)$.
By the K\"unneth formula (see references in the proof of Lemma \ref{lem:product}),
$H^{2+k}(X\x I^k,G_i)\simeq H^2(X,F_i)$, and the inverse sequence
$\dots\to H^{2+k}(X\x I^k,G_2)\to H^{2+k}(X\x I^k,G_1)$ is again of the form
$\dots\xr2\Z\xr2\Z$.
In particular, it does not satisfy the Mittag-Leffler condition, so by
Gray's Lemma (see \cite[Lemma 3.3]{Me}) its derived limit is nontrivial.
(In fact, it is easy to compute that it is isomorphic to $\Z_2/\Z$, where $\Z_2$
is the group of $2$-adic integers; see \cite[Example 3.2]{Me}.)
Thus by ($*$), $H^{3+k}(X\x I^k,\,X\x I^k\but\{(\infty,0)\})\ne 0$.

\begin{theorem}\label{thm:non-emb}
If $X$ is the Skliarienko's compactum, $X\x I^k$ does not embed in any product of
$2+k$ local dendrites.
\end{theorem}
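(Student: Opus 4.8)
The plan is to argue by contradiction, feeding the vanishing of local cohomology for local dendrites (Lemma \ref{lem:1-dim}) through the product formula (Lemma \ref{lem:product}) and then restricting to the embedded copy of $X\x I^k$, where local cohomology is already known to be nonzero. So suppose that $e\colon X\x I^k\emb P$ is an embedding into a product $P=D_1\x\dots\x D_{2+k}$ of local dendrites, and set $p=e(\infty,0)=(p_1,\dots,p_{2+k})$. Each $D_j$ is one-dimensional, so Lemma \ref{lem:1-dim} gives $H^2(D_j,\,D_j\but\{p_j\})=0$ for every $j$.

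First I would establish, by induction on the number of factors, that $H^{m+1}(P_m,\,P_m\but\{(p_1,\dots,p_m)\})=0$, where $P_m=D_1\x\dots\x D_m$. The base case $m=1$ is Lemma \ref{lem:1-dim}. For the inductive step, each local dendrite contains an arc, so $P_m\supset I^m$ and hence $\dim P_m\ge m$; since also $\dim P_m\le\sum_{j\le m}\dim D_j=m$, we have $\dim P_m=m$. Applying Lemma \ref{lem:product} to the two factors $P_m$ (dimension $m$, with $H^{m+1}=0$) and $D_{m+1}$ (dimension $1$, with $H^2=0$), at the point $((p_1,\dots,p_m),p_{m+1})$, yields $H^{m+2}(P_{m+1},\,P_{m+1}\but\{(p_1,\dots,p_{m+1})\})=0$. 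After $2+k$ steps this gives
$$H^{3+k}(P,\,P\but\{p\})=0.$$
Now $e(X\x I^k)\i P$ is a subcompactum containing $p$, with $\dim e(X\x I^k)=2+k$, and $\dim P=2+k$ by the same arc/product-inequality argument; so Lemma \ref{lem:incl} applies and gives $H^{3+k}\bigl(e(X\x I^k),\,e(X\x I^k)\but\{p\}\bigr)=0$. Transporting this through the homeomorphism $e$ produces $H^{3+k}\bigl(X\x I^k,\,(X\x I^k)\but\{(\infty,0)\}\bigr)=0$, contradicting the computation immediately preceding the theorem.

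Since all of the analytic and algebraic difficulty has already been absorbed into Lemmas \ref{lem:product}, \ref{lem:incl} and \ref{lem:1-dim}, together with the explicit derived-limit computation above, the present argument is essentially a matter of correct bookkeeping. The only points I would watch are that the cohomological degree stays pinned to the dimension throughout the induction (each additional one-dimensional factor raising the relevant degree by exactly one) and that $\dim P=2+k$, so that the dimension hypotheses of Lemma \ref{lem:incl} hold with equality. The genuine obstacle therefore lies entirely in those three lemmas rather than in their assembly here.
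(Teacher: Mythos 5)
Your proposal is correct and follows exactly the paper's own proof: assume an embedding, apply Lemma \ref{lem:1-dim} to each factor, Lemma \ref{lem:product} (iterated over the factors) to the product, and Lemma \ref{lem:incl} to the embedded copy, contradicting the derived-limit computation of $H^{3+k}(X\x I^k,\,X\x I^k\but\{(\infty,0)\})\ne 0$. The only difference is that you spell out the induction over factors and the dimension bookkeeping ($\dim P_m=m$ via the arcs in the local dendrites), which the paper leaves implicit; this is harmless and, if anything, a useful check that the hypotheses of the lemmas hold with the required equalities.
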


\begin{proof} Suppose $X\x I^k\i Y_1\x\dots\x Y_n$, where $Y_i$ are local dendrites.
Then $(\infty,0)\in X\x I^k$ is of the form $(y_1,\dots,y_n)$.
By Lemma \ref{lem:1-dim}, $H^2(Y_i,\,Y_i\but\{y_i\})=0$ for each $i$.
Then by Lemma \ref{lem:product}, $H^{3+k}(\prod Y_i,\,\prod Y_i\but\{(y_i)\})=0$.
Therefore by Lemma \ref{lem:incl}, $H^{3+k}(X\x I^k,\,X\x I^k\but\{(\infty,0)\})=0$.
This contradicts the above computation.
\end{proof}

\begin{theorem}[Koyama--Krasinkiewicz--Spie\.z] \label{thm:kras} If a compact $n$-dimensional ANR
embeds in a product of $n$ curves, then it embeds in a product of $n$ local
dendrites.
\end{theorem}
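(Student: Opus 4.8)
The plan is to begin from a given embedding $Z\subseteq\prod_{i=1}^n C_i$ of the compact $n$-dimensional ANR $Z$ and to replace the curves $C_i$ by local dendrites, keeping the map an embedding. First I would reduce to the case of \emph{surjective Peano factors}. Since $Z$ is a compact ANR it is locally connected, and a continuous image of a compact locally connected space is again locally connected; hence each image $A_i:=p_i(Z)\subseteq C_i$ is a locally connected one-dimensional compactum, and the corestriction $(p_1,\dots,p_n)\colon Z\to\prod_i A_i$ is still an embedding. I may thus assume $C_i=A_i$, with each $p_i$ surjective. The only obstruction to $A_i$ being a local dendrite is that it may carry infinitely many independent simple closed curves accumulating somewhere: a locally connected one-dimensional compactum is a local dendrite exactly when it is locally contractible, equivalently when it contains only finitely many simple closed curves.

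Next I would present each factor as an inverse limit of finite graphs, $A_i=\invlim_k G_i^k$, with surjective bonding maps, recording that every finite graph is a local dendrite. The induced map $\Pi^k=\prod_i\pi_i^k\colon\prod_i A_i\to\prod_i G_i^k$ sends $Z$ into a product of local dendrites, and since the fibers of $\pi_i^k$ shrink to points as $k\to\infty$, the restriction $\Pi^k|_Z$ is a $\delta_k$-map with $\delta_k\to 0$. By itself this only reproves that $Z$ quasi-embeds in products of graphs; the entire difficulty is to upgrade this to an honest embedding \emph{without increasing the number of factors}.

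The upgrade is where the ANR hypothesis must enter. Using that $Z$ is an ANR, fix a neighborhood retraction $r\colon U\to Z$ with $U$ open in $\prod_i C_i$, and let $\eps$ witness local contractibility of $Z$. The key claim I would establish is that, for $k$ large, any two distinct points of $Z$ that $\Pi^k$ fails to separate differ, in each coordinate in which they differ, only along a simple closed curve of $A_i$ that is inessential in $Z$. This is precisely the point at which one invokes the finite generation of the first cohomology of the subcompacta of a locally contractible one-dimensional space, in the same spirit as Lemma~\ref{lem:1-dim} and \cite[II.17.5 and V.12.8]{Br}. Granting the claim, these redundant loops may be cut by a quotient map $q_i\colon A_i\to D_i$ onto a local dendrite $D_i$, collapsing only the subcontinua of $A_i$ that carry the inessential loops (and retaining the loop-free, tree-like tail of $A_i$ beyond stage $K$). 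Setting $g_i=q_i\circ p_i$, the claim is exactly what guarantees that $(g_1,\dots,g_n)\colon Z\to\prod_i D_i$ remains injective, hence an embedding into a product of $n$ local dendrites.

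The main obstacle is the key claim of the previous paragraph: that the loops one is forced to collapse are redundant for separating points of $Z$, so that no two distinct points of $Z$ have all $n$ coordinates simultaneously collapsed. Local contractibility of $Z$ is indispensable here --- it is precisely what upgrades the $\delta_k$-map of the second step to a genuine embedding; dropping it leaves only a quasi-embedding, and quasi-embeddability does not in general imply embeddability (this is the phenomenon underlying Problem~\ref{prob:kks}). I would therefore expect the bulk of the work to go into making the redundancy claim precise, most plausibly by analyzing, through the retraction $r$, the restriction of the coordinate projections to the small fibers of $\Pi^k|_Z$ and showing that any pair of distinct points of $Z$ meeting a collapsed subcontinuum in one coordinate is already separated by another, in the manner of Corollary~\ref{2d-case} and the embedding results of \cite{KKS}.
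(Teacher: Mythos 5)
Your opening reductions are sound: passing to the Peano images $A_i=p_i(Z)$ (continuous images of a compact locally connected space are locally connected) and recalling Kuratowski's characterization of local dendrites as Peano curves with only finitely many simple closed curves is indeed how one must begin, and your second step correctly observes that graph approximations alone only recover quasi-embeddability. But from there on the proposal is a plan rather than a proof: the ``key claim'' on which everything rests is left unproven, as you yourself concede. For comparison, the paper's own proof is a reduction plus citation: a compact ANR is locally contractible, hence has finitely generated \v Cech cohomology \cite[II.17.7]{Br}, \cite[6.11]{Me}, and is locally connected; these are precisely the hypotheses under which the opening of the proof of Theorem 2.B.1 of \cite{KKS} converts an embedding in a product of $n$ curves into an embedding in a product of $n$ local dendrites. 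So what you call the key claim is, in substance, the content of that cited argument, and a blind proof would have to actually establish it; the distinction between quasi-embedding and embedding that it encodes is exactly the phenomenon behind Problem \ref{prob:kks}, so it cannot be waved through.

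Beyond incompleteness, the plan misidentifies where the hypotheses enter, in two ways. First, you propose to invoke ``finite generation of the first cohomology of the subcompacta of a locally contractible one-dimensional space,'' but the factors $A_i$ are only locally connected, not locally contractible --- if they were locally contractible they would already be local dendrites and there would be nothing to prove. The finite generation that genuinely powers the argument is that of the cohomology of $Z$ itself (available exactly because $Z$ is an ANR, and used in the form of finitely generated $H^n(Z)$ in the cited KKS argument); the cohomology of $Z$ never appears anywhere in your proposal. Second, your key claim conflates the fibers of the graph approximations $\Pi^k$ with simple closed curves: two points of $Z$ unseparated by $\Pi^k$ differ within small point-inverses of the bonding maps, which a priori have nothing to do with circles, whereas the obstruction to $A_i$ being a local dendrite is a null sequence of simple closed curves. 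The real question --- whether one can collapse all but finitely many of those circles and keep the resulting map injective on $Z$ --- is exactly what remains unproven, and you give no indication of how the neighborhood retraction $r\colon U\to Z$ would yield it; local contractibility of $Z$ enters the honest argument through finite generation of its cohomology, not through a pointwise separation analysis of small fibers.
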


\begin{proof} It is well-known that locally contractible compacta have finitely
generated cohomology groups (see \cite[II.17.7]{Br}, \cite[6.11]{Me}).
If a locally connected $n$-dimensional compactum $X$ with finitely generated
$H^n(X)$ embeds in a product $n$ curves, then the first several lines of the proof
of Theorem 2.B.1 in \cite{KKS} (which contain further references) produce
an embedding of $X$ in a product of $n$ local dendrites.
\end{proof}

Theorems \ref{thm:non-emb} and \ref{thm:kras} have the following

\begin{corollary} \label{cor} Skliarienko's compactum multiplied by $I^k$ does not
embed in any product of $2+k$ curves.
\end{corollary}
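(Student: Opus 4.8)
The plan is to derive the corollary purely by combining Theorems \ref{thm:non-emb} and \ref{thm:kras}, arguing by contradiction. Let $X$ denote Skliarienko's compactum and suppose, contrary to the assertion, that $X\x I^k$ embeds in a product of $2+k$ curves. The strategy is to feed this hypothetical embedding into Theorem \ref{thm:kras} in order to upgrade it to an embedding in a product of $2+k$ \emph{local dendrites}, and then to invoke Theorem \ref{thm:non-emb}, which forbids exactly such an embedding, thereby reaching a contradiction.

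Before applying Theorem \ref{thm:kras} I would first confirm that $X\x I^k$ satisfies its hypotheses, i.e.\ that it is a compact $(2+k)$-dimensional ANR (the case $n=2+k$). The space $X$ is a $2$-dimensional compact AR by construction, hence in particular a compact ANR, while the cube $I^k$ is a $k$-dimensional compact ANR. Since a finite product of compact ANRs is again a compact ANR, the product $X\x I^k$ is a compact ANR. For the dimension count I would use that $I^k$ is a polyhedron, so the product dimension formula $\dim(X\x I^k)=\dim X+\dim I^k=2+k$ applies; thus $X\x I^k$ is exactly $(2+k)$-dimensional, matching the number of curve factors.

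With these two facts in hand the argument closes immediately: Theorem \ref{thm:kras}, applied with $n=2+k$, converts the assumed embedding of $X\x I^k$ in a product of $2+k$ curves into an embedding of $X\x I^k$ in a product of $2+k$ local dendrites, which directly contradicts Theorem \ref{thm:non-emb}. I do not expect any serious obstacle here, since the substantive work has already been carried out in the two cited theorems; the only point requiring (routine) care is the verification just described, that $X\x I^k$ is a compact ANR whose dimension equals the number of factors. Both parts of that verification are standard --- the ANR property from the stability of ANRs under finite products, and the dimension equality from the product formula for a factor that is a polyhedron --- so the corollary follows at once.
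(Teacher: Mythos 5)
Your proof is correct and is exactly the paper's argument: the paper derives Corollary \ref{cor} precisely by combining Theorem \ref{thm:kras} (applied with $n=2+k$ to the compact $(2+k)$-dimensional ANR $X\x I^k$) with Theorem \ref{thm:non-emb}. Your added verification that $X\x I^k$ is a compact ANR of dimension $2+k$ is routine but correctly fills in the hypotheses the paper leaves implicit.
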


Corollary \ref{cor} combines with Proposition \ref{thm:quasi-emb} to imply
Theorem \ref{thm:main}.

\begin{remark} If $\dots\to G_1\to G_0$ is an inverse sequence of countable groups,
let $\derlim_{fg} G_i$ be the direct limit $\dirlim L_\alpha$ of the derived limits
$L_\alpha=\derlim H_{\alpha i}$ over all inverse sequences $\dots\to H_{\alpha
1}\to H_{\alpha 0}$ of finitely generated subgroups $H_{\alpha_i}\subset G_i$,
where the bonding maps are the restrictions of those in $\dots\to G_1\to G_0$.
Some results about $\derlim_{fg}$ will appear in a future paper by the first author.
By using the functor $\derlim_{fg}$ in place of $\derlim$, it should be possible
to refine the proof of Theorem \ref{thm:non-emb} so as to obtain a purely algebraic
proof of Corollary \ref{cor}, without using Theorem \ref{thm:kras}.
\end{remark}

\begin{remark} The same arguments (only using the general case of Theorem
\ref{collapsible} rather than the easier $2$-dimensional case) show that
the $n$-dimensional Skliarienko compactum (similarly defined with $S^{n-1}$
in place of $S^1$) quasi-embeds in a product of $n$ dendrites, but does not
embed in a product of $n$ curves.
\end{remark}

\section{Co-local contractibility}\label{co-local}

Let us call a compactum $X$ {\it co-locally contractible} at $x\in X$ if
every neighborhood $U$ of $x$ contains a neighborhood $V$ of $x$ such that
the inclusion $X\but \{x\}\i X$ is homotopic to a map $X\but\{x\}\to X\but V\i X$
by a homotopy keeping $X\but U$ fixed.
(Equivalently, every neighborhood $U$ of $x$ contains a neighborhood $V$
of $x$ such that for every neighborhood $W$ of $x$ contained in $V$,
the inclusion $X\but W\i X$ is homotopic to a map $X\but W\to X\but V$
by a homotopy keeping $X\but U$ fixed.)
We call $X$ {\it co-locally contractible} if it is co-locally contractible
at every point.
(Compare Borsuk's idea of colocalization \cite[\S IX.16]{Bo1} and colocal
connectedness of Krasinkiewicz and Minc \cite{KM}.)

\begin{remark}
A slightly stronger property than co-local contractibility, obtained by
replacing the inclusion $X\but \{x\}\i X$ with the identity map of $X\but \{x\}$,
is known as {\it reverse} (or {\it backward}) {\it tameness} of $X\but \{x\}$
(see \cite{Qu}, \cite{HR}).
Dually, $X\but \{x\}$ is called {\it forward tame} if there exists a closed
neighborhood $U$ of $x$ such that for every neighborhood $V$ of $x$,
the inclusion $V\but \{x\}\i X\but \{x\}$ is properly homotopic to a map
$V\but \{x\}\to U\but \{x\}\i X\but \{x\}$ (see \cite{Qu}, \cite{HR}).
It is not hard to see (even if appears surprising) that forward
tameness of $X\but\{x\}$ implies local contractibility of $X$ at $x$.
To see that the converse implication fails, let $P$ be the suspension of
a non-contractible acyclic polyhedron and let its basepoint $b$ be one of
the two suspension points; or alternatively let $P$ be the dunce hat and
$b$ its unique $0$-cell.
Then the cluster $C=\invlim(\dots\to P\vee P\vee P\to P\vee P\to P)$ of
copies of $P$ is an AR, yet it follows from Dydak--Segal--Spie\. z \cite{DSS}
that $C\but\{b\}$ is not forward tame.
\end{remark}

\begin{proposition} If an $n$-dimensional compactum $X$ is co-locally
contractible at $x$, then $H^{n+1}(X,\,X\but\{x\})=0$.
\end{proposition}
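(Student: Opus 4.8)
The plan is to deduce the vanishing from the isomorphism ($*$) by showing that the inverse sequence occurring there is Mittag--Leffler. I would fix neighborhoods $U_1\i U_2\i\dots$ of $x$ as in Proposition \ref{prop:u0infty}, so that ($*$) reduces the claim to $\derlim H^n(X,\,X\but U_k)=0$; here the bonding maps are the restriction homomorphisms $b_{jk}\:H^n(X,\,X\but U_j)\to H^n(X,\,X\but U_k)$ for $j\ge k$, induced by the inclusion of pairs $(X,\,X\but U_k)\emb(X,\,X\but U_j)$. Since a Mittag--Leffler tower has vanishing derived limit, it then suffices to show that for each $k$ the images $\im b_{jk}$ stabilize as $j\to\infty$.

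The heart of the argument turns the geometric hypothesis into an algebraic factorization. Fixing $k$, I would apply co-local contractibility with $U=U_k$ to obtain a neighborhood $V\i U_k$ of $x$ and a homotopy from the inclusion $i\:X\but\{x\}\emb X$ to a map $\bar r$ with $\bar r(X\but\{x\})\i X\but V$, keeping $X\but U_k$ fixed. As $V\i U_k$ gives $X\but U_k\i X\but V$ and the homotopy fixes $X\but U_k$ pointwise, it is a homotopy of pairs $(X\but\{x\},\,X\but U_k)\to(X,\,X\but U_k)$, whence $i^*=\bar r^*$ on $H^n(X,\,X\but U_k)$. Since $\bar r$ factors through $X\but V$, the restriction $i^*\:H^n(X,\,X\but U_k)\to H^n(X\but\{x\},\,X\but U_k)$ factors through $H^n(X\but V,\,X\but U_k)$, so $\ker\big(H^n(X,\,X\but U_k)\to H^n(X\but V,\,X\but U_k)\big)\i\ker i^*$. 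The exact sequence of the triple $(X,\,X\but V,\,X\but U_k)$ identifies the left-hand kernel with $\im\big(H^n(X,\,X\but V)\to H^n(X,\,X\but U_k)\big)$, while the triple $(X,\,X\but\{x\},\,X\but U_k)$ gives $\ker i^*=\im\pi$, where $\pi\:H^n(X,\,X\but\{x\})\to H^n(X,\,X\but U_k)$.

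To finish I would pick $j_0$ with $U_{j_0}\i V$. For every $j\ge j_0$ the inclusions $X\but U_k\i X\but V\i X\but U_j$ let $b_{jk}$ factor through $H^n(X,\,X\but V)$, so $\im b_{jk}\i\ker i^*=\im\pi$; conversely $\pi$ factors through $H^n(X,\,X\but U_j)$, so $\im\pi\i\im b_{jk}$. Thus $\im b_{jk}=\im\pi$ for all $j\ge j_0$, a subgroup independent of $j$, so the images stabilize and the tower is Mittag--Leffler; hence $\derlim H^n(X,\,X\but U_k)=0$ and $H^{n+1}(X,\,X\but\{x\})=0$ by ($*$). I expect the one genuinely delicate point to be the verification that the co-local contractibility homotopy is a homotopy of pairs relative to $X\but U_k$ --- this is exactly what converts the geometric hypothesis into the equality $i^*=\bar r^*$ --- together with the correct bookkeeping of the two triple exact sequences; the Mittag--Leffler bound itself is then purely formal.
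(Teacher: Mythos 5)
Your proof is correct, but it takes a genuinely different route from the paper's. First, three small points to tidy up: the tower of neighborhoods should be decreasing, $U_1\supset U_2\supset\dots$ (your opening sentence has the inclusions reversed); the $U_k$ should be taken \emph{open}, since ($*$) requires $H^{n+1}(X,\,X\but U_k)=0$, which uses that $X\but U_k$ is closed in the $n$-dimensional $X$ (the paper makes the same choice of open $U_k$ in the proof of Lemma \ref{lem:incl}); and the existence of $j_0$ with $U_{j_0}\i V$ is a compactness argument using $\Int U_k\supset\Cl U_{k+1}$ and $\bigcap U_k=\{x\}$. With these adjustments every step checks out: the homotopy furnished by co-local contractibility fixes $X\but U_k$ pointwise, hence is a homotopy of pairs $(X\but\{x\},\,X\but U_k)\to(X,\,X\but U_k)$, so $i^*=\bar r^*$ and $i^*$ factors through $H^n(X\but V,\,X\but U_k)$; the two triple sequences identify $\ker i^*$ with $\im\pi$ and $\ker\bigl(H^n(X,\,X\but U_k)\to H^n(X\but V,\,X\but U_k)\bigr)$ with $\im\bigl(H^n(X,\,X\but V)\to H^n(X,\,X\but U_k)\bigr)$; and the sandwich $\im\pi\i\im b_{jk}\i\im\pi$ for $j\ge j_0$ gives the Mittag--Leffler condition, hence $\derlim H^n(X,\,X\but U_k)=0$ and the conclusion via ($*$).

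The paper's own proof never passes through ($*$), derived limits or the Mittag--Leffler condition: it is a direct diagram chase with a single auxiliary neighborhood $V$, showing that the coboundary $H^n(X\but\{x\},\,X\but V)\to H^{n+1}(X,\,X\but\{x\})$ of the triple $(X,\,X\but\{x\},\,X\but V)$ vanishes, so that $H^{n+1}(X,\,X\but\{x\})$ injects into $H^{n+1}(X,\,X\but V)$, which is zero by $n$-dimensionality. So both arguments consume the same dimension-theoretic input (in yours it is hidden inside the derivation of ($*$)), but the paper's chase is shorter and limit-free, whereas yours needs the whole tower of neighborhoods. What your route buys is transparency at exactly the point you flagged as delicate: in your version the hypothesis enters only once, as a homotopy of \emph{pairs} rel $X\but U_k$, and the relative-group bookkeeping makes that use explicit and checkable. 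This care is not pedantry: the rel condition is essential, since the compactum $Y$ of Remark \ref{rem:menger} is contractible, so the inclusion $Y\but\{y\}\i Y$ is homotopic to a map into the complement of a neighborhood of $y$ with no rel condition, and yet $H^2(Y,\,Y\but\{y\})\ne 0$; thus any correct proof must exploit the fixing of $X\but U$. The paper's write-up, which phrases the key step in terms of absolute groups (``split injection on the image of $H^i(X)$'') and then passes to relative ones, is much terser about where this happens; your argument can be read as a self-contained, fully verified (if longer) alternative.
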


\begin{proof} This is a straightforward diagram chasing.
The hypothesis implies that, with $x$, $U$ and $V$ as above and for each $i$,
the restriction map $H^i(X\but \{x\})\to H^i(X\but V)$ is a split injection
on the image of $H^i(X)$.
Hence the image of the forgetful map
$f\:H^i(X\but\{x\},\,X\but V)\to H^i(X\but \{x\})$ lies in the image of $H^i(X)$.
The latter equals the kernel of the coboundary map
$\delta\:H^i(X\but x)\to H^{i+1}(X,\,X\but\{x\})$, hence $\delta f=0$.
Since this $\delta f\:H^i(X\but \{x\},\,X\but V)\to H^{i+1}(X,\,X\but \{x\})$ is
also the coboundary map, the restriction
$H^{i+1}(X,\,X\but \{x\})\to H^{i+1}(X,\,X\but V)$ must be an injection.
Finally, since $X$ is $n$-dimensional and without loss of generality $V$ is
open, $H^{n+1}(X,\,X\but V)=0$.
Thus $H^{n+1}(X,\,X\but \{x\})=0$.
\end{proof}

{\bf Acknowledgments.}
The authors are grateful to Professors Krasinkiewicz and Spie\.z for posing
the problem and for stimulating discussions, and the referee for helpful
remarks.
The first author also thanks J. Dydak, O. Frolkina and Je.\ V. Shchepin for
relevant conversations.

\end{document}